\g@addto@macro{\endabstract}{\@setabstract}
\newcommand{\authorfootnotes}{\renewcommand\thefootnote{\@fnsymbol\c@footnote}}%
\numberwithin{equation}{section}
\newtheorem{thm}{Theorem}[section]
\newtheorem{cor}[thm]{Corollary}
\newtheorem{lem}[thm]{Lemma}
\newtheorem{pro}[thm]{Proposition}
\newtheorem{ass}[thm]{Assumption}
\newcommand{\zb}{\boldsymbol z}
\newcommand{\ub}{\boldsymbol u}
\newcommand{\rb}{\boldsymbol r}
\newcommand{\cb}{\boldsymbol c}
\newcommand{\yb}{{\boldsymbol y}}
\newcommand{\eb}{\boldsymbol e}
\newcommand{\pb}{\boldsymbol p}
\newcommand{\xb}{\boldsymbol x}
\newcommand{\vb}{\boldsymbol v}
\newcommand{\wb}{\boldsymbol w}
\newcommand{\thetab}{\boldsymbol \theta}
\newcommand{\alphab}{\boldsymbol \alpha}
\newcommand{\betab}{\boldsymbol \beta}
\newcommand{\sigmab}{\boldsymbol \sigma}
\newcommand{\qb}{\boldsymbol q}
\newcommand{\mub}{\boldsymbol \mu}
\begin{document}

\title{}
\begin{center}
\LARGE Large deviations for two scale chemical kinetic processes \par \bigskip

\normalsize
\authorfootnotes
Tiejun Li \footnote{email: {\it tieli@pku.edu.cn}}
\and
Feng Lin\footnote{email: {\it math\_linfeng@pku.edu.cn}}
\par \bigskip

LMAM and School of Mathematical Sciences,  Peking University,\\
Beijing 100871, China \par
\end{center}

\date{}
\maketitle

\section*{Abstract}

We formulate the large deviations for a class of two scale chemical kinetic processes motivated from biological applications. The result is successfully applied to treat a genetic switching model with positive feedbacks. The corresponding Hamiltonian is convex with respect to the momentum variable as a by-product of the large deviation theory. This property ensures its superiority in the rare event simulations compared with the result obtained by formal WKB asymptotics. The result is of general interest to understand the large deviations for multiscale problems.

\section{Introduction}

We will investigate the large deviations for a class of two scale chemical kinetic processes with the slow variable $\zb_n \in \mathbb{N}^{d}/n$ satisfying
\begin{equation}\label{eq:CKS1}
 \zb_n(t)=\zb_n(0)+\sum_{i=1}^{S}\frac{1}{n}P_{i}\left(n\int_0^t\lambda_i(\zb_n(s),\xi_n(s))ds\right)\ub_i
 \end{equation}
subject to some fixed initial state $\zb_{n}(0)=\zb^0$, where $\{P_{i}(t)\}_{i=1,\ldots,S}$ are independent uni-rate Poisson processes, $\lambda_i\in \mathbb{R}^{+}$ is called the propensity function which characterizes the reaction rate of the $i$th reaction and $\ub_i\in \mathbb{Z}^{d}$ is called the state change vector.
The number $n\in \mathbb{N}$ corresponds to the system volume, thus $\zb_{n}$ has the meaning of concentration (number of molecules per volume) for the considered  kinetic system.  The fast variable $\xi_n\in \mathbb{Z}_{D}:=\{1,2,\cdots,D\}$ is a simple jump process with the time dependent rate $nq_{ij}(\zb_n(t))$ from state $i$ to $j$ at time $t$. With this mathematical setup, the processes $\zb_n(t)$ and $\xi_n(t)$ are fully coupled to each other and the infinitesimal generator $\mathcal {L}_n$ of this system has the form
\begin{align}\label{eq:fullsystem}
\mathcal {L}_nh(\zb,i)=&n\sum_{l=1}^S \lambda_l(\zb,i)[h(\zb+\ub_l/n,i)-h(\zb,i)]\nonumber\\
&+n\sum_{\substack{j=1 \\ j\ne i}} ^D  q_{ij}(z)[h(\zb,j)-h(\zb,i)],
\end{align}
where $\zb\in \mathbb{N}^d/n$ , $i\in \mathbb{Z}_{D}$ and $h$ is any compactly supported smooth function of $\zb$ for each $i$. For more about the notations and the backgrounds on the chemical kinetic processes, the readers may be referred to \cite{Kurtz1986,Gillespie1977}.

The above problem is motivated by our recent rare event study in the biological applications \cite{Assaf2011,LiLin2016,Lv2014}.  In a cell, the reactions underlying gene expression usually involve
low copy number of molecules, such as DNA, mRNAs and transcription factors, so the stochasticity in gene regulation process is inevitable even under constant environmental conditions \cite{Elowitz2002}. When the number of the molecules for all species goes to infinity and the law of mass action holds for the propensity functions, one gets the well-known large volume limit or Kurtz's limit, which gives the deterministic reaction rate equations for the concentration of the species \cite{Kurtz1972}. The convergence result can be further refined to the large deviation type \cite{Shwartz1995}. Recently, the following typical biological model with positive feedbacks is utilized to investigate the robustness of the genetic switching system \cite{Assaf2011,Chen2015,LiLin2016,Lv2014}.
$$
\begin{array}{ccccc}
{\rm DNA_{in}} &&\emptyset&&\emptyset\\
${\tiny G($Z_2$)}$  \upharpoonleft \downharpoonright ${\tiny $F(Z_2)$}$ &&\uparrow\gamma&&\uparrow 1\\
{\rm DNA_{act}} & \autorightarrow{${\tiny a}$}{} & ${{\rm mRNA} ($Z_1$)}$ & \autorightarrow{${\tiny\gamma b}$}{} & ${{\rm Protein} ($Z_2$)}$
\end{array}
$$
 This problem is a special case of our formulation shown at the beginning of this paper for $d=2,\ D=2$ and $S=4$. Denote $n$ the system size and $\zb=(z_1,z_2)=(Z_1,Z_2)/n$ the slow variables after taking large volume scaling, where $Z_1$ and $Z_2$ are the number of mRNA and protein molecules, respectively. Since there is only one molecule of DNA at active (${\rm DNA_{act}}$) or inactive state (${\rm DNA_{in}}$), for better use of notation, we take the fast variable $\xi\in \{0,1\}$ instead of $\{1,2\}$ to represent that the DNA is at inactive ($\xi=0$) or active state ($\xi=1$), respectively. By taking into account the scaling of parameters
$$a \sim nb^{-1},\quad \ F(Z_2),G(Z_2)\sim O(n)\quad \mbox{if}\quad Z_1,Z_2\sim O(n),$$
we further assume
\begin{equation}
F(Z_2)=n f(z_2), \quad G(Z_2)=n g(z_2).
\end{equation}
This assumption holds when we consider a Hill-function type jump rates with Hill coefficient 2 and large volume scaling for equilibrium constants \cite{Assaf2011}. Thus, we have the rescaled jump rates for DNA
\begin{equation}\label{rate of xi}
q_{01}(\zb)= f(z_2),\quad q_{10}(\zb)= g(z_2),
\end{equation}
and the following list of reactions associated with slow variables as shown in Table \ref{rates table}.
\begin{table}[h]\caption{Reaction schemes and parameters}\label{rates table}
\begin{tabular}{l | l | l }\toprule
Reaction scheme & Propensity function & State change vector\\ \midrule
$\text{DNA}_{\text{act}}\rightarrow \text{mRNA}$ & $\lambda_{1}(z_1,z_2,\xi) =b^{-1}\xi $ & $\ub_{1} = (1,0)$ \\
$\text{mRNA}\rightarrow \emptyset$ & $\lambda_{2}(z_1,z_2,\xi) =\gamma z_1$ & $\ub_{2} = (-1,0)$ \\
$\text{mRNA}\rightarrow \text{Protein}$ & $\lambda_{3}(z_1,z_2,\xi) =\gamma b z_1$ & $\ub_{3} = (0,1)$ \\
$\text{Protein}\rightarrow \emptyset$ & $\lambda_{4}(z_1,z_2,\xi) =z_2$ & $\ub_{4} = (0,-1)$\\ \midrule
\end{tabular}
\end{table}

The infinitesimal generator of this process has the form
\begin{align}\label{eq:Ln for model}
\mathcal {L}_nh(\zb,i)=& n\sum_{l=1}^4 \lambda_l(\zb,i)\Big(h(\zb+n^{-1}\ub_l,i)-h(\zb,i)\Big)\nonumber\\
&+n \bigg(f(z_2)[h(\zb,1)-h(\zb,0)]+g(z_2) [h(\zb,0)-h(\zb,1)]\bigg)
\end{align}
for $i=0,1$.
One can obtain a mean field ODE system as
\begin{equation}\label{eq: odes}
\frac{d z_1}{dt}=\frac{b^{-1} f(z_2)}{ f(z_2)+ g(z_2)}-\gamma z_1,\quad \frac{d z_2}{dt}=\gamma b z_1- z_2
\end{equation}
when $n$ goes to infinity through the perturbation analysis for the infinitesimal generator \cite{Kurtz1973,Lv2014,Papa1977}.
 With suitable choice of the functions $F(Z_2)$ and $G(Z_2)$, the final mean field ODEs have two stable stationary points and there are noise
 induced transitions between these two states when $n$ is finite.  To understand the robustness of the genetic switching, the biophysicists employed the WKB ansatz to the stationary distribution \cite{Assaf2011}
\begin{equation}\label{eq:WKB}
P(Z_1,Z_2)\sim \exp[-nS(z_1,z_2)]
\end{equation}
and obtained a steady state Hamilton-Jacobi equation $H(z_1,z_2,\nabla S) =0$. Mathematically the function $S$ resembles the role of the quasi-potential of the stochastic dynamical system \cite{Freidlin1998, Lv2015, Zhou2016} but it is not sure whether it is the case in the current stage. Another related physics approach to study a similar switching system is to utilize the spin-boson path integral formalism in quantum field theory and then take the semiclassical approximation and  adiabatic limit \cite{LiLin2016,Wang2013}. Both approaches are difficult to be rationalized in mathematical sense. So how to formulate this problem in a mathematically rigorous way?  To resolve this issue, we have to answer the following two fundamental questions.
\begin{enumerate}
\item Question 1. What is the large deviation principle (LDP) associated with the system \eqref{eq:fullsystem}? Presumably, we can obtain the Lagrangian from the large deviation analysis, then get the Hamiltonian $H$ through the Legendre-Fenchel transform.
\item Question 2. What is the relation between the rigorously obtained Hamiltonian $H$ in the above question and the Hamiltonian obtained via WKB asymptotics?
\end{enumerate}

The aim of this paper is to make an exploration on these two questions. To do this, we first note that the large volume limit no longer holds in the current example. Although the mRNA and protein copy numbers scale as $V$, we have only one DNA, which switches between the active and inactive states. This fact excludes the direct applicability of the LDP results in \cite{Shwartz1995}.  However, the fast switching between the two states of the DNA ensures the averaging technique still valid as shown in \eqref{eq: odes} by taking the quasi-equilibrium limit \cite{E2007,Kurtz2013,Lv2014}. We will show that  the LDP analysis is also feasible by incorporating the Donsker-Varadhan type large deviations. Indeed, similar situation has been nicely discussed by Liptser \cite{Liptser1996} and Veretennikov \cite{Veretennikov2000, Veretennikov1999} for two-scale diffusions like
\begin{eqnarray}
 &&dX_n(t)=A(X_n(t),\xi_n(t))dt+\frac{1}{\sqrt{n}}B(X_n(t),\xi_n(t))dW_t,\label{eq: lipste1}\\
 &&d\xi_n(t)=nb(\xi_n(t))dt+\sqrt{n}\sigma(\xi_n(t))dV_t.\label{eq:lipster2}
\end{eqnarray}
The main idea of this paper is to generalize the result in \cite{Liptser1996} to our two-scale chemical kinetic processes. As we will see, although the framework is similar, we have to deal with the
technicalities brought by the jump processes and the full coupling between the fast and slow variables ($\xi_n$ is independent of $X_n$ in \eqref{eq:lipster2}).

To state the main results of this paper, let us introduce the occupation measure $\nu_n$ on $([0,T]\times \mathbb{Z}_{D}, \mathcal{B}([0,T])\otimes \mathcal{B}(\mathbb{Z}_{D})$ corresponding to $\xi_n$
\begin{equation}\label{eq:OccMeasDef}
\nu_n(\Delta \times \Gamma)=\int_0^T{\bf 1} (t\in \Delta,\xi_n \in \Gamma)dt ,\ \ \Delta \in \mathscr{B}([0,T]), \Gamma \in \mathscr{B}(\mathbb{Z}_{D}),
\end{equation}
where $T$ is any fixed positive real number. Denote $\mathbb{D}^d[0,T]$ the space of $d$-dimensional vector functions on $[0,T]$ whose components are right continuous with left-hand limits, $\mathbb{M}_L[0,T]$ of finite measures $\nu = \nu(dt,i)$ on $([0,T]\times \mathbb{Z}_{D}, \mathscr{B}([0,T])\otimes \mathscr{B}(\mathbb{Z}_{D}))$ which are absolutely continuous with respect to $dt$ and have Lebesgue time marginals, i.e. we have $\nu(dt, i)= n_{\nu}(t,i)dt$, $n_{\nu}(t,i)\ge 0$ and $\sum_{i=1}^Dn_{\nu}(t,i)=1$. The $\nu_{n}$ we considered always belongs to $\mathbb{M}_L[0,T]$. Take the metric $\rho^{(2)}$  on $\mathbb{M}_L[0,T]$ as the L\'evy-Prohorov metric and $\rho^{(1)}$  on $\mathbb{D}^{d}[0,T]$ as the Skorohod metric defined as
\begin{equation}
\rho^{(1)}(\rb,\tilde\rb) = \inf_{\lambda\in \mathcal{F}}\Big\{\|\lambda\|^{\circ} \vee \sup_{t\in[0,T]}\|\rb(t)-\tilde\rb(\lambda(t))\|\Big\},
\end{equation}
where $\|\cdot\|$ is the Euclidean norm in the corresponding space, $\mathcal{F}$ is the collection of strictly increasing functions $\lambda(t)$ such that $\lambda(0)=0$ and $\lambda(T)=T$, and
$$
\|\lambda\|^\circ := \sup_{0\le s<t\le T}\left|\log \frac{\lambda(t)-\lambda(s)}{t-s}\right|.
$$
$\mathbb{D}^d[0,T]$ and $\mathbb{M}_L[0,T]$ are complete and separable spaces with $\rho^{(1)}$ and $\rho^{(2)}$, respectively \cite{BillingsleyBook}. Our task is to establish the LDP for the pair $(\zb_n,\nu_n)$ in metric space $(\mathbb D^d[0,T]\times\mathbb{M}_L[0,T],\rho^{(1)}\times\rho^{(2)})$. 

This paper is organized as follows. In Section 2, we present the main large deviation theorem and give the rate functional of the whole system. By using the contraction principle and the Legendre-Fenchel transform we get the Hamiltonian related to the slow variable $\zb_n$.
As a concrete application, we then study the genetic switching model and compare the difference between the rigorously obtained Hamiltonian and that obtained by WKB ansatz.
In Sections 3 and 4, we give the proof of the main theorem. Due to the technicalities to handle the non-negativity constraint for $\rb$,  we decompose the proof procedure into two steps. In Section 3, we prove the LDT theorem by relaxing the bounded domain condition to the whole space case. The upper bound estimate is standard in some sense. However, the proof of the lower bound is technical because of the full coupling between the fast and slow variables. The resolution is based on the approximation and change-of-measure approach. The central idea is to make a piecewise linear approximation to any given path and occupation measure $(\rb,\nu)$ by $(\yb,\pi)$ at first, and then construct suitable new processes $(\bar \zb_n,\bar \nu_n)$ such that $\mathbb{P}-\lim_{n \to \infty}\rho^{(1)}(\bar \zb_n,\yb)=0$ and $\mathbb{P}-\lim_{n \to \infty}\rho^{(2)}(\bar \nu_n,\pi)=0$. This turns out to be technical and one key part of the whole paper. In Section 4, we strengthen the result to the half space case. Some details are left in the Appendix.

This paper should be considered as the companion  of \cite{LiLin2016, Lv2014} for studying the rare events in genetic switching system, and it is of general interest to understand the large deviations for multiscale problems \cite{E2005,E2007}.

\section{Main result and its application}\setcounter{equation}{0}

\subsection{Main theorem}
We need the following technical assumptions for our main result.
\begin{ass}\label{ass:second} Let $W := \overline {\mathbb R_{+}^d}$. Assume the following regularity conditions for the propensity functions and jump rates hold.
\begin{enumerate}
\item  (a)~For each $i\in\{1,2,\ldots,S\}, j\in \mathbb{Z}_{D}$ and all $\zb, \xb\in W$, the Lipschitz condition holds
\begin{equation}\label{eq:LambdaLip}
|\lambda_i(\zb,j)-\lambda_i(\xb,j)|\le L \|\zb-\xb\|.
\end{equation}

\noindent(b)~For each $i\in\{1,2,\ldots,S\}, j\in \mathbb{Z}_{D}$ and all $\zb \in W^\circ$, $\lambda_i(\zb,j)>0$.

\noindent(c)~For each $\xb \in \partial W$ and $\yb \in \mathcal{C}\{\ub_j| \lambda_j(\xb)>0\}$, we have $\xb+s\yb \in W$ for some $s \in (0,\infty)$ , where $\mathcal{C}\{\ub_j\}$ is the positive cone spanned by the vectors $\{\ub_j\}$ defined as
\begin{equation}
\mathcal{C}\{\ub_j\} := \big\{\vb| \text{there exist}~\alpha_j \ge 0~\text{such that}~\vb= {\sum}_j \alpha_j \ub_j\big\}.
\end{equation}

\item  For each $i,j\in \mathbb{Z}_{D}$, $\log q_{ij}(\zb)$ are bounded and Lipschitz continuous with respect to $\zb\in W $.

\end{enumerate}
\end{ass}
These assumptions hold in our application example in Section \ref{sec:application}.

\begin{thm}\label{second result}
Under the Assumption \ref{ass:second}, the family $(\zb_n,\nu_n)$ defined by \eqref{eq:CKS1} and \eqref{eq:OccMeasDef}
obeys the LDP in $(\mathbb{D}^d[0,T]\times\mathbb{M}_L[0,T],\rho^{(1)}\times\rho^{(2)})$ with a good rate functional $I(\rb,\nu) = I_{s}(\rb,\nu)+I_{f}(\rb,\nu)$, i.e.
\begin{itemize}
\item[(0)]$I(\rb,\nu)$ values in $[0,+\infty]$ and  its level sets are compact in $(\mathbb{D}^d[0,T]\times \mathbb{M}_L[0,T],\rho^{(1)}\times \rho^{(2)})$,
\item[(1)]for every closed set $F \in \mathbb{D}^d[0,T]\times\mathbb{M}_L[0,T]$,
\begin{equation}\label{eq:UpperBound}
\mathop{\lim\sup}_{n\to \infty}\frac{1}{n}\log\mathbb{P}((\zb_n,\nu_n)\in F)\le -\mathop{\inf}_{(\rb,\nu)\in F}I(\rb,\nu),
\end{equation}
\item[(2)]for every open set $G \in \mathbb{D}^d[0,T]\times\mathbb{M}_L[0,T]$,
\begin{equation}\label{eq:LowerBound}
\mathop{\lim\inf}_{n\to \infty}\frac{1}{n}\log\mathbb{P}((\zb_n,\nu_n)\in G)\ge -\mathop{\inf}_{(\rb,\nu)\in G}I(\rb,\nu),
\end{equation}
\end{itemize}
where the rate functional for the slow variables
\begin{equation}
I_{s}(\rb,\nu) = \left\{
\begin{array}{cl}
   \int_0^T L_{s}(\rb(t),\dot{\rb}(t),n_\nu(t,\cdot))dt, & d\rb(t)=\dot{\rb}(t)dt,   \\
   \infty,  & \text{otherwise},  \\
 \end{array}
 \right.
\end{equation}
 \begin{equation}\label{eq:LsDef}
 L_{s}(\zb,\betab,\wb) =  \mathop {\sup }\limits_{\pb \in \mathbb{R}^d}\left(\left\langle\pb,\betab\right\rangle-H_{s}(\zb,\pb,\wb)\right),
 \end{equation}
 \begin{equation}\label{eq:Hs}
H_{s}({\zb,\pb,\wb})  = \sum\limits_{i = 1}^S \sum\limits_{j = 1}^D{{\lambda _i}(\zb,j)w_j\left({e^{\left\langle {\pb,\ub_i} \right\rangle }} - 1\right)},
 \end{equation}
 and the rate functional for the fast variables
 \begin{equation}
 I_{f}(\rb,\nu)=\int_0^T S(\rb(t),n_\nu(t,\cdot))dt,
 \end{equation}
\begin{equation}\label{eq:SDef}
 S(\zb,\wb)=\mathop{\sup}\limits_{\sigmab \in \mathbb{R}^D}S(\zb,\wb,\sigmab),
\end{equation}
\begin{equation}S(\zb,\wb,\sigmab)=
-\sum_{i,j=1}^{D}w_iq_{ij}(\zb)\left(e^{\left\langle\sigmab,\eb_{ij}\right\rangle}
-1\right).
\end{equation}
\noindent Here we take the notation $\nu(dt,\cdot)=n_{\nu}(t,\cdot)dt$, thus $n_{\nu}(t,\cdot)$ is a probabilistic vector $\left(n_{\nu}(t,1),n_{\nu}(t,2),\ldots,n_{\nu}(t,D)\right)$. $\wb=(w_1,w_2,$ $\ldots,w_D)$,  and $\langle\cdot,\cdot\rangle$ is the inner product in the Euclidean space. $\eb_{ij}=\eb_{i}-\eb_{j}$ and  $\{\eb_{i}\}_{i=1}^{D}$ are canonical basis in Euclidean space $\mathbb{R}^{D}$.  We take the convention that $\rb(t)$ is absolutely continuous with respect to time when we use the notation $d\rb(t)=\dot{\rb}(t)dt$, and $S$ is a  function of $(\zb,\wb)$  (or $(\zb,\wb,\sigmab)$) when we use $S(\zb,\wb)$ (or $S(\zb,\wb,\sigmab)$) by default.
\end{thm}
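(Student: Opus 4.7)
The plan is to attack the three items in Theorem \ref{second result} in the standard Freidlin--Wentzell order, but with a Donsker--Varadhan layer for the fast chain on top. The workhorse would be the family of exponential (local) martingales associated with $\mathcal{L}_n$. For any smooth $\pb : [0,T]\to \mathbb{R}^d$ and $\sigmab : [0,T]\to \mathbb{R}^D$, a direct Dynkin computation shows that an object of the form
\begin{equation}
M_n(t) = \exp\Bigl(n\langle \pb(t),\zb_n(t)\rangle + n\,\sigma_{\xi_n(t)}(t) - n\int_0^t \bigl[H_{s}(\zb_n,\pb,\eb_{\xi_n}) + S(\zb_n,\eb_{\xi_n},\sigmab) + R_n\bigr]\,ds\Bigr)
\end{equation}
is a local martingale, where $R_n$ collects the explicit time-derivative terms of $\pb$ and $\sigmab$. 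The integrand is precisely the pair of formal Hamiltonians appearing in \eqref{eq:Hs} and \eqref{eq:SDef} evaluated at $\wb=\eb_{\xi_n}$, so this identity bridges the generator level and the Legendre duals appearing in $I$.

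\textbf{Upper bound and goodness.} For a closed $F\subset \mathbb{D}^d[0,T]\times\mathbb{M}_L[0,T]$, I would first establish exponential tightness by taking $\pb=\pm\lambda\eb_k$ in $M_n$, which controls the modulus of continuity of $\zb_n$ in the Skorohod metric $\rho^{(1)}$, while tightness of $\nu_n$ in $\rho^{(2)}$ is automatic because $\mathbb{M}_L[0,T]$ consists of measures with fixed Lebesgue time marginal. Then a Chernoff-type argument, tilting by piecewise constant $(\pb,\sigmab)$, applying Markov's inequality to $M_n(t)$, discretizing in time and passing to the supremum over $(\pb,\sigmab)$, would recover the Legendre duals $L_s$ and $S$ in the claimed form. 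Goodness of $I$ (compact level sets) will follow from the superlinear growth of $L_s$ in $\betab$ together with Prohorov-type compactness of $\mathbb{M}_L[0,T]$ and the absolute continuity constraint $d\rb=\dot\rb\,dt$.

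\textbf{Lower bound via approximation and change of measure.} For $(\rb,\nu)$ in an open $G$ with $I(\rb,\nu)<\infty$, I would approximate $(\rb,\nu)$ in $\rho^{(1)}\times\rho^{(2)}$ by a pair $(\yb,\pi)$ on a uniform time mesh for which $\dot\yb$ and $n_\pi$ are piecewise constant and $I(\yb,\pi)\le I(\rb,\nu)+\varepsilon$. On each subinterval I would solve the finite-dimensional dual problems \eqref{eq:LsDef} and \eqref{eq:SDef} to obtain optimizers $\pb^{*}_k$ and $\sigmab^{*}_k$, and then use $M_n^{-1}$ built from these piecewise-constant controls as the Radon--Nikodym density defining a new measure $\tilde{\mathbb{P}}_n$. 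Under $\tilde{\mathbb{P}}_n$ the propensities become $\tilde\lambda_i = \lambda_i e^{\langle \pb^{*}_k,\ub_i\rangle}$ and the fast rates become $\tilde q_{ij} = q_{ij} e^{\sigma^{*}_{k,j}-\sigma^{*}_{k,i}}$. A fluid/averaging argument should then show that the corresponding $(\bar\zb_n,\bar\nu_n)\to(\yb,\pi)$ in $\tilde{\mathbb{P}}_n$-probability, after which the change-of-measure identity
\begin{equation}
\mathbb{P}\bigl((\zb_n,\nu_n)\in U\bigr) = \tilde{\mathbb{E}}_n\bigl[M_n^{-1}\mathbf{1}_U\bigr] \ge \exp\bigl(-nI(\yb,\pi) - o(n)\bigr)
\end{equation}
for a small neighborhood $U\subset G$ of $(\yb,\pi)$ closes the argument.

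\textbf{Main obstacle.} The central difficulty is the full coupling $q_{ij}=q_{ij}(\zb_n)$: a clean Donsker--Varadhan LDP is only directly available for a Markov chain with \emph{frozen} rates, so the naive tilted process does not immediately produce the target stationary distribution $\wb$. The resolution, matching the authors' outline, is to freeze the rates at $\yb(t_k)$ on each subinterval, prove the fast-variable LDP with frozen $\zb$, and then pay a uniform approximation cost controlled by the Lipschitz continuity of $\log q_{ij}$ from Assumption \ref{ass:second}(2). A secondary obstacle is the half-space constraint $\zb\in\overline{\mathbb{R}_+^d}$: since some $\lambda_i$ vanish on $\partial W$, optimally tilted paths may try to leave the admissible set. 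This is exactly why the proof is split into a whole-space LDP first (Section 3), followed in Section 4 by an extension that invokes Assumption \ref{ass:second}(1)(c) to retract any candidate path into the interior with only an $\varepsilon$-loss in $I$.
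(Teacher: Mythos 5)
Your overall architecture is the paper's: exponential martingales plus a Chernoff/discretization argument and exponential tightness for the upper bound, compactness of $\mathbb{M}_L[0,T]$ for the occupation measures, a lower bound by piecewise approximation of $(\rb,\nu)$ followed by a change of measure and an ergodic-averaging argument for the controlled fast chain, and the two-stage structure (whole space under Assumption \ref{ass:main}, i.e.\ Theorem \ref{Main result}, then Theorem \ref{second result} by pushing paths off the boundary). Even your tilted slow rates coincide with the paper's at the optimizer: the paper's control is $n\mu_i(t)\lambda_i(\bar\zb_n,\bar\xi_n)/\lambda^{\pi}_{i}(\yb(t))$, which is exactly $\lambda_i e^{\langle\pb^*,\ub_i\rangle}$ when $\mu_i=\lambda^{\pi}_{i}e^{\langle\pb^*,\ub_i\rangle}$. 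The real difference is that the paper parametrizes the control by target fluxes $\mub^m\in\mathcal K_{\betab_m}$ and pairs $(\eta^{mk},\psi^{mk})\in\mathcal S$ through the entropy representations of $L_s$ and $S$ (Theorem 5.26 and Lemma 8.61 of \cite{Shwartz1995}), not by dual optimizers of \eqref{eq:LsDef} and \eqref{eq:SDef}. This matters: those suprema need not be attained when $\dot\yb(t)$ lies on the boundary of the cone $\mathcal{C}$ or when $n_{\pi}(t,\cdot)$ has zero components, so your $\pb^*_k,\sigmab^*_k$ may fail to exist, and one needs precisely the approximations supplied by Lemmas \ref{up 1} and \ref{up 2} (strictly positive $\psi$ close to $n_\nu$, fluxes instead of momenta) before the change of measure can be written down.

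Three concrete fixes are needed. (i) Your displayed martingale is wrong as written: the exponent must carry $\sigma_{\xi_n(t)}(t)$, not $n\,\sigma_{\xi_n(t)}(t)$; with the factor $n$ the compensator involves $e^{n(\sigma_j-\sigma_i)}-1$ and does not reduce to $S(\zb_n,\eb_{\xi_n},\sigmab)$, so the claimed bridge to \eqref{eq:SDef} fails. The Donsker--Varadhan term appears at speed $n$ because the jump rates, not the boundary term, carry the factor $n$, and the $O(1)$ boundary term $\langle\xi_n(t)-\xi_n(0),\sigmab\rangle$ is discarded (cf.\ Lemma \ref{local lemma} and Corollary \ref{cor of local lemma}); your later formula $\tilde q_{ij}=q_{ij}e^{\sigma^*_j-\sigma^*_i}$ is consistent with the corrected version, so this reads as a slip, but it must be repaired. (ii) The passage from Assumption \ref{ass:main} to Assumption \ref{ass:second} is not only a lower-bound issue: under Assumption \ref{ass:second} the $\lambda_i$ have linear growth and may vanish, so the whole-space upper-bound machinery does not apply verbatim; the paper first reduces to bounded sets by a moment/Doob estimate and then rechecks Lemma \ref{phi and  phi^delta} and Lemma \ref{step function} using \cite{Shwartz2005}, whereas your upper-bound sketch tacitly assumes bounded rates throughout. (iii) Near $\partial W$ the paper does more than ``retract with an $\epsilon$-loss'': it shifts to $\rb_\delta=\rb+\delta\vb$, proves $\limsup_{\delta\to0^+}I(\rb_\delta,\nu)\le I(\rb,\nu)$, forces the process along $\rb(0)+t\vb$ on an initial interval $[0,t_\delta]$ at exponential cost of order $t_\delta$, and only then applies the whole-space lower bound on $[t_\delta,T]$ where the shifted path is uniformly interior; some such quantitative initial-segment estimate is indispensable, since upper semicontinuity of $I$ under the shift alone does not control the probability of the process reaching the interior region where the change of measure is legitimate.
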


The proof of  Theorem \ref{second result} relies on first establishing a weaker statement based on the following stronger assumption on the whole space.
\begin{ass}\label{ass:main} Regularity for the propensity functions and jump rates.
\begin{enumerate}
\item  For each $i\in\{1,2,\ldots,S\}, j\in \mathbb{Z}_{D}$, $\log \lambda_i(\zb,j)$ is bounded and Lipschitz continuous with respect to $\zb\in \mathbb{R}^{d}$.
\item  For each $i,j\in \mathbb{Z}_{D}$, $\log q_{ij}(\zb)$ are bounded and Lipschitz continuous with respect to $\zb\in \mathbb{R}^{d}$.
\end{enumerate}
\end{ass}
\noindent This covers Assumption \ref{ass:second}. Mathematically we express (1) as
\begin{align}\label{eq:LambdaLUBound}
\frac{1}{\Lambda} \le \lambda_i(\zb,j) \le\Lambda, \quad\Lambda >1
\end{align}
for any $\zb\in \mathbb{R}^d$, $i\in\{1,2,\ldots,S\}$ and $ j\in \mathbb{Z}_{D}$. And in this stronger set-up we simply denote the positive cone generated by $\{\ub_j\}$ as
\begin{equation}\label{eq:ConeDef}
\mathcal{C} := \big\{\vb| \text{there exist}~\alpha_j \ge 0~\text{such that}~\vb= {\sum}_j \alpha_j \ub_j\big\}.
\end{equation}

\begin{thm}\label{Main result}
The large deviation result in Theorem \ref{second result} holds for $(\zb_{n},\nu_n) \in \mathbb{D}^d[0,T]\times \mathbb{M}_L[0,T]$ under the Assumption \ref{ass:main}.
\end{thm}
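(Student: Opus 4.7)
The plan is to establish Theorem~\ref{Main result} by combining Freidlin--Wentzell exponential-martingale arguments for the slow variable $\zb_n$ with Donsker--Varadhan occupation-measure estimates for the fast variable $\xi_n$, generalising the Liptser--Veretennikov two-scale strategy \cite{Liptser1996,Veretennikov2000} to the pure-jump setting. Throughout, the uniform bound \eqref{eq:LambdaLUBound} and the Lipschitz continuity of $\log\lambda_i$ and $\log q_{ij}$ provided by Assumption~\ref{ass:main} will play the role of the global regularity used in the diffusion case.

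The first step is a joint exponential tilt. For piecewise constant controls $\pb:[0,T]\to\mathbb{R}^d$ and $\sigmab:[0,T]\to\mathbb{R}^D$, set $V(\zb,i,t)=n\langle\pb(t),\zb\rangle-\sigma_i(t)$; since $V(\zb+\ub_l/n,i,t)-V(\zb,i,t)=\langle\pb(t),\ub_l\rangle$ and $V(\zb,j,t)-V(\zb,i,t)=\langle\sigmab(t),\eb_{ij}\rangle$, a direct computation on $\mathcal{L}_n$ from \eqref{eq:fullsystem} produces the unit-mean exponential martingale
\begin{equation*}
M_n(T)=\exp\Bigl\{V(\zb_n(T),\xi_n(T),T)-V(\zb^0,\xi_n(0),0)-n\!\int_0^T\!\!\bigl[H_s(\zb_n(s),\pb(s),\eb_{\xi_n(s)})-S(\zb_n(s),\eb_{\xi_n(s)},\sigmab(s))\bigr]\,ds\Bigr\}.
\end{equation*}
Rewriting the time integral via the occupation measure $\nu_n$ and localising to a small neighbourhood of an arbitrary $(\rb,\nu)$ in a closed set $F$, a Chebyshev-type bound produces on that neighbourhood an exponent converging to $n[\int_0^T\langle\pb,\dot\rb\rangle dt-\int_0^T\sum_i n_\nu(t,i)(H_s(\rb,\pb,\eb_i)-S(\rb,\eb_i,\sigmab))\,dt]$. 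Optimising pointwise over $(\pb,\sigmab)$ and invoking Legendre--Fenchel duality with \eqref{eq:LsDef} and \eqref{eq:SDef} delivers the upper bound with rate $I_s+I_f$. Exponential tightness in $(\mathbb{D}^d[0,T]\times\mathbb{M}_L[0,T],\rho^{(1)}\times\rho^{(2)})$, which also implies the goodness claim~(0), follows from the Poisson representation \eqref{eq:CKS1} together with \eqref{eq:LambdaLUBound} via the Aldous tightness criterion for jump processes, while the occupation measure is automatically tight because $\mathbb{Z}_D$ is finite.

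For the lower bound on an open set $G$ I would use a change-of-measure argument. Given $(\rb,\nu)\in G$ with $I(\rb,\nu)<\infty$, approximate $(\rb,\nu)$ by a pair $(\yb,\pi)$ piecewise linear in $\yb$ with slope $\betab_k$ and piecewise constant in $n_\pi(\cdot)=\wb_k$ on a fine partition $\{t_k\}$; lower semicontinuity gives $I(\yb,\pi)\to I(\rb,\nu)$. Select optimisers $\pb_k,\sigmab_k$ in \eqref{eq:LsDef}--\eqref{eq:SDef} at $(\yb,\betab_k,\wb_k)$ and define the tilted law by $d\tilde{\mathbb{P}}_n=M_n(T)\,d\mathbb{P}$. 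Under $\tilde{\mathbb{P}}_n$ the propensities become $\lambda_l(\zb,i)e^{\langle\pb_k,\ub_l\rangle}$ and the fast rates become $q_{ij}(\zb)e^{\langle\sigmab_k,\eb_{ij}\rangle}$ on $[t_k,t_{k+1}]$, and I must prove that $(\bar\zb_n,\bar\nu_n)\to(\yb,\pi)$ in $\tilde{\mathbb{P}}_n$-probability. Given this concentration, the identity
\begin{equation*}
\mathbb{P}((\zb_n,\nu_n)\in G)\ge\tilde{\mathbb{E}}_n\bigl[M_n(T)^{-1}\mathbf{1}_{(\bar\zb_n,\bar\nu_n)\in G}\bigr],
\end{equation*}
combined with Jensen's inequality and the convergence $\tfrac{1}{n}\log M_n(T)\to I(\yb,\pi)$ under $\tilde{\mathbb{P}}_n$, yields $\liminf_n\tfrac{1}{n}\log\mathbb{P}(G)\ge -I(\yb,\pi)$; refining the partition then gives the announced lower bound.

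The principal obstacle, and the reason the argument is substantially more delicate than in the diffusion case \eqref{eq: lipste1}--\eqref{eq:lipster2} where $\xi_n$ is autonomous, is the full coupling between $\zb_n$ and $\xi_n$: the tilted fast chain has rates $q_{ij}(\bar\zb_n(s))\,e^{\langle\sigmab_k,\eb_{ij}\rangle}$ that still fluctuate with the slow variable, so a direct Donsker--Varadhan argument is unavailable. I would resolve this by a two-scale averaging on each short subinterval: freeze the slow variable at $\bar\zb_n(t_k)$ to reduce the tilted fast chain to a fixed-rate irreducible chain whose invariant measure is precisely $\wb_k$ (realised as the normalised Perron--Frobenius eigenvector of the tilted rate matrix at $\zb_n(t_k)$), and simultaneously average the tilted slow drift $\sum_l\lambda_l(\bar\zb_n,i)e^{\langle\pb_k,\ub_l\rangle}\ub_l$ against $\wb_k$ to recover $\betab_k$; the resulting replacement errors are controlled via the Lipschitz continuity of $\log\lambda_i$ and $\log q_{ij}$. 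Closing these two coupled averagings simultaneously and uniformly on each subinterval, and then gluing the subinterval estimates into a global approximation of $(\yb,\pi)$, is the central technical content of Section~3.
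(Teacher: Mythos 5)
Your upper bound follows the paper's route in all essentials: the same unit-mean exponential martingale built from the generator \eqref{eq:fullsystem} with piecewise-constant controls, localisation to small sets, optimisation over $(\pb,\sigmab)$, Legendre duality, and exponential tightness. The paper additionally replaces $\zb_n$ by its piecewise-linear interpolant (Lemma \ref{distance between z and y}), works with the $\delta$-relaxed quantities $H_s^\delta$, $S^\delta$ and a level-set covering argument (Lemma \ref{Main lem}, Proposition \ref{main Proposition}) to pass from compact sets to arbitrary closed sets, and gets exponential tightness from the quantitative Doob estimate of Lemma \ref{lemma1}; your appeal to Aldous's criterion only yields ordinary tightness, so you would need such a quantitative bound anyway. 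These are technical rather than conceptual differences.

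The lower bound is where you genuinely diverge, and where the gap lies. You tilt by optimisers $(\pb_k,\sigmab_k)$ of \eqref{eq:LsDef} and \eqref{eq:SDef}, so the tilted fast rates $q_{ij}(\bar\zb_n(s))e^{\langle\sigmab_k,\eb_{ij}\rangle}$ remain fully coupled to the slow variable. Two problems are then left unresolved. First, the optimisers need not exist: the supremum in \eqref{eq:SDef} is not attained when $\wb_k$ has vanishing components, and that in \eqref{eq:LsDef} is not attained when $\betab_k$ lies on the boundary of the cone \eqref{eq:ConeDef}; both situations occur for pairs $(\rb,\nu)$ of finite rate, so ``select optimisers'' must be replaced by an approximate-optimiser argument. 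Second, and more seriously, your proposed two-scale averaging is circular as stated: the stationarity identity (the first-order condition of the supremum in $\sigmab$) makes $\wb_k$ invariant for the tilted matrix frozen at $\yb(t_k)$, not at $\bar\zb_n(t_k)$, so the concentration $\bar\nu_n\to\pi$ needs $\bar\zb_n\approx\yb$, while $\bar\zb_n\approx\yb$ is itself obtained by averaging the tilted slow drift against $\wb_k$, i.e. needs $\bar\nu_n\approx\pi$. You flag this as the central technical content but give no mechanism (bootstrap or stopping-time localisation, continuity of invariant measures in the rates, etc.) to break the cycle. The paper's construction is designed precisely to avoid it: Lemmas \ref{up 1} and \ref{up 2} re-express $I_f$ and $I_s$ in rate form (via Shwartz--Weiss), with the stationarity constraint $\sum_i\psi_i\sum_j\eta_{ij}\eb_{ij}=\boldsymbol 0$ built into the set $\mathcal S$ of \eqref{Sm}, and the auxiliary fast chain is then given piecewise-constant rates $n\eta^{mk}_{ij}$ that do not depend on $\bar\zb_n$ at all, while the slow chain gets rates $n\mu_i(t)\lambda_i(\bar\zb_n,\bar\xi_n)/\lambda_i^{\pi}(\yb(t))$. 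Consequently Lemma \ref{covergence of tilde v} is a statement about a decoupled time-inhomogeneous chain, Lemma \ref{covergence of tilde z} follows from it by a martingale-plus-Gronwall estimate, and Lemma \ref{section 7 of Adam's book} identifies the log-likelihood \eqref{formula} before Jensen's inequality is applied. To make your Cram\'er-type tilt rigorous you would have to supply the missing bootstrap and the boundary approximation; otherwise the paper's decoupled construction is the cleaner way to close the argument.
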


As a straightforward application of the contraction principle, we have
\begin{cor} The slow variables $\zb_n$ obeys the LDP in $(\mathbb{D}^d[0,T], \rho^{(1)})$ with the rate functional
\begin{equation}
I(\rb)=\mathop{\inf}_{\nu\in \mathbb{M}_L[0,T]}(I_{s}(\rb,\nu)+I_{f}(\rb,\nu)).
\end{equation}
\end{cor}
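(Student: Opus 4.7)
The plan is to apply the contraction principle (in the form given, e.g., in Dembo--Zeitouni, Theorem~4.2.1) to the continuous projection
\begin{equation*}
\pi : \mathbb{D}^d[0,T]\times\mathbb{M}_L[0,T] \longrightarrow \mathbb{D}^d[0,T],\qquad (\rb,\nu)\mapsto \rb.
\end{equation*}
Since $\zb_n = \pi(\zb_n,\nu_n)$, once Theorem~\ref{second result} is in hand, the LDP for $\zb_n$ should follow by pushing forward the good rate functional $I(\rb,\nu)=I_s(\rb,\nu)+I_f(\rb,\nu)$.

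First, I would check that $\pi$ is continuous with respect to the product metric $\rho^{(1)}\times\rho^{(2)}$; in fact it is $1$-Lipschitz because by definition of the product metric $\rho^{(1)}(\pi(\rb,\nu),\pi(\tilde\rb,\tilde\nu))=\rho^{(1)}(\rb,\tilde\rb)\le \rho^{(1)}(\rb,\tilde\rb)+\rho^{(2)}(\nu,\tilde\nu)$. This verifies the continuity hypothesis of the contraction principle. Theorem~\ref{second result} supplies the remaining hypotheses: $(\zb_n,\nu_n)$ satisfies the LDP in the product space with the good rate functional $I(\rb,\nu)$, meaning in particular that all its sub-level sets $\{I\le M\}$ are compact.

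The contraction principle then yields the LDP for $\zb_n=\pi(\zb_n,\nu_n)$ in $(\mathbb{D}^d[0,T],\rho^{(1)})$ with rate functional
\begin{equation*}
I(\rb) \;=\; \inf_{(\tilde\rb,\nu)\,:\,\pi(\tilde\rb,\nu)=\rb}\, I(\tilde\rb,\nu) \;=\; \inf_{\nu\in\mathbb{M}_L[0,T]}\bigl(I_s(\rb,\nu)+I_f(\rb,\nu)\bigr),
\end{equation*}
which is precisely the claimed expression. The contraction principle also automatically guarantees that $I(\rb)$ is a good rate functional: for each $M<\infty$, the level set $\{I\le M\}$ is the $\pi$-image of the compact set $\{(\tilde\rb,\nu): I(\tilde\rb,\nu)\le M\}$ under a continuous map, hence compact, and the infimum in the variational formula is attained on the effective domain.

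In short, there is no substantive obstacle here once Theorem~\ref{second result} has been proved: the only verifications are the (trivial) continuity of the marginal projection and the observation that the pushed-forward infimum inherits the good rate functional property from compactness of the level sets in the joint space. All the difficulty of the argument has been absorbed into the proof of the joint LDP for $(\zb_n,\nu_n)$, which is carried out in Sections~3 and~4.
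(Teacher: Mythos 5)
Your proposal is correct and coincides with the paper's argument: the corollary is obtained there precisely as "a straightforward application of the contraction principle" to the continuous marginal projection $(\rb,\nu)\mapsto\rb$, pushing forward the good rate functional $I_s+I_f$ from Theorem \ref{second result}. Your additional verifications (continuity of the projection, compactness of level sets, goodness of the induced rate functional) are exactly the routine checks the paper leaves implicit.
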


Define the set of probabilistic transition kernels on $\mathbb{Z}_D$ as
$\Delta_{D}=\{\wb: w_1, w_2,\cdots,w_D \ge 0, \sum_{i=1}^D w_i=1\}$
where $\wb=(w_1, w_2,\cdots,w_D)$. We also define the reduced Lagrangian as
\begin{equation}
L(\zb,\betab) = \mathop{\inf}_{\wb \in \Delta_{D}}\left\{L_{s}(\zb,\betab,\wb)+S(\zb,\wb)\right\}.
\end{equation}
For convenience, we will abuse the notation $n_{\nu}\in \mathbb{M}_L[0,T]$ and $\nu \in \mathbb{M}_L[0,T]$ in later texts.

\begin{lem}\label{comm}
For any $\rb(\cdot)$ which is absolutely continuous, we have
\begin{align}\notag
&\mathop{\inf}_{n_\nu \in \mathbb{M}_L[0,T]}\int_0^T L_{s}(\rb(t),\dot{\rb}(t),n_\nu(t,\cdot))+S(\rb(t),n_\nu(t,\cdot))dt\\\label{commutable}
&=\int_0^T L(\rb(t),\dot \rb(t))dt.
\end{align}
\end{lem}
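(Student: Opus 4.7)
\emph{Proof plan.} The identity \eqref{commutable} is a ``swap infimum and integral'' statement, and I would combine the trivial pointwise lower bound with a measurable--selection construction of a near--minimizer on the fast--variable side. For the easy direction $\ge$: any $n_\nu\in\mathbb{M}_L[0,T]$ satisfies $n_\nu(t,\cdot)\in\Delta_D$ for almost every $t$, so by the very definition of $L$,
\[ L_{s}(\rb(t),\dot\rb(t),n_\nu(t,\cdot))+S(\rb(t),n_\nu(t,\cdot))\ge L(\rb(t),\dot\rb(t)) \quad\text{a.e. } t. \]
Integrating in $t$ and then taking the infimum over $n_\nu$ yields the $\ge$ half.

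For the reverse inequality $\le$, the plan is to exhibit a distinguished $n_\nu^{*}\in\mathbb{M}_L[0,T]$ whose integrand equals $L(\rb(t),\dot\rb(t))$ pointwise. The crucial structural observation is that $H_{s}(\zb,\pb,\wb)$ from \eqref{eq:Hs} is affine in $\wb$, so the definition \eqref{eq:LsDef} presents $L_{s}(\zb,\betab,\cdot)$ as the supremum of a family of affine functions of $\wb$, hence convex and lower semicontinuous on the compact simplex $\Delta_D$; the same reasoning applied to $S(\zb,\wb,\sigmab)$, which is affine in $\wb$ for each fixed $\sigmab$, shows that $S(\zb,\cdot)$ is convex and LSC. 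Compactness of $\Delta_D$ then guarantees that the infimum defining $L(\zb,\betab)$ is attained for every $(\zb,\betab)$, and the Lipschitz/boundedness assumptions on $\lambda_i$ and $q_{ij}$ from Assumption \ref{ass:second} upgrade these pointwise-in-$\wb$ LSC statements to joint lower semicontinuity of $L_{s}(\zb,\betab,\wb)+S(\zb,\wb)$ in the triple $(\zb,\betab,\wb)$. Absolute continuity of $\rb$ gives Borel measurability of $t\mapsto(\rb(t),\dot\rb(t))$, so
\[ \Phi(t,\wb):=L_{s}(\rb(t),\dot\rb(t),\wb)+S(\rb(t),\wb) \]
is a Carath\'eodory-type integrand: measurable in $t$ and LSC in $\wb$ on the compact metric space $\Delta_D$. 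The Kuratowski--Ryll-Nardzewski measurable selection theorem then supplies a Borel measurable $\wb^{*}:[0,T]\to\Delta_D$ with $\wb^{*}(t)\in\arg\min_{\wb\in\Delta_D}\Phi(t,\wb)$. Taking $n_\nu^{*}(t,\cdot):=\wb^{*}(t)$ produces a valid element of $\mathbb{M}_L[0,T]$, and by construction $\Phi(t,\wb^{*}(t))=L(\rb(t),\dot\rb(t))$ a.e., delivering $\le$ upon integration.

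\emph{Main obstacle.} The only non-routine step is the measurable selection; concretely, one must verify joint lower semicontinuity of $L_{s}+S$ and that the argmin correspondence has closed nonempty values so that a Borel selector exists. If a fully hand--crafted construction is preferred to an abstract theorem, an equivalent $\varepsilon$-net substitute works: pick a finite net $\{\wb_1,\ldots,\wb_N\}\subset\Delta_D$, partition $[0,T]$ into Borel sets $A_k=\{t:\wb_k\text{ is an }\varepsilon\text{-minimizer of }\Phi(t,\cdot)\}$ (measurable because $\Phi(\cdot,\wb_k)$ is measurable for each fixed $\wb_k$), take $n_\nu^{(\varepsilon)}(t,\cdot)=\sum_k\wb_k\mathbf{1}_{A_k}(t)$, and send $\varepsilon\to 0$; LSC combined with attainment of the infimum ensures the resulting integrals converge to $\int_0^T L(\rb(t),\dot\rb(t))\,dt$ in the limit.
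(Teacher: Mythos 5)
Your proposal is correct in substance, but it reaches \eqref{commutable} by a different route than the paper. The easy direction $\ge$ is identical. For the converse, you establish exact attainment of $\inf_{\wb\in\Delta_D}\bigl(L_s(\zb,\betab,\wb)+S(\zb,\wb)\bigr)$ (compactness of $\Delta_D$ plus lower semicontinuity in $\wb$, both functions being suprema of affine functions of $\wb$) and then invoke the Kuratowski--Ryll-Nardzewski theorem to select a measurable exact minimizer $\wb^*(t)$, which also yields measurability of $t\mapsto L(\rb(t),\dot\rb(t))$ as a by-product. The paper avoids any abstract selection theorem: it fixes a countable dense subset $\{\wb^k\}$ of $\Delta_D$, uses convexity (hence continuity of $L_s+S$ on $\Delta_D^\circ$ and on the relative interiors of the faces) to write $L(\rb(t),\dot\rb(t))=\inf_k\bigl(L_s(\rb(t),\dot\rb(t),\wb^k)+S(\rb(t),\wb^k)\bigr)$ — which settles measurability of the right-hand integrand first — and then builds an explicit integer-valued measurable index $J(t)=\inf\{k: \wb^k\ \text{is an}\ \epsilon/T\text{-minimizer at }t\}$, defining $\widehat\nu(dt,i)=w^{J(t)}_i dt$ and concluding up to an $\epsilon$ that is then removed. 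So the paper works with $\epsilon$-near-minimizers and elementary measurability of the sets $A_k$, while you work with exact minimizers at the cost of verifying joint measurability (normal-integrand type hypotheses, completeness of the underlying $\sigma$-algebra) for the selection theorem; both are legitimate, yours being heavier machinery, the paper's being more self-contained.

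One caution about your ``hand-crafted'' fallback: with a \emph{finite} $\varepsilon$-net the sets $A_k$ need not cover $[0,T]$, since the modulus of continuity of $\wb\mapsto L_s(\rb(t),\dot\rb(t),\wb)$ is not uniform in $t$ (it degrades as $\|\dot\rb(t)\|$ grows), and mere lower semicontinuity does not force a net point near the argmin to be an $\varepsilon$-minimizer. The fix is exactly the paper's device: take a \emph{countable dense} subset of $\Delta_D$ (so that, by the continuity of the convex integrand where it is finite, some $\wb^k$ is an $\epsilon/T$-minimizer for every $t$) and select the index measurably via $J(t)$; with that modification your second route coincides with the paper's proof.
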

\begin{proof} First let us show the measurability of the integrand on the right hand side of Eq. \eqref{commutable}.
By Lemma \ref{convex}, $L_s(\zb,\betab,\wb)+S(\zb,\wb)$ is convex in $\wb$. So $L_s(\zb,\betab,\wb)+S(\zb,\wb)$ is continuous with respect to $\wb$ in the set $\Delta_D^\circ \subset \mathbb{R}^D$ and the interior of the low dimensional boundaries of $\Delta_D$.
Choosing a countable dense subset $\big\{\wb^k\big\}_{k=1}^\infty$ in $\Delta_D$, we have
\begin{align}\label{wk}
L(\rb(t),\dot \rb(t))=\mathop{\inf}_{k\ge 1}\left\{L_{s}(\rb(t),\dot \rb(t),\wb^k)+S(\rb(t),\wb^k)\right\}
\end{align}
for every $\rb$ by the continuity condition. The measurability is a standard result with this formulation.

It is straightforward to have that
\begin{align*}
&\mathop{\inf}_{n_\nu\in \mathbb{M}_L[0,T]}\int_0^T L_{s}(\rb(t),\dot{\rb}(t),n_\nu(t,\cdot))+S(\rb(t),n_\nu(t,\cdot))dt\\
&\ge\int_0^T L(\rb(t),\dot \rb(t))dt.
\end{align*}
Now let us show the converse part. For any given $\epsilon>0$, define the sets
\begin{align*}
A_k=\bigg\{&t\in[0,T]: L(\rb(t),\dot \rb(t))-\\
&\Big(L_{s}(\rb(t),\dot \rb(t),\wb^k)+S(\rb(t),\wb^k)\Big)\ge-\epsilon/T\bigg\}
\end{align*}
for $k\ge 1$.  We have that $A_k$ are measurable sets since $L_{s}(\rb(t),\dot \rb(t),\wb^k)+S(\rb(t),\wb^k)$ and $L(\rb(t),\dot \rb(t))$ are both measurable functions of $t$. Define the measurable functions
$$F_k(t) = \left\{
\begin{array}{cl}
   k, & t\in A_k,  \\
   +\infty,  & \text{otherwise} \\
 \end{array}
 \right.$$
for every $k\ge 1$ and
\begin{equation}
J(t)=\mathop{\inf}_{k\ge 1} F_k(t).
\end{equation}
It is not difficult to find that $J(t)<+\infty$ for any $t$,  $J(t)$ is measurable and takes values in positive integers.
With these definitions we have
\begin{align}\label{eq:lm-approx}
L(\rb(t),\dot \rb(t))\ge L_{s}(\rb(t),\dot \rb(t),\wb^{J(t)})+S(\rb(t),\wb^{J(t)})-\epsilon/T.
\end{align}
With $\wb^{J(t)}:=\big\{w^{J(t)}_1,w^{J(t)}_2,\ldots,w^{J(t)}_D\big\}$, define the occupation measure $\widehat\nu$
$$ \widehat\nu (dt,i)= w^{J(t)}_idt, \quad i\in \{1,2, \cdots, D\} . $$
Then $\widehat\nu \in \mathbb{M}_L[0,T]$, $n_{\widehat\nu}(t,i)= w^{J(t)}_i$ and
\begin{align*}
&\int_0^T L(\rb(t),\dot \rb(t))dt\\
\ge& \int_0^T L_{s}(\rb(t),\dot \rb(t),n_{\widehat\nu}(t,\cdot))+S(\rb(t),n_{\widehat\nu}(t,\cdot))dt-\epsilon\\
\ge& \mathop{\inf}_{n_\nu\in \mathbb{M}_L[0,T]}\int_0^T L_{s}(\rb(t),\dot{\rb}(t),n_\nu(t,\cdot))+S(\rb(t),n_\nu(t,\cdot))dt-\epsilon
\end{align*}
The proof is completed.
\end{proof}

By Lemma \ref{comm}, we have
\begin{align}
 I(\rb)&=\mathop{\inf}_{\nu\in \mathbb{M}_L[0,T]}(I_{s}(\rb,\nu)+I_{f}(\rb,\nu))\nonumber\\
&=\mathop{\inf}_{n_\nu\in \mathbb{M}_L[0,T]}\int_0^T L_{s}(\rb(t),\dot{\rb}(t),n_\nu(t,\cdot))+S(\rb(t),n_\nu(t,\cdot))dt\nonumber\\
 &=\int_0^T L(\rb(t),\dot \rb(t))dt.
\end{align}

\begin{lem}
$L(\rb,\betab)$ is convex in $\betab$ .
\end{lem}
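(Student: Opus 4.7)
My plan is to exhibit $L(\zb,\betab)$ as a supremum of affine functions of $\betab$, from which convexity is immediate. Using the definition of $L(\zb,\betab)$ together with the variational formula \eqref{eq:LsDef} for $L_s$, I would first write
\begin{equation*}
L(\zb,\betab) = \inf_{\wb\in\Delta_D}\sup_{\pb\in\mathbb{R}^d}\Phi(\wb,\pb),\qquad \Phi(\wb,\pb) := \langle\pb,\betab\rangle - H_s(\zb,\pb,\wb) + S(\zb,\wb),
\end{equation*}
and then apply Sion's minimax theorem to interchange the inner $\sup$ and the outer $\inf$, producing
\begin{equation*}
L(\zb,\betab) = \sup_{\pb\in\mathbb{R}^d}\bigl\{\langle\pb,\betab\rangle - \tilde H(\zb,\pb)\bigr\},\qquad \tilde H(\zb,\pb) := \sup_{\wb\in\Delta_D}\bigl[H_s(\zb,\pb,\wb) - S(\zb,\wb)\bigr].
\end{equation*}
This displays $L(\zb,\cdot)$ as a pointwise supremum of affine functions of $\betab$, from which convexity follows immediately.

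The substantive step is verifying the hypotheses of Sion's theorem for $\Phi$ on the product $\Delta_D \times \mathbb{R}^d$. In the $\pb$ variable, $H_s(\zb,\pb,\wb) = \sum_{i,j}\lambda_i(\zb,j)w_j(e^{\langle\pb,\ub_i\rangle}-1)$ is a nonnegative combination of exponentials, hence convex and continuous in $\pb$, so $\Phi(\wb,\cdot)$ is concave and upper semicontinuous on $\mathbb{R}^d$. In the $\wb$ variable, $H_s$ is linear, while by its very definition \eqref{eq:SDef} $S(\zb,\wb)$ is a supremum over $\sigmab$ of functions that are linear in $\wb$, hence convex and lower semicontinuous; therefore $\Phi(\cdot,\pb)$ is convex and lower semicontinuous on the compact convex simplex $\Delta_D$. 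These are exactly the ingredients Sion's theorem demands to justify the interchange.

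The delicate point I expect is the possible failure of continuity of $S(\zb,\wb)$ at the boundary of $\Delta_D$ (where some components of $\wb$ vanish); fortunately Sion's theorem only requires lower semicontinuity of the minimand, and this comes for free from the supremum-of-linear representation. Once the minimax identity is in hand, no further work is needed: convexity of $L(\zb,\betab)$ in $\betab$ is inherited from its representation as a Legendre-Fenchel transform of $\tilde H(\zb,\cdot)$.
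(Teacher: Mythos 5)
Your proposal is correct and follows essentially the same route as the paper: write $L(\zb,\betab)=\inf_{\wb\in\Delta_D}\sup_{\pb}\bigl(\langle\pb,\betab\rangle-H_s(\zb,\pb,\wb)+S(\zb,\wb)\bigr)$, exchange the $\inf$ and $\sup$ by a minimax theorem ($\Delta_D$ compact convex, convex/lsc in $\wb$, concave in $\pb$), and conclude convexity because the result is a pointwise supremum of affine functions of $\betab$. The only difference is cosmetic: the paper invokes Rockafellar's Corollary 37.3.2 (stated as its Lemma on min--max, which nominally asks for joint continuity), whereas you invoke Sion's theorem with only lower semicontinuity of $S(\zb,\cdot)$, which indeed sidesteps any worry about the behavior of $S$ at $\partial\Delta_D$.
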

\begin{proof}
By Lemma \ref{Min-Max},
\begin{align*}
 L(\zb,\betab)&= \mathop{\inf}_{\wb \in \Delta_{D}}\left\{L_{s}(\zb,\betab,\wb)+S(\zb,\wb)\right\}\\
 &=\mathop{\inf}_{\wb \in \Delta_{D}}\mathop{\sup}_{\pb\in \mathbb{R}^d}(\langle\pb,\betab\rangle-H_{s}(\zb,\pb,\wb)+S(\zb,\wb))\\
 &=\mathop{\sup}_{\pb\in \mathbb{R}^d}\mathop{\inf}_{\wb \in \Delta_{D}}(\langle\pb,\betab\rangle-H_{s}(\zb,\pb,\wb)+S(\zb,\wb)).
\end{align*}
It is easy to see that $\mathop{\inf}_{\wb \in \Delta_{D}}(\langle\pb,\betab\rangle-H_{s}(\zb,\pb,\wb)+S(\zb,\wb))$ is linear in $\betab$, thus $L(\rb,\betab)$ is convex in $\betab$ according to Lemma \ref{convex}.
\end{proof}

It is well-known that the Lagrangian  $L_{s}$ does not have a closed form for the standard chemical reaction kinetic system, instead it is more convenient to investigate its dual Hamiltonian $H_{s}$ by  Legendre-Fenchel transform. The explicit form of the Hamiltonian is important for the numerics to study the rare events in systems biology \cite{Eric2008}. With similar idea, we have
\begin{align}\notag
H(\zb,\pb)&=\mathop{\sup}_{\betab\in \mathbb{R}^d}(\langle \pb,\betab\rangle-L(\zb,\betab))\\ \notag
&=\mathop{\sup}_{\betab\in \mathbb{R}^d}\left(\langle \pb,\betab\rangle-\mathop{\inf}_{\wb \in \Delta_{D}}\left\{L_{s}(\zb,\betab,\wb)+S(\zb,\wb)\right\}\right)\\ \notag
&=\mathop{\sup}_{\betab\in \mathbb{R}^d}\mathop{\sup}_{\wb \in \Delta_{D}}\left(\langle \pb,\betab\rangle-L_{s}(\zb,\betab,\wb)-S(\zb,\wb)\right)\\ \notag
&=\mathop{\sup}_{\wb \in \Delta_{D}}\mathop{\sup}_{\betab\in \mathbb{R}^d}\left(\langle \pb,\betab\rangle-L_{s}(\zb,\betab,\wb)-S(\zb,\wb)\right)\\
&=\mathop{\sup}_{\wb \in \Delta_{D}}(H_{s}(\zb,\pb,\wb)-S(\zb,\wb)). \label{eq:Hamiltonian}
\end{align}
A consequence about $H$ from its definition is that $H$ is convex with respect to $\pb$ from the convexity of $L$ and the Legendre-Fenchel transform \cite{Ellis1985}. Furthermore if the matrix $Q=(q_{ij})_{D\times D}$ is symmetrizable, $S(\zb,\wb)$ has an explicit expression
\cite{Chen1990}
\begin{equation}\label{Chen's}
S(\zb,\wb)=\frac{1}{2}\sum_i\sum_{j\ne i}\left[\sqrt{w_i q_{ij}(\zb)}-\sqrt{w_j q_{ji}(\zb)}\right]^2.
\end{equation}

\subsection{Application to the genetic switching model}\label{sec:application}

The formula \eqref{eq:Hamiltonian} has a nice application in the genetic switching model introduced before. In this model, we have $d=2,D=2$ and $S=4$. By parameters shown in \eqref{rate of xi} and
Table \ref{rates table}, we have
\begin{equation}
H_{s}(\zb,\pb,\wb)=b^{-1}w_1(e^{p_1}-1)+A(z_1,z_2,p_1,p_2),
\end{equation}
where $\zb=(z_1,z_2), \pb=(p_1,p_2), \wb=(w_0,w_1)$ (here we utilize the notation $\wb=(w_0,w_1)$ instead of $\wb=(w_1,w_2)$ as mentioned in the introduction since there is only one molecule of DNA) and
$A(z_1,z_2,p_1,p_2)=\gamma z_1(e^{-p_1}-1)+\gamma bz_1(e^{p_2}-1)+z_2(e^{-p_2}-1)$.
We  also have
\begin{eqnarray*}
S(\zb,\wb)=\left(\sqrt{w_0 f(z_2)}-\sqrt{w_1 g(z_2)}\right)^2.
\end{eqnarray*}
Applying \eqref{eq:Hamiltonian} with the constraints $w_0+w_1= 1$ and $w_0,w_1\ge 0$, we obtain the final Hamiltonian
\begin{equation}\label{eq: final H}
H(\zb,\pb)=b^{-1}s(e^{p_1}-1)-\left(\sqrt{(1-s)f(z_2)}-\sqrt{s g(z_2)}\right)^2+A(z_1,z_2,p_1,p_2),
\end{equation}
where $$s=\frac{1}{2}+\frac{s_1}{2\sqrt{s_1^2+4}}, \quad s_1=\frac{b^{-1}(e^{p_1}-1)+f(z_2)-g(z_2)}{\sqrt{f(z_2)g(z_2)}}.$$

It is instructive to compare this Hamiltonian with that obtained via WKB asymptotics. In \cite{Assaf2011}, another form of the Hamiltonian for this system is given via WKB asymptotics:
\begin{equation}\label{eq:HamiltonianWKB}
\tilde{H}(\zb,\pb)=A+g(z_2)^{-1}[A+b^{-1}(e^{p_1}-1)][f(z_2)-A],
\end{equation}
where $A=A(z_1,z_2,p_1,p_2)$. The relation between the Hamiltonian $\tilde{H}$ and $H$ is not clear so far. But one crucial difference is that $H$ is convex with respect to the momentum variable $\pb$ from the form  \eqref{eq:Hamiltonian}, while $\tilde H$ is not. It turns out this property is crucial for the numerical computations, especially for computing the transition path in geometric minimum action method (gMAM) \cite{Eric2008}. It is also interesting to observe that the quasi-potential $S(z_1,z_2)$ obtained from
$$H(\zb,\nabla S)=0\quad \text{ or }\quad \tilde H(\zb,\nabla S)=0$$
is the same even $H$ and $\tilde H$ are so different \cite{Lv2014}. It can be also verified that $H$ is not the convex hull of $\tilde{H}$ with respect to $\pb$. From the Hamilton-Jacobin theory, one may speculate that these two Hamiltonians are connected through some canonical transformation. But  it is only a plausible answer which is difficult to be verified even for this concrete example.

As the large deviation results give the sharpest characterization of the considered two-scale chemical kinetic system, we can obtain the deterministic mean field ODEs and the chemical Langevin approximation for the system based on the large deviations \cite{Dembo1993}, which corresponds to the law of large numbers (LLN) and the central limit theorem (CLT) for the process. Taking advantage of \eqref{eq: final H}, we get
\begin{equation}
\frac{\partial H}{\partial p_1} \Big|_{\pb = \boldsymbol{0}} = \frac{b^{ - 1}f(z_2)}
{f(z_2) + g(z_2)} - \gamma z_1,\qquad
\frac{\partial  H}{\partial p_2} \Big|_{\pb = \boldsymbol{0}} = \gamma bz_1 - z_2.
\end{equation}
The mean field ODEs defined by
\begin{equation}
\frac{dz_1}{dt}=\frac{\partial H}{\partial p_1} \Big|_{\pb = \boldsymbol{0}}\quad \mbox{and}\quad
\frac{dz_2}{dt}=\frac{\partial  H}{\partial p_2} \Big|_{\pb = \boldsymbol{0}}
\end{equation}
are exactly \eqref{eq: odes}.

Furthermore, we have
\begin{eqnarray*}
&&\frac{\partial ^2 H}{\partial p_1^2} \Big|_{\pb = \boldsymbol{0}} = \frac{{{b^{ - 1}}f(z_2)}}
{{f(z_2) + g(z_2)}} + \frac{{2{b^{ - 2}}f(z_2)g(z_2)}}
{{{{(f(z_2) + g(z_2))}^3}}} + \gamma z_1, \\
&&\frac{\partial ^2 H}{\partial p_2^2} \Big|_{\pb = \boldsymbol{0}} = \gamma bz_1 + z_2.
\end{eqnarray*}
This naturally leads to the following chemical Langevin approximation
 \begin{align}
\frac{dz_1}{dt}&=\left[\frac{b^{ - 1}f}
{f + g} - \gamma z_1\right]dt+\frac{1}{\sqrt{n}}\left[\sqrt{\frac{b^{-1}f}
{f + g} + \frac{2b^{-2}fg}{(f + g)^3}}dB^{1}_{t}-\sqrt{ \gamma z_1}dB^2_t\right],\notag\\
\frac{dz_2}{dt}&=\left[\gamma bz_1 - z_2\right]dt+\frac{1}{\sqrt{n}}\left[\sqrt{\gamma bz_1}dB^3_t-\sqrt{z_2}dB^4_t\right],\label{eq:langevin}
\end{align}
where $f, g$ are abbreviations of functions $f(z_2)$ and $g(z_2)$, and  $B^i_t$ $(i=1,$\ldots$,4)$ are independent standard Brownian motions. It is instructive to compare \eqref{eq:langevin} with a granted formulation by naively transplanting the Langevin approximation from the simple large volume limit \cite{Gillespie2000}, where the equation for $z_1$ reads
 \begin{align}
\frac{dz_1}{dt}&=\left[\frac{b^{ - 1}f}
{f + g} - \gamma z_1\right]dt+\frac{1}{\sqrt{n}}\left[\sqrt{\frac{b^{-1}f}
{f + g}}dB^{1}_{t}-\sqrt{ \gamma z_1}dB^2_t\right], \label{eq: usual langevin}
\end{align}
and the equation for $z_2$ is the same. It is remarkable that the Eq. \eqref{eq:langevin} has an additional term related to the noise $dB^{1}_{t}$. This additional fluctuation is induced by the fast switching of DNA states. Similar situation will also occur when we derive the chemical Langevin equations for enzymatic reactions, whereas  we should take the fluctuation effect of the fast switching into consideration if the considered scaling is in our regime. However, this point does not seem to be paid much attention in previous research. Similar situation is further discussed in \cite{LiLin2016}.

\subsection{A useful property of the Hamiltonian $H$}

The Hamiltonian $H(\zb,\pb)$ has some nice properties which can be utilized to simplify the computations in many cases. Assuming that $Q=(q_{ij})_{D\times D}$ is symmetrizable,
 according to \eqref{eq:Hamiltonian}, we have
$$ H(\zb,\pb)=\mathop{\sup}_{\wb \in \Delta_{D}} h(\zb,\pb,\wb),$$
where \begin{align*}
h(\zb,\pb,\wb)=& H_{s}(\zb,\pb,\wb) - S(\zb,\wb)\\
=&\sum\limits_{i = 1}^d  \sum\limits_{j = 1}^D{\lambda _i(\zb,j)w_j({e^{\left\langle {\pb ,{\ub_i}} \right\rangle }} - 1)}\\
&-\frac{1}{2}\sum_i\sum_{j\ne i}\left[\sqrt{w_i q_{ij}(\zb)}-\sqrt{w_j q_{ji}(\zb)}\right]^2.
\end{align*}
We will show that the supremum of $h$ in $\Delta_D$ can be only taken in the interior $\Delta_D^{\circ}$ of $\Delta_D$. To do this, we first note that $h$ is continuous in $\Delta_D$ and differentiable in $\Delta_D^{\circ}$. For any $\wb_b \in \partial(\Delta_D) $, define $\vb =\cb_0-\wb_b$ where $\cb_{0}=(1,1,\ldots,1)/D$ is the center of $\Delta_{D}$. It is easy to check that
\begin{equation}
\mathop{\lim}\limits_{t\to 0^+}\frac1t\Big(h(\zb,\pb,\wb_b+t\vb)-h(\zb,\pb,\wb_b)\Big) =  +\infty.
\end{equation}
This means that the supremum of $h$ can not be taken in $\partial(\Delta_D)$. Furthermore, since $h$ is strictly concave  in $\wb$, there exists only one point $\wb^*$ in $\Delta_D^{\circ}$, such that
$$\wb^*=\mathop{\arg\sup}_{\wb \in \Delta_D} h(\zb,\pb,\wb).$$
An important consequence of this fact is that we can get the derivative
\begin{align*}
\frac{\partial H(\zb,\pb)}{\partial \pb}&=\frac{d h(\zb,\pb,\wb^*(\pb))}{d\pb}\\
&=\frac{\partial  h(\zb,\pb,\wb^*)}{\partial \pb}+\frac{\partial  h}{\partial \wb}\Big |_{\wb=\wb^*}\frac{\partial \wb^*(\pb)}{\partial \pb}\\
&=\frac{\partial  H_{s}(\zb,\pb,\wb^*)}{\partial \pb}.
\end{align*}
This is very useful to simplify the derivations when utilizing the gMAM algorithm \cite{Eric2008} to explore the transition paths.

\section{Proof of Theorem \ref{Main result}}

We will mainly follow the framework in \cite{Liptser1996,Shwartz1995} to make the proof.  First we prove the upper bound and then the lower bound.
\subsection{Upper Bound}\setcounter{equation}{0}
The proof of upper bound \eqref{eq:UpperBound} is standard in some sense. It is difficult to estimate the probability of $(\zb_n,\nu_n) \in F$ directly. We proceed with the following steps.
Firstly, we approximate $\zb_n$ by $\tilde{\zb}_n$, where $\tilde{\zb}_n$ is an absolutely continuous path.
Secondly, for a given compact set, we can get an upper bound for $(\tilde{\zb}_n,\nu_n)$. Thirdly, we prove that after excluding a set of exponentially small probability, $\tilde{\zb}_n$ and $\nu_n$ stay in compact sets, which means that $\tilde{\zb}_n$ and $\nu_n$ are exponentially tight sequence. And finally, we get the desired result by combing the previous steps with further estimates.

 Before proceeding to the proof, let us denote $\mathbb C^d[0,T]$ the collection of all continuous functions of $t\in[0,T]$ with values in $\mathbb R^d$. Define the  sup-norm for any $\rb,\tilde\rb \in \mathbb C^d[0,T]$
$$\rho_c^{(1)}(\rb, \tilde \rb):=\mathop{\sup}\limits_{0\le t \le T}\|\rb(t)-\tilde \rb(t)\|.$$
We have that $(\mathbb C^d[0,T],\rho_c^{(1)})$ is a Polish space.  The metric $\rho_c^{(1)}$ is stronger than $\rho^{(1)}$ on $\mathbb D^d[0,T]$. As a  consequence, every open set in  $(\mathbb D^d[0,T],\rho^{(1)})$ is also open in $(\mathbb D^d[0,T],\rho_c^{(1)})$. And if $\mathcal K$ is compact in $(\mathbb C^d[0,T],\rho_c^{(1)})$, it is also compact in $(\mathbb D^d[0,T],\rho_c^{(1)})$ and in $(\mathbb D^d[0,T],\rho^{(1)})$.

To construct the approximation of $\zb_n$, we subdivide the time interval $[0,T]$ into $n$ pieces with nodes $t_j^n = Tj/n$, $j=0,1,\cdots,n$. Define the piecewise linear interpolation $\tilde{\zb}_n(t)$ of $\zb_n(t)$ as
\begin{equation}
\tilde{\zb}_n(t)= (1-\gamma _{j}(t))\zb_n(t_{j}^n)+\gamma _{j}(t)\zb_n(t_{j+1}^n),\quad t \in [t_j^n,t_{j+1}^n],
\end{equation}
where $\gamma _{j}(t)=(t-t_{j})n/T\in [0,1]$.

We have the important characterization that $\tilde{\zb}_n$ is exponentially equivalent to $\zb_n$.
\begin{lem}\label{distance between z and y}
For each $\delta >0$,
\begin{equation}
\mathop {\lim \sup }\limits_{n \to \infty } \frac{1}{n}\log\mathbb{P}(\rho^{(1)}(\zb_n,\tilde{\zb}_n)>\delta)=- \infty.
\end{equation}
\end{lem}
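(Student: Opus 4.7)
The plan is to work with the stronger sup-norm metric
$$\rho_c^{(1)}(\zb_n,\tilde{\zb}_n)=\sup_{0\le t\le T}\|\zb_n(t)-\tilde{\zb}_n(t)\|,$$
which dominates the Skorohod distance $\rho^{(1)}$ (the identity time change is always admissible), so it suffices to bound $\mathbb{P}(\rho_c^{(1)}(\zb_n,\tilde{\zb}_n)>\delta)$. Since $\zb_n$ and $\tilde{\zb}_n$ coincide at every node $t_j^n$ by construction, this sup-norm is exactly the maximum over $j=0,\ldots,n-1$ of the fluctuation of $\zb_n-\tilde{\zb}_n$ on $[t_j^n,t_{j+1}^n]$. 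The idea is that these fluctuations are controlled by the number of Poisson firings on a subinterval, and that number is super-exponentially concentrated.

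More precisely, let $N_j$ denote the total number of jumps across all $S$ reaction channels during $[t_j^n,t_{j+1}^n]$. Every jump shifts $\zb_n$ by a vector of norm at most $M/n$ with $M:=\max_i\|\ub_i\|$, so both $\|\zb_n(t)-\zb_n(t_j^n)\|$ and $\|\tilde{\zb}_n(t)-\zb_n(t_j^n)\|$ are bounded by $MN_j/n$ on the subinterval (the latter because linear interpolation does not increase the endpoint displacement). Consequently
$$\sup_{t\in[t_j^n,t_{j+1}^n]}\|\zb_n(t)-\tilde{\zb}_n(t)\|\le \frac{2MN_j}{n},$$
and the event $\{\rho^{(1)}(\zb_n,\tilde{\zb}_n)>\delta\}$ is contained in $\bigcup_j\{N_j>n\delta/(2M)\}$. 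Under Assumption~\ref{ass:main} the instantaneous total propensity $n\sum_i\lambda_i(\zb_n,\xi_n)$ is uniformly bounded by $nS\Lambda$, so $N_j$ is stochastically dominated by a Poisson random variable of mean $\mu:=S\Lambda T$ which is independent of $n$.

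A standard Chernoff/Stirling bound for the Poisson distribution gives $\mathbb{P}(N_j\ge k)\le e^{-\mu}(e\mu/k)^k$ for $k>\mu$; applying it with $k=\lceil n\delta/(2M)\rceil$ and union-bounding over the $n$ subintervals yields
$$\mathbb{P}\bigl(\rho^{(1)}(\zb_n,\tilde{\zb}_n)>\delta\bigr)\le n\,e^{-\mu}\left(\frac{2eM\mu}{n\delta}\right)^{\!n\delta/(2M)}.$$
Taking $\tfrac{1}{n}\log$ shows that the left-hand side decays at least like $-(\delta/2M)\log n$, so its $\limsup$ is $-\infty$, as required. The only delicate point is the stochastic-domination step: one must represent $N_j$ through the uni-rate Poisson clocks driving \eqref{eq:CKS1} and use the uniform bound $\lambda_i\le\Lambda$ from Assumption~\ref{ass:main} to couple it to an $n$-independent Poisson. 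Once this coupling is in place, the Poisson tail bound provides the super-exponential smallness automatically, and no finer large deviation input is needed.
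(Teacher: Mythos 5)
Your proof is correct, and it follows the same overall decomposition as the paper: pass to the sup-norm, use that $\zb_n$ and $\tilde{\zb}_n$ agree at the nodes $t_j^n$ so that the deviation on each subinterval is controlled by the oscillation of $\zb_n$ there, and union-bound over the $n$ subintervals. Where you diverge is in the tail estimate for that oscillation. The paper invokes its Corollary \ref{cor of lemma1}, which is derived from the exponential-martingale estimate of Lemma \ref{lemma1} via Doob's maximal inequality, yielding $\mathbb{P}(\sup_{t_j^n\le t\le t_{j+1}^n}\|\zb_n(t)-\zb_n(t_j^n)\|\ge \delta/2)\le 2d\exp(-n\tfrac{\delta c_1}{2}\log(\tfrac{n\delta c_3}{2}))$; you instead bound the oscillation by $2M N_j/n$ in terms of the jump count $N_j$, dominate $N_j$ by an $n$-independent Poisson of mean $S\Lambda T$ using $\lambda_i\le\Lambda$ from Assumption \ref{ass:main}, and apply the elementary Poisson Chernoff bound. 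Both routes produce the same crucial $e^{-cn\log n}$ decay, so the $\limsup$ is $-\infty$ either way. Your version is more elementary and self-contained, but the one step you rightly flag as delicate — the stochastic domination — does need to be made rigorous: in the random time-change representation \eqref{eq:CKS1} the internal clock increment over $[t_j^n,t_{j+1}^n]$ is deterministically at most $\Lambda T$ per channel, but its left endpoint is random, so you must either argue via the strong Markov property of the driving unit-rate Poisson processes at the corresponding stopping times (in the multiparameter filtration of the time-change construction) or realize the jumps as a thinning of rate-$n\Lambda$ Poisson processes. The paper's martingale route sidesteps this coupling and has the additional payoff that Lemma \ref{lemma1} and Corollary \ref{cor of lemma1} are reused elsewhere (e.g., in the exponential tightness Lemma \ref{exponential tightness} and in Lemma \ref{local lemma}), whereas your argument serves only this lemma.
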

The proof of Lemma \ref{distance between z and y} is left in the Appendix.

For given compact sets in $\mathbb C^d[0,T]$, the following quasi-LDP upper bound for $(\tilde{\zb}_n,\nu_n)$ holds.
\begin{lem}\label{Main lem}
Fix step functions $\thetab(t)\in \mathbb{R}^{d}$ and $\alphab(t)\in \mathbb{R}^{D}$. For any $\delta >0$ and compact sets $\mathcal{K} \in \mathbb{C}^d[0,T]$ and $\mathcal{S}\in \mathbb{M}_L[0,T]$, we have
\begin{equation}\label{is+if}
\mathop {\lim \sup }\limits_{n \to \infty } \frac{1}{n}\log\mathbb{P}((\tilde{\zb}_n,\nu_n)\in \mathcal{K} \times \mathcal{S}) \le
-\mathop {\inf}\limits_{(\rb,\nu) \in \mathcal{K} \times \mathcal{S}} \left(I_s^\delta(\rb,\nu,\thetab)+I_f^\delta(\rb, \nu,\alphab)\right),
\end{equation}
where
\begin{equation}
I_{s}^\delta(\rb,\nu,\thetab)= \left\{
\begin{array}{cl}
\int_0^T L^{\delta}_{s}(\rb(t),\dot\rb,n_{\nu}(t,\cdot),\thetab(t))dt,  & d\rb(t)=\dot{\rb}(t)dt,\\
 \infty,  & \text{otherwise},  \\
 \end{array}
 \right.
 \end{equation}
\begin{equation}\label{eq:LsDeltaDef}
 L^{\delta}_{s}(\zb,\betab,\wb,\pb)=\left\langle \betab,\pb \right\rangle - H_{s}^\delta(\zb,\pb,\wb),
 \end{equation}
 \begin{equation}
 H_{s}^\delta(\zb,\pb,\wb)=\mathop {\sup}_{|\xb-\zb|<\delta}H_{s}\left( {\xb,\pb ,\wb} \right),
 \end{equation}
and
\begin{equation}
I_f^\delta(\rb,\nu,\alphab)= \int_0^TS^\delta(\rb(t),n_\nu(t,\cdot),\alphab(t))dt,
\end{equation}
\begin{equation}\label{eq:SDeltaDef}
S^\delta(\zb,\wb,\sigmab)= -\mathop {\sup}_{|\xb-\zb|<\delta}\sum_{i,j=1}^{D}w_iq_{ij}(\zb)\left(e^{\left\langle\sigmab,\eb_{ij}\right\rangle}
-1\right).
\end{equation}
\end{lem}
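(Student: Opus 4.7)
The plan is the classical exponential-tilting upper bound. Given the step functions $(\thetab(t),\alphab(t))$, I build a mean-one exponential martingale $M_n$ from the generator $\mathcal{L}_n$, then combine $\mathbb{E}M_n(T)=1$ with Markov's inequality in the form $\mathbb{P}(B)\le\sup_B M_n(T)^{-1}$ to extract an exponential bound. The key is to recognise $n^{-1}\log M_n(T)$, on the relevant event, as the functional $I_s^\delta(\tilde\zb_n,\nu_n,\thetab)+I_f^\delta(\tilde\zb_n,\nu_n,\alphab)$. The exponential equivalence of $\zb_n$ and $\tilde\zb_n$ from Lemma~\ref{distance between z and y} confines the analysis to $E_n:=\{\rho_c^{(1)}(\zb_n,\tilde\zb_n)<\delta\}$, and it is precisely this $\delta$ that the $\delta$-enlargement built into $H_s^\delta,S^\delta$ is designed to absorb.

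\textbf{Construction of $M_n$.} Let $0=\tau_0<\cdots<\tau_K=T$ be the joint breakpoints of $\thetab,\alphab$, with constant values $\thetab_k,\alphab_k$ on $[\tau_{k-1},\tau_k)$ and $\alpha_{i,k}$ denoting the $i$-th component of $\alphab_k$. Take the test function $h_k(\zb,i)=\exp(n\langle\thetab_k,\zb\rangle-\alpha_{i,k})$. A direct computation from \eqref{eq:fullsystem} yields
\begin{equation*}
\frac{\mathcal{L}_n h_k}{h_k}(\zb,i)=n\sum_{l=1}^S\lambda_l(\zb,i)\bigl(e^{\langle\thetab_k,\ub_l\rangle}-1\bigr)+n\sum_{j\ne i}q_{ij}(\zb)\bigl(e^{\alpha_{i,k}-\alpha_{j,k}}-1\bigr)=n\bigl[H_s(\zb,\thetab_k,\eb_i)-S(\zb,\eb_i,\alphab_k)\bigr],
\end{equation*}
so Dynkin's formula makes
\begin{equation*}
M_n^{(k)}(t):=\frac{h_k(\zb_n(t),\xi_n(t))}{h_k(\zb_n(\tau_{k-1}),\xi_n(\tau_{k-1}))}\exp\Bigl(-n\!\int_{\tau_{k-1}}^{t}\!\bigl[H_s-S\bigr]\,ds\Bigr)
\end{equation*}
a local martingale on $[\tau_{k-1},\tau_k]$, promoted to a true mean-one martingale by the uniform bounds of Assumption~\ref{ass:main}. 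Concatenating, $M_n(T):=\prod_{k=1}^{K} M_n^{(k)}(\tau_k)$ has $\mathbb{E}M_n(T)=1$ and
\begin{equation*}
\tfrac{1}{n}\log M_n(T)=\sum_{k=1}^{K}\langle\thetab_k,\zb_n(\tau_k)-\zb_n(\tau_{k-1})\rangle-\int_0^T\!\bigl[H_s(\zb_n,\thetab,\eb_{\xi_n})-S(\zb_n,\eb_{\xi_n},\alphab)\bigr]ds+O(1/n),
\end{equation*}
the $O(1/n)$ absorbing the bounded $\alpha$-boundary values.

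\textbf{Replacement, Chebyshev, and the main obstacle.} I will refine the piecewise-linear mesh $\{t_j^n\}$ of $\tilde\zb_n$ to contain the finitely many $\tau_k$ (adding finitely many extra nodes does not affect Lemma~\ref{distance between z and y}), ensuring $\tilde\zb_n(\tau_k)=\zb_n(\tau_k)$ so that the discrete boundary sum telescopes exactly to $\int_0^T\langle\thetab(s),\dot{\tilde\zb}_n(s)\rangle\,ds$. On $E_n$, the definitions \eqref{eq:LsDeltaDef}--\eqref{eq:SDeltaDef} give $H_s(\zb_n,\cdot)\le H_s^\delta(\tilde\zb_n,\cdot)$ and $S(\zb_n,\cdot)\ge S^\delta(\tilde\zb_n,\cdot)$, so $\tfrac{1}{n}\log M_n(T)\ge I_s^\delta(\tilde\zb_n,\nu_n,\thetab)+I_f^\delta(\tilde\zb_n,\nu_n,\alphab)+O(1/n)$. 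With $A_n:=\{(\tilde\zb_n,\nu_n)\in\mathcal{K}\times\mathcal{S}\}$, Markov's inequality and $\mathbb{E}M_n(T)=1$ deliver $\mathbb{P}(A_n\cap E_n)\le\sup_{A_n\cap E_n}M_n(T)^{-1}\le\exp\bigl(-n\inf_{(\rb,\nu)\in\mathcal{K}\times\mathcal{S}}(I_s^\delta+I_f^\delta)+o(n)\bigr)$, while $\mathbb{P}(E_n^c)$ is super-exponentially small by Lemma~\ref{distance between z and y}; combining and taking $\limsup\tfrac{1}{n}\log$ of $\mathbb{P}(A_n)\le\mathbb{P}(A_n\cap E_n)+\mathbb{P}(E_n^c)$ yields \eqref{is+if}. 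The main subtlety is the mesh alignment: without it, each discrepancy $|\zb_n(\tau_k)-\tilde\zb_n(\tau_k)|<\delta$ on $E_n$ would contribute order $n\delta$ to $\log M_n(T)$, i.e.\ order $\delta$ after normalisation, an error \emph{not} absorbed by the $\delta$-enlargement built into $I^\delta$; the alignment trick cures this cleanly. A secondary point is the local-to-true martingale upgrade, for which the uniform boundedness of Assumption~\ref{ass:main} (rather than the weaker Assumption~\ref{ass:second}) is exactly what is needed.
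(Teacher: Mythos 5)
Your argument is correct, and it reaches \eqref{is+if} by a route that is organized quite differently from the paper's. The paper never tilts along the true trajectory globally: it first proves a local estimate on each micro-interval $[t_j^n,t_{j+1}^n]$ of length $T/n$ (Lemma \ref{local lemma}), with the spatial argument of $H_s^\delta$ and $S^\delta$ frozen at the left endpoint and the $\delta$-enlargement absorbing the in-interval fluctuation of $\zb_n$ (the complementary event being controlled inside the expectation via Corollary \ref{cor of lemma1}); chaining these by the Markov property yields $\limsup_n\frac1n\log\mathbb{E}\,e^{nJ_n}\le 0$ for the discrete functional $J_n$ of \eqref{eq:Jn} (Corollary \ref{cor of local lemma}), and after the Chebyshev step the remaining work is to prove $\liminf_n\inf_{\mathcal{K}\times\mathcal{S}}J_n=\inf_{\mathcal{K}\times\mathcal{S}}(I_s^\delta+I_f^\delta)$, which is exactly where the compactness of $\mathcal{K}\times\mathcal{S}$ (boundedness, equicontinuity, uniform convergence of the Riemann sums) is used. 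You instead keep the exponent as exact integrals along $\zb_n$ (using $n_{\nu_n}(t,\cdot)=\eb_{\xi_n(t)}$), restrict to $E_n=\{\rho_c^{(1)}(\zb_n,\tilde\zb_n)<\delta\}$, and let the $\delta$-enlargement transfer the integrand from $\zb_n$ to $\tilde\zb_n$; this buys a shorter proof that bypasses the Riemann-sum approximation entirely and in fact never uses compactness of $\mathcal{K}$ or $\mathcal{S}$ (your bound holds for arbitrary measurable sets), at the price of invoking Lemma \ref{distance between z and y} already inside this lemma — in its sup-norm form, which its appendix proof does deliver — rather than only in the final assembly of the upper bound. Two minor caveats: your mesh-alignment device alters the definition of $\tilde\zb_n$, which is harmless for Lemmas \ref{distance between z and y} and \ref{exponential tightness} (their proofs tolerate finitely many extra nodes) but would have to be carried consistently through the rest of Section 3; it is also avoidable, since you may define $E_n$ with a second threshold $\eta\le\delta$, so that the mismatch $\zb_n(\tau_k)-\tilde\zb_n(\tau_k)$ at the finitely many breakpoints contributes at most a constant multiple of $\eta$ to $\frac1n\log M_n(T)$, and then send $\eta\to 0$ after $n\to\infty$ (Lemma \ref{distance between z and y} holds for every fixed threshold). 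Finally, when $\inf_{\mathcal{K}\times\mathcal{S}}(I_s^\delta+I_f^\delta)=\infty$ your chain of inequalities still applies verbatim, since on $A_n$ the point $(\tilde\zb_n,\nu_n)$ is an absolutely continuous element of $\mathcal{K}\times\mathcal{S}$ with finite value, so $A_n\cap E_n$ is then empty; no separate case is needed.
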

 Before  the proof we remark that $H_{s}^\delta(\zb,\pb,\wb)$ and $L^{\delta}_{s}(\zb,\betab,\wb,\pb)$ are monotonically increasing and decreasing functions of $\delta$, respectively.
$S^\delta(\zb,\wb,\sigmab)$ is a monotonically decreasing function of $\delta$.
Correspondingly, $I_{s}^\delta(\rb,\nu,\thetab)$ and $I_f^\delta(\rb,\nu,\alphab)$ are  decreasing functionals of $\delta$.
\begin{proof}
We only need to consider absolutely continuous functions $\rb$ on the right hand side of Eq. \eqref{is+if} since
 $I_s^\delta(\rb,\nu,\thetab)+I_f^\delta(\rb, \nu,\alphab)=\infty$ otherwise.  For any $\rb$ and $\nu$, define the sum
\begin{align}
&J_n(\rb,\thetab ,\nu ,\alphab )= \sum\limits_{j = 0}^{n - 1} \Bigg( \left\langle \rb(t_{j + 1}^n) - \rb(t_j^n),\thetab (t_j^n) \right\rangle  \nonumber\\
 &-\int_{t_j^n}^{t_{j+1}^{n}} H_{s}^\delta  \left( \rb(t_j^n),\thetab (t_j^n),n_\nu(t,\cdot) \right)dt
  + \int_{t_j^n}^{t_{j+1}^{n}} S^\delta(\rb(t_j^n),n_\nu(t,\cdot),\alphab(t_j^n))dt \Bigg). \label{eq:Jn}
\end{align}
By Corollary \ref{cor of local lemma} in Appendix, we have
\begin{equation}\label{eq1 of main lemma}
 \mathop{\lim \sup}\limits_{n\to \infty}\frac{1}{n}\log\mathbb{E} \exp\left(nJ_n(\tilde{\zb}_n,\thetab,\nu_n,\alphab)\right)\le 0.
\end{equation}
For $(\tilde{\zb}_n,\nu_n)\in \mathcal{K}\times \mathcal{S}$, it is obvious that
\begin{equation}
{J_n}(\tilde{\zb}_n,\thetab ,\nu_n ,\alphab )-\mathop {\inf }\limits_{(\rb,v) \in {\mathcal{K}}\times \mathcal{S}}{J_n}(\rb,\thetab ,\nu ,\alphab)\ge 0.
\end{equation}
So we have
 $$\exp\left({J_n}(\tilde{\zb}_n,\thetab ,\nu_n ,\alphab )-\mathop {\inf }\limits_{(\rb,\nu) \in {\mathcal{K}}\times \mathcal{S}}{J_n}(\rb,\thetab ,\nu ,\alphab )\right)
\ge1$$
and
\begin{eqnarray*}
&&\mathbb{P}((\tilde{\zb}_n,\nu_n)\in \mathcal{K}\times \mathcal{S})\\
&\le& \mathbb{E} \exp \left\{n\Big[{J_n}(\tilde{\zb}_n,\thetab ,\nu_n ,\alphab )-\mathop {\inf }\limits_{(\rb,\nu) \in {\mathcal{K} \times \mathcal{S}}}{J_n}(\rb,\thetab ,\nu ,\alphab )\Big]\right\}.
\end{eqnarray*}
Combining this with (\ref{eq1 of main lemma}), we get
\begin{equation}\label{eq2 of main lemma}
\mathop {\lim \sup }\limits_{n \to \infty } \frac{1}{n}\log\mathbb{P}((\tilde{\zb}_n,\nu_n)\in \mathcal{K} \times \mathcal{S}) \le -\mathop {\lim \inf }\limits_{n \to \infty }\left(\mathop {\inf }\limits_{(\rb,\nu) \in \mathcal{K} \times \mathcal{S}}{J_n}(\rb,\thetab ,\nu ,\alphab )\right).
\end{equation}

We now represent the sum on the right hand side of (\ref{eq2 of main lemma}) as an integral. Since $\mathcal{K}$ is compact, the absolutely continuous functions $\rb\in \mathcal{K}$ are thus uniformly bounded. Let $V$ be a compact set in $\mathbb{R}^d$ such that
$$ \big\{\zb: \zb=\rb(t)\text{ for some } \rb\in \mathcal{K}\text{ and }t\in [0,T]\big\}\subset V.$$
For step function $\thetab$, let us investigate an interval in which $\thetab$ takes constant value $\thetab_{0}$, say,  the interval $[0,\tau]$ without loss of generality. Then
$$\sum\limits_{j = 0}^{n - 1} \chi_{\{t_{j + 1}^n \leqslant \tau\}} \left\langle \rb(t_{j+1}^n)-\rb(t_j^n),\thetab_{0} \right\rangle  = \int_0^\tau  {\left\langle {\dot\rb,\thetab_{0}} \right\rangle } dt+\epsilon_n,$$
where the error $\epsilon_n$ takes into account the fact that $\tau$ may not match any of $t_j^n$. It goes to zero uniformly for $\rb\in \mathcal{K}$ when $n$ goes to infinity from the bound
$$|\epsilon_n|\le \frac{2T}{n}|\thetab_0|\mathop{\sup}\limits_{\zb\in V}|\zb|.$$

Now $H_{s}^\delta(\zb,\pb,\wb)$ is continuous in $\zb$, $\pb$ and $\wb$ from the continuity of $H_{s}$ on $\xb,\pb$ and $\wb$ and the boundedness of $\lambda_i$, $\thetab$ and $n_\nu(t,\cdot)$ in the current setting.  So we have
$$\left| {H_{s}^\delta  \left( {\rb(t_j^n),\thetab (t_j^n),n_\nu(t,\cdot)} \right) - H_{s}^\delta  \left( {\rb(t),\thetab (t_j^n),n_\nu(t,\cdot)} \right)} \right|,~~t_j^n\le t\le t_{j+1}^n$$
goes to zero uniformly in $j$ for $\rb\in \mathcal{K}$ and $\nu \in \mathcal{S}$ by equi-continuity. Therefore,
\begin{align*}
&\sum_{j=0}^{n-1}\chi_{\{t_{j + 1}^n \leqslant \tau\}}\int_{t_j^n}^{t_{j+1}^{n}}H_{s}^\delta  \left( {\rb(t_j^n),\thetab (t_j^n),n_\nu(t,\cdot)} \right)dt\\
&=\int_0^{\tau}H_{s}^\delta  \left( {\rb(t),\thetab (t_j^n),n_\nu(t,\cdot)} \right) dt+\epsilon_n,
\end{align*}
with $\epsilon_n$ converging to zero uniformly in $(\rb,\nu)\in \mathcal{K} \times \mathcal{S}$.

Similarly we can estimate for the part $S^\delta(\rb(t_j^n),n_\nu(t,\cdot),\alphab(t_j^n))$  and repeat the argument on the finite number of intervals on which $\thetab$ and $\alphab$ are constants. Thanks to the uniformity in $(\rb, \nu) \in \mathcal{K} \times \mathcal{S} $, we obtain$$
\mathop {\lim \inf }\limits_{n \to \infty }\left(\mathop {\inf }\limits_{(\rb,\nu) \in \mathcal{K} \times \mathcal{S}}{J_n}(\rb,\thetab ,\nu ,\alphab )\right) = \mathop {\inf }\limits_{(\rb,\nu) \in \mathcal{K} \times \mathcal{S}}( I_{s}^\delta(\rb,\nu,\thetab)+I_{f}^\delta(\rb,\nu,\alphab)).$$
Together with (\ref{eq2 of main lemma}), the proof is completed.
\end{proof}

Next we show the exponential tightness of the sequence $(\tilde\zb_{n},\nu_n)$. Define the modulus of continuity of a continuous function $\zb$ as
\begin{equation}
V_{\delta}(\zb)= \sup\big\{\|\zb(t)-\zb(s)\|:0 \le s\le t \le T,|t-s|<\delta \big\}
\end{equation}
and the set
\begin{equation}\label{eq:KM}
\mathcal{K}(M)= \bigcap\limits_{m = M}^\infty \left\{\zb \in \mathbb C^d[0,T]: \zb(0)=\zb^0, V_{2^{-m}}(\zb)\le \frac{1}{\log m}\right\}
\end{equation}
for any fixed $M\in \mathbb N$.

\begin{lem}[Exponential tightness for $\tilde \zb_{n}$]\label{exponential tightness}
For each $B>0$, there is a compact set $\mathcal{K} \subset \mathbb{C}^d[0,T]$ such that
$$\mathop{\lim \sup}\limits_{n\to \infty}\frac{1}{n}\log\mathbb{P}(\tilde{\zb}_n \notin\mathcal{K}) \le -B.$$
\end{lem}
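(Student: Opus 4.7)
The plan is to take $\mathcal{K} = \mathcal{K}(M)$ from \eqref{eq:KM} for $M = M(B)$ chosen large enough. By the Arzelà–Ascoli theorem, any set of continuous paths on $[0,T]$ starting at the fixed point $\zb^0$ whose modulus of continuity $V_{2^{-m}}(\cdot)$ is uniformly bounded by $1/\log m$ for every $m \geq M$ is precompact in $(\mathbb{C}^d[0,T],\rho_c^{(1)})$; by the earlier remark it is then compact in $(\mathbb{D}^d[0,T],\rho^{(1)})$ as well. So it suffices to show that, for $M$ sufficiently large and all $n$ large,
\begin{equation*}
\mathbb{P}(\tilde{\zb}_n \notin \mathcal{K}(M)) \;\leq\; \sum_{m=M}^{\infty} \mathbb{P}\bigl(V_{2^{-m}}(\tilde{\zb}_n) > 1/\log m\bigr) \;\leq\; e^{-nB}.
\end{equation*}

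First I would reduce the modulus-of-continuity event for the piecewise-linear interpolant $\tilde{\zb}_n$ to an event about $\zb_n$ itself: since $\tilde{\zb}_n$ is linear on each interval $[t_j^n,t_{j+1}^n]$ of mesh $T/n$, a standard argument gives $V_{2^{-m}}(\tilde{\zb}_n) \leq 2\max_{k}\|\zb_n(s_{k+1}) - \zb_n(s_k)\|$, where $\{s_k\}$ is any grid on $[0,T]$ of mesh $\leq 2^{-m}$ (say, with $\lceil T\cdot 2^m \rceil + 1$ points). A further union bound then reduces the task to controlling $\mathbb{P}(\|\zb_n(s_{k+1}) - \zb_n(s_k)\| > c/\log m)$ for a single pair of times at distance $\leq 2^{-m}$.

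Next I would exploit the Poisson representation \eqref{eq:CKS1}. Since $\lambda_i \leq \Lambda$ by Assumption \ref{ass:main}, on any interval $[s,s+\delta]$ the number of firings of reaction $i$ is dominated by $P_i(n\Lambda\delta)$, and therefore
\begin{equation*}
\|\zb_n(s+\delta) - \zb_n(s)\| \;\leq\; \tfrac{1}{n}\sum_{i=1}^S \|\ub_i\|\, P_i(n\Lambda\delta).
\end{equation*}
Applying the exponential Chebyshev estimate
\begin{equation*}
\mathbb{P}\bigl(P_i(n\Lambda\delta) \geq nK\bigr) \;\leq\; \exp\!\bigl(-n[K\log(K/(\Lambda\delta)) - K + \Lambda\delta]\bigr)
\end{equation*}
with $\delta = 2^{-m}$ and $K$ equal to a suitable multiple of $1/\log m$, the Cramér rate function behaves like $(\log 2)\,m/\log m$ for large $m$, so each single-interval probability is bounded by $\exp(-c\, n\, m/\log m)$ for some constant $c>0$ depending only on $\Lambda$ and $\sum_i \|\ub_i\|$.

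Combining the union bounds, the total estimate is of the order $\sum_{m\geq M} T\cdot 2^m \cdot S \cdot \exp(-c\,n\,m/\log m)$. The main obstacle is making sure the combinatorial factor $2^m$ from the temporal union bound is absorbed by the Poisson rate: this works because $m/\log m$ dominates $m\log 2$ for large $m$, so by choosing $M$ large (depending on $B$) and using that $n/\log m$ can be made arbitrarily large, the series is bounded by a geometric tail of the form $e^{-nB}$. Note that $\xi_n$ plays no role in this argument because the bound $\lambda_i(\zb,j) \leq \Lambda$ in Assumption \ref{ass:main} is uniform in the fast variable $j$, so the Poisson majorization is valid pathwise regardless of the jumps of $\xi_n$.
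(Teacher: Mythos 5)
Your choice of $\mathcal{K}(M)$ and the Poisson--Chernoff machinery are reasonable, but the final step---summing the per-scale estimates over \emph{all} $m\ge M$---contains a genuine error. The comparison you invoke is backwards: $m/\log m$ does \emph{not} dominate $m\log 2$; on the contrary $m/\log m=o(m)$, so for each fixed $n$ the terms $2^m\exp(-c\,n\,m/\log m)=\exp(m\log 2-c\,n\,m/\log m)$ blow up once $m\gtrsim e^{cn/\log 2}$, and the series $\sum_{m\ge M}T\,2^m\,S\exp(-c\,n\,m/\log m)$ diverges for every $n$. No choice of $M=M(B)$ repairs this: the entropy factor $2^m$ coming from the dyadic grid eventually overwhelms the Chernoff exponent, which only grows like $m/\log m$ because the threshold $1/\log m$ shrinks with $m$. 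The missing idea is exactly the truncation the paper uses: since $\tilde{\zb}_n$ is piecewise linear on the mesh $T/n$, one has $V_{2^{-m-1}}(\tilde{\zb}_n)=\tfrac12 V_{2^{-m}}(\tilde{\zb}_n)$ as soon as $2^{-m}<T/n$, so membership in $\mathcal{K}(M)$ needs to be checked only for the finitely many scales $M\le m\le M(n)\approx\log_2(n/T)$; moreover the paper anchors the union bound at the $n$ interpolation nodes $t_j^n$ (via Corollary \ref{cor of lemma1}), so the combinatorial prefactor is $nM(n)$, polynomial in $n$ and hence invisible at the $\frac1n\log$ scale. The worst exponent is then of order $-c\,n\,M/\log M$, and letting $M\to\infty$ gives the lemma; without this truncation your bound never reaches $e^{-nB}$.

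A secondary, fixable point: the inequality $V_{2^{-m}}(\tilde{\zb}_n)\le 2\max_k\|\zb_n(s_{k+1})-\zb_n(s_k)\|$ is false as stated, because the oscillation of $\zb_n$ (and hence of its interpolant, whose breakpoints are much finer than $2^{-m}$) inside a window $[s_k,s_{k+1}]$ is not controlled by the endpoint increment---two reactions with opposite state-change vectors can cancel. You need $\max_k\sup_{s_k\le t\le s_{k+1}}\|\zb_n(t)-\zb_n(s_k)\|$ over (pairs of adjacent) windows, which your Poisson-count domination does control, since the total number of firings in a window bounds the oscillation; also, the domination of the firing count by $P_i(n\Lambda\delta)$ deserves a word, because the time change $n\int_0^t\lambda_i$ is random---one bounds the increment over a window through the adaptedness of the integrated propensity and an exponential (super)martingale, which is in substance the route of Lemma \ref{lemma1}. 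These two points are repairable; the divergent union bound over all scales is the real gap.
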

\begin{proof}
For any fixed $M\in \mathbb N$,  it is not difficult to see that the set $\mathcal{K}(M)$ is closed and the functions in $\mathcal{K}(M)$ are equicontinuous. Thus $\mathcal{K}(M)$ is compact by the Arzela-Ascoli theorem. If $2^{-m}<T/n$, we have
$$V_{2^{-m-1}}(\tilde{\zb}_n)=\frac{1}{2}V_{2^{-m}}(\tilde{\zb}_n)$$
since $\tilde{\zb}_n$ is piecewise linear. Therefore, to check whether $\tilde{\zb}_n$ is in $\mathcal{K}(M)$, we only need to consider a finite intersection, for values of $m$ up to
$$M(n)= \max\left\{M,\left\lceil {\frac{{\log (n/T)}}{{\log 2}}} \right\rceil \right\}.$$
Using Corollary \ref{cor of lemma1} in Appendix, we have for any $n$ with $M(n)>M$,
\begin{eqnarray*}
\mathbb{P}(\tilde{\zb}_n\notin\mathcal{K}(M))&\le& \sum_{m=M}^{M(n)}\mathbb{P}\left(V_{2^{-m}}(\tilde{\zb}_n)>\frac{1}{\log m}\right)\\
&\le& \sum_{m=M}^{M(n)}\sum_{j=0}^{n-1}\mathbb{P}\left(\mathop{\sup}\limits_{0\le t\le 2^{-m}}|\zb_n(t_j^n+t)-\zb_n(t_j^n)|>\frac{1}{\log M}\right)\\
&\le& n M(n) \cdot  2d \exp\left(-n\frac{c_1}{\log M}\log\left(\frac{2^Mc_2}{\log M}\right)\right)
\end{eqnarray*}
for positive constants $c_{1}$ and $c_{2}$. Thus
$$\mathop{\lim \sup}\limits_{n\to \infty}\frac{1}{n}\log\mathbb{P}(\tilde{\zb}_n \notin \mathcal{K}(M)) \le -c\frac{M}{\log M}$$
for some positive constant $c$ when $M\gg 1$.
\end{proof}

\begin{lem}\label{e tight of v}
The measure space $\mathbb{M}_L[0,T]$  is compact.
\end{lem}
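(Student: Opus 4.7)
The plan is to realise $\mathbb{M}_L[0,T]$ as a closed subset of a compact space of Borel measures on the compact Polish space $[0,T]\times\mathbb{Z}_D$, so that compactness becomes an immediate consequence of Prokhorov's theorem.

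First I would recall that $[0,T]\times\mathbb{Z}_D$ is a compact Polish space (a finite disjoint union of copies of $[0,T]$), and that every $\nu\in\mathbb{M}_L[0,T]$ has total mass exactly
\[
\nu([0,T]\times\mathbb{Z}_D)=\sum_{i=1}^D\int_0^T n_\nu(t,i)\,dt=\int_0^T 1\,dt=T.
\]
Thus $\mathbb{M}_L[0,T]$ sits inside the set $\mathcal{M}_T$ of Borel measures on $[0,T]\times\mathbb{Z}_D$ with total mass $T$. Since the underlying space is compact, every family in $\mathcal{M}_T$ is automatically tight, so by Prokhorov's theorem $\mathcal{M}_T$ is weakly compact; and weak convergence on $\mathcal{M}_T$ is metrised by the Lévy--Prohorov metric $\rho^{(2)}$.

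Next I would show $\mathbb{M}_L[0,T]$ is closed in $(\mathcal{M}_T,\rho^{(2)})$. Suppose $\nu_n\in\mathbb{M}_L[0,T]$ and $\nu_n\to\nu$ weakly. For every continuous $f:[0,T]\to\mathbb{R}$ the lift $\tilde f(t,i):=f(t)$ is continuous on the compact space $[0,T]\times\mathbb{Z}_D$, so
\[
\sum_{i=1}^D\int_0^T f(t)\,\nu_n(dt,i)\;\longrightarrow\;\sum_{i=1}^D\int_0^T f(t)\,\nu(dt,i).
\]
The left-hand side equals $\int_0^T f(t)\,dt$ for every $n$ since each $\nu_n$ has Lebesgue time marginal, so the limiting time marginal of $\nu$ must also be Lebesgue measure on $[0,T]$. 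In particular, for each $i$ the measure $\nu(\cdot,i)$ is dominated by the Lebesgue measure on $[0,T]$, hence is absolutely continuous with a nonnegative density $n_\nu(\cdot,i)$, and the Lebesgue marginal forces $\sum_{i=1}^D n_\nu(t,i)=1$ for a.e.\ $t$. Therefore $\nu\in\mathbb{M}_L[0,T]$.

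Combining the two observations, $\mathbb{M}_L[0,T]$ is a closed subset of the compact set $\mathcal{M}_T$, hence compact. There is no serious obstacle here: the only substantive point is that the constraint of having Lebesgue time marginal is preserved under weak convergence, which holds because it can be tested against functions that depend only on the (compact) time variable.
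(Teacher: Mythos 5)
Your proof is correct and follows essentially the same route as the paper: tightness is automatic from compactness of $[0,T]\times\mathbb{Z}_D$, Prohorov's theorem gives relative compactness, and closedness follows because the Lebesgue time-marginal constraint passes to weak limits and forces absolute continuity of the limit. Your version merely makes explicit (by testing against functions of $t$ alone) the step the paper states directly, namely that $\sum_{i}\nu_m(dt,i)=dt$ is preserved in the limit.
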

\begin{proof} Since $[0,T] \times \{1,2,\ldots ,D\}$ is compact, $\mathbb{M}_L[0,T]$ is tight. By Prohorov's theorem, $\mathbb{M}_L[0,T]$  is relatively compact.
Let $\nu$ be the limit of any converging sequence $\{\nu_m\}$ in $\mathbb{M}_L[0,T]$. Since $\sum_{i=1}^D \nu_m(dt,i)=dt$ for all $m$, we have $\sum_{i=1}^D \nu(dt,i)=dt$ and thus $\nu(dt,i)\ll dt$. So $\nu$ also belongs to $\mathbb{M}_L[0,T]$. This proves that $\mathbb{M}_L[0,T]$ is compact.
\end{proof}

The straightforward consequence of Lemma \ref{e tight of v} is that  $\nu_n$ is also exponentially tight.

Define the quasi-rate functionals for slow and fast variables corresponding to $I_s$ and $I_f$ in Theorem \ref{second result}
\begin{equation}
I_{s}^\delta(\rb,\nu)= \left\{
\begin{array}{cl}
   \int_0^T \mathop L^{\delta}_{s}(\rb(t),\dot\rb(t),n_{\nu}(t,\cdot))dt, & d\rb(t)=\dot{\rb}(t)dt,  \\
   \infty,  & \text{otherwise},  \\
 \end{array}
 \right.
\end{equation}
\begin{equation}
L^{\delta}_{s}(\zb,\betab,\wb) = \sup\limits_{\pb \in \mathbb{R}^d}L^{\delta}_{s}(\zb,\betab,\wb,\pb),
\end{equation}
and
\begin{equation}
I_{f}^\delta(\rb,\nu)=\int_0^T S^\delta(\rb(t),n_\nu(t,\cdot))dt,
\end{equation}
\begin{equation}\label{Sdelta}
 S^\delta(\zb,\wb)=\mathop{\sup}\limits_{\sigmab \in \mathbb{R}^D}S^\delta(\zb,\wb,\sigmab).
\end{equation}
The definitions of $L^{\delta}_{s}(\zb,\betab,\wb,\pb)$ and $S^\delta(\zb,\wb,\sigmab)$ are referred to \eqref{eq:LsDeltaDef} and \eqref{eq:SDeltaDef}.  We have the following approximation lemmas.
\begin{lem}\label{last lem of upp}
 For any $\epsilon >0$,  the absolutely continuous function $\rb\in \mathbb{C}^d[0,T]$ and $\nu \in \mathbb{M}_L [0,T]$, there exists neighborhood $N_{\rb,\nu} \in \mathbb{C}^d[0,T]\times\mathbb{M}_L[0,T]$ of $(\rb,\nu)$, step functions $\thetab_{\rb\nu} \subset \mathbb{R}^d$ and $\alphab_{\rb\nu} \subset \mathbb{R}^D$, such that for any $\qb \in N_{\rb}$ and $\mu \in N_\nu$ which are both absolutely continuous, we have
 $$I_{s}^\delta(\qb,\mu,\thetab_{\rb\nu})+I_{f}^\delta(\qb,\mu,\alphab_{\rb\nu})\ge I_{s}^\delta(\rb,\nu)+I_{f}^\delta(\rb,\nu)-\epsilon.$$
\end{lem}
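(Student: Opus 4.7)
The plan is to choose piecewise-constant test functions $\thetab_{\rb\nu}$ and $\alphab_{\rb\nu}$ that nearly attain the Fenchel suprema defining $I_{s}^\delta(\rb,\nu)$ and $I_{f}^\delta(\rb,\nu)$, and then to verify that the corresponding lower-bound functionals stay within $\epsilon$ of these values throughout a small neighborhood $N_{\rb,\nu}$. The proof naturally splits into an approximation step at $(\rb,\nu)$ and a continuity step for perturbations $(\qb,\mu)\in N_{\rb,\nu}$.

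For the approximation step I would first invoke measurable selection to produce bounded pointwise near-maximizers $\pb^{\ast}(t)$ and $\sigmab^{\ast}(t)$ of the suprema in \eqref{eq:LsDef} and \eqref{eq:SDef}; boundedness of $\pb^{\ast}$ on a set of arbitrarily large measure follows from Assumption~\ref{ass:main}, which gives $\lambda_i \ge 1/\Lambda$ and hence coercivity of $H_{s}^\delta(\zb,\pb,\wb)$ in $\pb$. Truncating and approximating $\pb^{\ast},\sigmab^{\ast}$ in $L^1([0,T])$ by step functions yields $\thetab_{\rb\nu}$ and $\alphab_{\rb\nu}$ with
$$I_{s}^\delta(\rb,\nu,\thetab_{\rb\nu}) + I_{f}^\delta(\rb,\nu,\alphab_{\rb\nu}) \ge I_{s}^\delta(\rb,\nu) + I_{f}^\delta(\rb,\nu) - \tfrac{\epsilon}{2}.$$
For the continuity step, since $\thetab_{\rb\nu}$ is piecewise constant with breakpoints $0=s_0<\cdots<s_K=T$, the linear-in-$\dot\qb$ piece telescopes,
$$\int_0^T \langle \dot\qb(t),\thetab_{\rb\nu}(t)\rangle\,dt = \sum_{k=0}^{K-1} \langle \thetab_{\rb\nu}(s_k),\,\qb(s_{k+1})-\qb(s_k)\rangle,$$
which depends only on the values of $\qb$ at finitely many nodes and is therefore continuous in the sup norm $\rho_c^{(1)}$. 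The remaining integrands $H_{s}^\delta(\qb(t),\thetab_{\rb\nu}(t),n_\mu(t,\cdot))$ and $S^\delta(\qb(t),n_\mu(t,\cdot),\alphab_{\rb\nu}(t))$ are bounded and Lipschitz in $\qb$ (by Assumption~\ref{ass:main}) and linear in $n_\mu(t,\cdot)$, so continuity in $\mu$ in the Lévy--Prohorov metric reduces to weak convergence on the compact space $[0,T]\times\mathbb{Z}_{D}$. Shrinking $N_{\rb,\nu}$ absorbs an additional $\epsilon/2$ error and the triangle inequality yields the conclusion.

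The main obstacle I expect is the degenerate case $I_{s}^\delta(\rb,\nu)=\infty$ or $I_{f}^\delta(\rb,\nu)=\infty$, in which the pointwise maximizer $\pb^{\ast}(t)$ is not bounded and the truncation argument above breaks down. The resolution is to treat this case separately by choosing step functions with $I_{s}^\delta(\rb,\nu,\thetab_{\rb\nu})+I_{f}^\delta(\rb,\nu,\alphab_{\rb\nu})$ exceeding any prescribed $M\ge 1/\epsilon$; this is feasible because the Fenchel supremum blows up along some direction whenever $\dot\rb(t)$ falls outside the effective domain $\mathcal{C}$ on a positive-measure set, and the same continuity argument for fixed step functions then preserves largeness inside $N_{\rb,\nu}$. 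The remaining ingredients (measurability of $\pb^{\ast},\sigmab^{\ast}$ via Filippov's lemma, and standard $L^1$ step-function approximation) are routine bookkeeping.
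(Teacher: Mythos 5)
Your overall architecture is the same as the paper's: choose step functions $\thetab_{\rb\nu},\alphab_{\rb\nu}$ that nearly attain $I_s^\delta(\rb,\nu)+I_f^\delta(\rb,\nu)$ (this is the paper's Lemma \ref{step function}), then show the fixed-step-function functionals do not drop by more than $\epsilon$ on a small neighborhood (the paper gets this from the lower-semicontinuity Lemma \ref{lower semicontinuity of I+F}, estimate \eqref{lower 2}). The gap is in your second step. You claim outright continuity of $(\qb,\mu)\mapsto I_s^\delta(\qb,\mu,\thetab_{\rb\nu})+I_f^\delta(\qb,\mu,\alphab_{\rb\nu})$ in $\mu$ on the grounds that the integrands are ``linear in $n_\mu(t,\cdot)$'' so that everything reduces to weak convergence on $[0,T]\times\mathbb{Z}_D$. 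This is not justified: because of the supremum over the $\delta$-ball in \eqref{eq:LsDeltaDef}--\eqref{eq:SDeltaDef}, $H_s^\delta(\zb,\pb,\wb)$ is a supremum of $\wb$-linear functions with $\xb$-dependent coefficients, hence convex but in general not linear in $\wb$; and, more fundamentally, L\'evy--Prohorov closeness of $\mu$ to $\nu$ only controls integrals of fixed bounded functions against $\mu$, not the pointwise values of the densities $n_\mu(t,\cdot)$, which may oscillate rapidly while $\mu$ stays weakly close to $\nu$. For an integrand that is nonlinear in $\wb$ one therefore cannot pass to the limit by a portmanteau argument, and continuity in $\mu$ genuinely fails; what survives is a one-sided semicontinuity statement, and proving it in the direction needed here is exactly the content of the paper's Lemma \ref{lower semicontinuity of I+F}, whose proof uses joint convexity of the integrand, Jensen's inequality on a fine partition (replacing $\dot\qb$ and $n_\mu$ by their interval averages, which \emph{are} controlled by weak convergence), and Fatou's lemma on nested partitions. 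Your proposal contains no substitute for this convexity/averaging argument, so the ``continuity step'' as written does not close.

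Two smaller points. In the approximation step, truncating the near-maximizers $\pb^\ast(t)$ and then replacing them by a step function off a small-measure set only perturbs $I_s^\delta(\rb,\nu,\cdot)$ slightly if the contribution of the set where $\|\dot\rb(t)\|$ is large is already controlled; this is the role of the paper's Lemma \ref{uniformly absolute continuity} inside the proof of Lemma \ref{step function}, and your appeal to coercivity ``on a set of arbitrarily large measure'' needs to be made quantitative in exactly this way. Your treatment of the telescoping term $\sum_k\langle\thetab_{\rb\nu}(s_k),\qb(s_{k+1})-\qb(s_k)\rangle$ and of the degenerate case of an infinite rate (choosing step functions that push the value above any prescribed $M$, in the spirit of Lemma \ref{last lem of upp 2}) is fine.
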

\begin{lem}\label{last lem of upp 2}
For any pair $(\rb,\nu)\in \mathbb{C}^d[0,T]\times \mathbb{M}_L[0,T]$ and $M_0 >0$, if $\rb$  is not absolutely continuous, there exists neighborhood $N_{\rb,\nu} \in \mathbb{C}^d[0,T]\times\mathbb{M}_L[0,T]$ of $(\rb,\nu)$ and step functions $\thetab_{\rb\nu} \in \mathbb{R}^d$ and $\alphab_{\rb\nu} \in \mathbb{R}^D$,  such that for any $(\qb,\mu) \in N_{\rb,\nu}$, we have
 $$I_{s}^\delta(\qb,\mu,\thetab_{\rb\nu})+I_{f}^\delta(\qb,\mu,\alphab_{\rb\nu})\ge M_0.$$
\end{lem}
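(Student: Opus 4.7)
The plan is to exploit the non-absolute continuity of $\rb$ via a highly peaked step function $\thetab_{\rb\nu}$. By definition there is a fixed $\epsilon_0>0$ such that, for every $\delta_0>0$, one can extract finitely many disjoint sub-intervals $\{[s_k,t_k]\}_{k=1}^N$ of $[0,T]$ with total length $<\delta_0$ yet total variation $\sum_k\|\rb(t_k)-\rb(s_k)\|\ge\epsilon_0$. A step function concentrated on these intervals will make $\int_0^T\langle\dot\qb,\thetab_{\rb\nu}\rangle\,dt$ large (of order $A\epsilon_0$), while the penalty $\int_0^T H_s^\delta(\qb,\thetab_{\rb\nu},n_\mu)\,dt$ stays small because it is supported on a set of measure $<\delta_0$ on which the integrand is uniformly bounded by $S\Lambda(e^{AU}+1)$, where $U:=\max_i\|\ub_i\|$ and I use the boundedness of $\lambda_i$ from Assumption \ref{ass:main} together with $\sum_j w_j=1$. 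Setting $\alphab_{\rb\nu}\equiv 0$ further kills the fast-variable term, since $S^\delta(\cdot,\cdot,0)\equiv 0$ forces $I_f^\delta(\qb,\mu,\alphab_{\rb\nu})=0$ identically.

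Concretely, for non-AC $\qb$ the quantity $I_s^\delta(\qb,\mu,\thetab_{\rb\nu})$ is already $+\infty$ by definition, so I restrict attention to AC $\qb$. Writing $\hat{v}_k:=(\rb(t_k)-\rb(s_k))/\|\rb(t_k)-\rb(s_k)\|$ and
\begin{equation*}
\thetab_{\rb\nu}(t):=A\hat{v}_k\ \text{on}\ [s_k,t_k),\qquad \thetab_{\rb\nu}(t):=0\ \text{elsewhere},
\end{equation*}
the elementary bound $|\langle\qb(t_k)-\qb(s_k)-(\rb(t_k)-\rb(s_k)),\hat{v}_k\rangle|\le 2\eta$, valid for $\qb$ in the sup-norm ball of radius $\eta$ around $\rb$, yields
\begin{equation*}
\int_0^T\langle\dot\qb,\thetab_{\rb\nu}\rangle\,dt=A\sum_k\langle\qb(t_k)-\qb(s_k),\hat{v}_k\rangle\ge A(\epsilon_0-2N\eta),
\end{equation*}
and combined with the uniform Hamiltonian upper bound on the support one gets
\begin{equation*}
I_s^\delta(\qb,\mu,\thetab_{\rb\nu})+I_f^\delta(\qb,\mu,\alphab_{\rb\nu})\ge A(\epsilon_0-2N\eta)-S\Lambda(e^{AU}+1)\delta_0.
\end{equation*}

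The only delicate point, and the principal obstacle, is the order of quantifiers: the Hamiltonian penalty grows exponentially in $A$, so $A$ must be chosen before the support of $\thetab_{\rb\nu}$ is shrunk. Non-absolute continuity supplies exactly this freedom, since $\epsilon_0$ is intrinsic to $\rb$ while $\delta_0$ can be pushed to zero independently. The correct ordering is: pick $A\ge 2(M_0+1)/\epsilon_0$ first; then pick $\delta_0$ so small that $S\Lambda(e^{AU}+1)\delta_0\le 1$; then extract the intervals $\{[s_k,t_k]\}_{k=1}^N$ at this $\delta_0$; and finally set $\eta:=\epsilon_0/(4NA)$. Take $N_{\rb,\nu}$ to be the product of the $\eta$-sup-ball around $\rb$ with all of $\mathbb{M}_L[0,T]$, since the above bounds are uniform in $\mu$. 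These choices yield $I_s^\delta+I_f^\delta\ge A\epsilon_0/2-1\ge M_0$ on $N_{\rb,\nu}$, completing the proof.
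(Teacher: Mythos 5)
Your construction is correct, and it takes a genuinely different route from the paper: the paper dismisses this lemma in one line as a ``direct consequence'' of Lemma \ref{lower semicontinuity of I+F} (lower semicontinuity at a fixed step function, \eqref{lower 2}) and Lemma \ref{step function}, exploiting that $I_s^\delta(\rb,\nu,\thetab)=+\infty$ by convention when $\rb$ is not absolutely continuous, whereas you give the direct Shwartz--Weiss-style construction: extract from the failure of absolute continuity an $\epsilon_0>0$ and finitely many intervals of total length $<\delta_0$ carrying variation $\ge\epsilon_0$, align a step function of large amplitude $A$ with the increments, bound the Hamiltonian penalty by $S\Lambda e^{AU}\delta_0$ on the small support (it vanishes off the support since $H_s^\delta(\cdot,\boldsymbol 0,\cdot)=0$), kill the fast term with $\alphab\equiv\boldsymbol 0$, and order the quantifiers $A\to\delta_0\to N\to\eta$. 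Your estimates check out: the linear term is computed exactly for absolutely continuous $\qb$, the bound on $H_s^\delta$ uses only $\lambda_i\le\Lambda$ and $\sum_j w_j=1$ (hence uniformity in $\mu$, justifying $N_{\rb,\nu}$ with full $\mathbb{M}_L[0,T]$ in the second factor), and non-absolutely-continuous $\qb$ give $+\infty$ trivially; the only cosmetic point is that the final step $A\epsilon_0-\epsilon_0/2-1\ge A\epsilon_0/2-1$ implicitly needs $A\ge 1$, so take $A=\max\{1,\,2(M_0+1)/\epsilon_0\}$. What your approach buys is real: for a \emph{fixed} step function $\thetab$, the map $\qb\mapsto I_s^\delta(\qb,\mu,\thetab)$ is bounded above uniformly on a sup-norm neighborhood of $\rb$ (the linear part is a finite sum of increments and $-H_s^\delta\le S\Lambda$), so lower semicontinuity in the form \eqref{lower 2} cannot by itself propagate the conventional value $+\infty$ at a non-absolutely-continuous $\rb$ to a neighborhood; the step function must be chosen with amplitude growing with $M_0$, which is exactly the quantitative content your proof makes explicit and the paper's one-line citation glosses over.
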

The Lemmas \ref{last lem of upp} and \ref{last lem of upp 2} are direct consequences of  Lemmas \ref{lower semicontinuity of I+F} and \ref{step function} in the appendix.

Simply denote the product metric $\rho^{(1)}\times \rho^{(2)}$ on $\mathbb D^d[0,T]\times\mathbb{M}_L[0,T]$ as $d(\cdot,\cdot)$  and define the sets
\begin{equation}\label{eq:PhiKDef}
\Phi(K)=\left\{(\rb,\nu)\in \mathbb{D}^d[0,T]\times \mathbb{M}_L[0,T]:I_{s}(\rb,\nu)+I_{f}(\rb,\nu)\le K\right\}
\end{equation}
and
\begin{equation}\label{eq:PhiKDeltaDef}
\Phi^\delta(K)=\left\{(\rb,\nu)\in \mathbb{D}^d[0,T]\times \mathbb{M}_L[0,T]:I_{s}^\delta(\rb,\nu)+I_{f}^\delta(\rb,\nu)\le K\right\}.
\end{equation}
We have the following characterization for $\Phi(K)$ and $\Phi^\delta(K)$.
\begin{lem}\label{compactness of I+F}
For any $K>0$, the level sets $\Phi(K)$ and $\Phi^\delta(K)$ defined in \eqref{eq:PhiKDef} and \eqref{eq:PhiKDeltaDef} are compact sets.
\end{lem}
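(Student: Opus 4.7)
The plan is to prove compactness by verifying sequential compactness in the Polish space $(\mathbb{D}^d[0,T]\times\mathbb{M}_L[0,T], \rho^{(1)}\times\rho^{(2)})$, decomposed into (i) relative compactness (tightness) of the level sets, and (ii) closedness via lower semicontinuity of the rate functional. Since $I_s+I_f \ge I_s^\delta + I_f^\delta$, the set $\Phi(K)$ is contained in $\Phi^\delta(K)$, so once both pieces are established for $\Phi^\delta(K)$, the case $\Phi(K)$ follows from its own closedness.

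For step (i), the crucial estimate is a uniform superlinear lower bound on $L_s^\delta$. From Assumption~\ref{ass:main} the bound $\lambda_i\le\Lambda$ gives, uniformly in $\zb$ and $\wb\in\Delta_D$,
\begin{equation*}
H_s(\zb,\pb,\wb)\le\Lambda\sum_{i=1}^S\bigl(e^{\langle\pb,\ub_i\rangle}-1\bigr),
\end{equation*}
and the same bound holds for $H_s^\delta$ since the neighborhood supremum preserves a $\zb$-independent majorant. By Legendre duality,
\begin{equation*}
L_s^\delta(\zb,\betab,\wb)\ge\Psi(\betab):=\sup_{\pb\in\mathbb{R}^d}\Bigl(\langle\pb,\betab\rangle-\Lambda\sum_{i=1}^S(e^{\langle\pb,\ub_i\rangle}-1)\Bigr),
\end{equation*}
and testing with $\pb=t\,\betab/|\betab|$ for $t>0$ shows $\Psi(\betab)\gtrsim|\betab|\log|\betab|$ as $|\betab|\to\infty$. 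Thus every $(\rb,\nu)\in\Phi^\delta(K)$ has $\rb$ absolutely continuous with $\int_0^T\Psi(\dot\rb)\,dt\le K$ and $\rb(0)=\zb^0$ (this initial condition, inherited from $\zb_n(0)=\zb^0$, must be enforced for the rate functional to be finite); de la Vall\'ee Poussin then yields uniform integrability of $\{\dot\rb\}$, hence equicontinuity of $\{\rb\}$. Together with the fixed starting point, Arzela-Ascoli gives relative compactness in $(\mathbb{C}^d[0,T],\rho_c^{(1)})$, which since $\rho_c^{(1)}$ is stronger than $\rho^{(1)}$ transfers to relative compactness in $(\mathbb{D}^d[0,T],\rho^{(1)})$. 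The $\nu$-component automatically lies in the compact space $\mathbb{M}_L[0,T]$ by Lemma~\ref{e tight of v}.

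For step (ii), the lower semicontinuity of $I_s+I_f$ and of $I_s^\delta+I_f^\delta$ on the product space is exactly the content of Lemma~\ref{lower semicontinuity of I+F} in the appendix; applied to a convergent sequence $(\rb_n,\nu_n)\to(\rb,\nu)$ inside $\Phi^\delta(K)$ it gives
\begin{equation*}
I_s^\delta(\rb,\nu)+I_f^\delta(\rb,\nu)\le\liminf_{n\to\infty}\bigl(I_s^\delta(\rb_n,\nu_n)+I_f^\delta(\rb_n,\nu_n)\bigr)\le K,
\end{equation*}
so $(\rb,\nu)\in\Phi^\delta(K)$, and similarly for $\Phi(K)$. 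Combining (i) and (ii) yields the desired compactness.

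The main subtlety I anticipate is ensuring that both the superlinear lower bound and the semicontinuity properties survive the neighborhood supremum defining $H_s^\delta$ and $S^\delta$. The upper bound used to extract $\Psi$ is $\zb$-independent, so it transfers to $H_s^\delta$ without effort, which makes step (i) uniform in $\delta$; however the $\zb$-semicontinuity of $L_s^\delta$ is more delicate since $\zb\mapsto\sup_{|\xb-\zb|<\delta}H_s(\xb,\pb,\wb)$ is only upper semicontinuous in general. This is precisely the technical point handled by the cited appendix lemma, and I would lean on that rather than re-deriving it here.
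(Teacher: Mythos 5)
Your argument is essentially the paper's proof: equicontinuity of the paths in the level set via the superlinear ($\|\betab\|\log\|\betab\|$) growth of $L_s$ (this is exactly the content of Lemma \ref{uniformly absolute continuity}), compactness of $\mathbb{M}_L[0,T]$ from Lemma \ref{e tight of v}, and closedness from the lower semicontinuity in Lemma \ref{lower semicontinuity of I+F}, with the fixed initial point $\rb(0)=\zb^0$ giving Arzela--Ascoli precompactness. One cosmetic correction: the displayed majorant $H_s(\zb,\pb,\wb)\le \Lambda\sum_{i=1}^S(e^{\langle\pb,\ub_i\rangle}-1)$ is false when some $e^{\langle\pb,\ub_i\rangle}<1$ (the bound $\lambda_i\le\Lambda$ reverses on negative terms); use instead $H_s\le \Lambda\sum_{i=1}^S e^{\langle\pb,\ub_i\rangle}\le S\Lambda e^{U\|\pb\|}$, which is still $\zb$-independent (hence passes to $H_s^\delta$) and yields the same superlinear Legendre lower bound, so the rest of your argument is unaffected.
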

\begin{proof}By Lemma \ref{e tight of v}, $\mathbb{M}_L[0,T]$ is a compact set. By Lemma \ref{uniformly absolute continuity}, the functions $\rb\in \Phi(K)$ are equicontinuous. Combining with the fact that $\rb(0)=\zb^{0}$, we have that $\Phi(K)$ is pre-compact. By Lemma \ref{lower semicontinuity of I+F}, $I_s(\rb,\nu)+I_{f}(\rb,\nu)$ is lower semicontinuous. Consequently, $\Phi(K)$ is closed and thus compact. The proof for $\Phi^\delta(K)$ is similar.
\end{proof}

\begin{pro}\label{main Proposition}
For each $K>0$, $\delta >0$ and $\epsilon >0$, $$
\mathop {\lim \sup }\limits_{n \to \infty } \frac{1}{n}\log\mathbb{P}\left(d\left((\tilde{\zb}_n,\nu_n),\Phi^{\delta}(K)\right)>\epsilon\right)\le -(K-\epsilon).$$
\end{pro}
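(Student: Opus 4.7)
The plan is to combine the exponential tightness of $(\tilde{\zb}_n,\nu_n)$ with the quasi-LDP upper bound on compacts (Lemma \ref{Main lem}) through a finite covering argument based on the local approximation Lemmas \ref{last lem of upp} and \ref{last lem of upp 2}. Fix $K,\delta,\epsilon>0$ and abbreviate $A_n:=\bigl\{d\bigl((\tilde\zb_n,\nu_n),\Phi^\delta(K)\bigr)>\epsilon\bigr\}$. First I would use Lemma \ref{exponential tightness} to choose, for any $B>K$, a compact $\mathcal{K}_B\subset\mathbb{C}^d[0,T]$ so that $\mathbb{P}(\tilde\zb_n\notin\mathcal{K}_B)\le e^{-nB}$ for all large $n$. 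Because $\mathbb{M}_L[0,T]$ is already compact (Lemma \ref{e tight of v}), the product $\mathcal{K}_B\times\mathbb{M}_L[0,T]$ is compact in $(\mathbb{D}^d[0,T]\times\mathbb{M}_L[0,T],\rho^{(1)}\times\rho^{(2)})$. Writing
\[
\mathbb{P}(A_n)\le \mathbb{P}(\tilde\zb_n\notin\mathcal{K}_B)+\mathbb{P}\bigl((\tilde\zb_n,\nu_n)\in F_B\bigr),\quad F_B:=A_n\cap(\mathcal{K}_B\times\mathbb{M}_L[0,T]),
\]
reduces the problem to bounding the second term on the compact (in fact closed in a compact) set $F_B$.

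Next I would exploit that every $(\rb,\nu)\in F_B$ satisfies $I_{s}^\delta(\rb,\nu)+I_{f}^\delta(\rb,\nu)>K$, since $F_B\cap\Phi^\delta(K)=\emptyset$. For such a pair, Lemma \ref{last lem of upp} (if $\rb$ is absolutely continuous) or Lemma \ref{last lem of upp 2} (otherwise, with $M_0:=K$) produces an open neighborhood $N_{\rb,\nu}$ and step functions $\thetab_{\rb\nu},\alphab_{\rb\nu}$ for which
\[
\inf_{(\qb,\mu)\in N_{\rb,\nu}}\bigl(I_{s}^\delta(\qb,\mu,\thetab_{\rb\nu})+I_{f}^\delta(\qb,\mu,\alphab_{\rb\nu})\bigr)\ge K-\tfrac{\epsilon}{2}.
\]
By compactness of $F_B$, extract a finite subcover $\{N_{\rb_k,\nu_k}\}_{k=1}^{N}$ with associated step functions $(\thetab_k,\alphab_k)$. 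Shrinking each $N_{\rb_k,\nu_k}$ if necessary, I may assume its closure is contained in $\mathcal{K}'\times\mathbb{M}_L[0,T]$ for some compact $\mathcal{K}'\subset\mathbb{C}^d[0,T]$, which is the precise hypothesis under which Lemma \ref{Main lem} applies.

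Applying Lemma \ref{Main lem} to each closed set $\overline{N_{\rb_k,\nu_k}}$ with the step functions $(\thetab_k,\alphab_k)$ gives
\[
\limsup_{n\to\infty}\frac{1}{n}\log\mathbb{P}\bigl((\tilde\zb_n,\nu_n)\in \overline{N_{\rb_k,\nu_k}}\bigr)\le -(K-\tfrac{\epsilon}{2}).
\]
Using the elementary identity $\limsup_n\frac{1}{n}\log\sum_{k=1}^N a_n^{(k)}=\max_k\limsup_n\frac{1}{n}\log a_n^{(k)}$, summing over the finite subcover yields
\[
\limsup_{n\to\infty}\frac{1}{n}\log\mathbb{P}\bigl((\tilde\zb_n,\nu_n)\in F_B\bigr)\le -(K-\tfrac{\epsilon}{2}).
\]
Combining this with $\mathbb{P}(\tilde\zb_n\notin\mathcal{K}_B)\le e^{-nB}$ and choosing $B\ge K$ delivers the claimed estimate $\limsup_n\frac{1}{n}\log\mathbb{P}(A_n)\le -(K-\epsilon)$.

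The main obstacle I anticipate is the bookkeeping at the interface between the different function spaces and metrics. Lemma \ref{Main lem} requires the compact $\mathcal{K}$ to sit inside $\mathbb{C}^d[0,T]$ (with the sup norm), whereas the distance $d$ defining $A_n$ uses the weaker Skorohod metric $\rho^{(1)}$ on $\mathbb{D}^d[0,T]$; care is needed so that the neighborhoods $N_{\rb_k,\nu_k}$ chosen via Lemmas \ref{last lem of upp} and \ref{last lem of upp 2} remain open in the product Skorohod topology used for $\mathbb{P}((\tilde\zb_n,\nu_n)\in\cdot)$ while their closures remain in the sup-norm compact set $\mathcal{K}'$ required by Lemma \ref{Main lem}. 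A second subtlety is the treatment of non absolutely continuous $\rb\in F_B$: Lemma \ref{last lem of upp 2} handles these by producing neighborhoods on which the quasi-rate functional exceeds any preassigned $M_0$, and one must choose $M_0\ge K$ so that these neighborhoods contribute no worse than the absolutely continuous ones when the finite subcover is assembled.
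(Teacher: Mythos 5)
Your outline reproduces the paper's argument almost step for step: exponential tightness of $\tilde\zb_n$ (Lemma \ref{exponential tightness}) plus compactness of $\mathbb{M}_L[0,T]$ (Lemma \ref{e tight of v}) to reduce to a compact set, a covering of that set by the neighborhoods furnished by Lemmas \ref{last lem of upp} and \ref{last lem of upp 2} (using that every point off $\Phi^\delta(K)$ has quasi-rate $>K$), a finite subcover, the quasi-upper bound of Lemma \ref{Main lem} with the associated step functions, the largest-exponent principle, and finally sending the tightness level to infinity. Two technical points in your write-up need repair, and the paper's proof shows how. First, $F_B$ as you define it is the intersection of an \emph{open} set with a compact set, so it is not itself compact; you must pass to its closure before extracting a finite subcover (this is harmless, since the closure lies in $\{d(\cdot,\Phi^\delta(K))\ge\epsilon\}$ and is still disjoint from $\Phi^\delta(K)$) --- the paper builds this in by defining $\mathcal{K}^{N,\epsilon}$ as the closure of $\{d>\epsilon\}$ intersected with $\mathcal{K}^N\times\mathbb{M}_L[0,T]$. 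Second, your proposed fix of ``shrinking each $N_{\rb_k,\nu_k}$ so that its closure is contained in $\mathcal{K}'\times\mathbb{M}_L[0,T]$'' cannot be carried out: a sup-norm compact $\mathcal{K}'\subset\mathbb{C}^d[0,T]$ is equicontinuous by Arzel\`a--Ascoli and therefore has empty interior in the Skorohod topology, so no nonempty open neighborhood, however small, has closure inside $\mathcal{K}'\times\mathbb{M}_L[0,T]$. The correct move, and the one the paper makes, is not to shrink but to intersect: one applies Lemma \ref{Main lem} to the compact sets $\mathcal{K}_{ij}=\overline{N_{\rb_i,\nu_j}\cap\mathcal{K}^{N,\epsilon}}$, which do sit inside $\mathcal{K}^N\times\mathbb{M}_L[0,T]$, and then bounds $\mathbb{P}(A_n)$ by $\mathbb{P}(\tilde\zb_n\notin\mathcal{K}^N)+\sum_{i,j}\mathbb{P}\big((\tilde\zb_n,\nu_n)\in\mathcal{K}_{ij}\big)$. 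Since you already introduced the intersected set $F_B$ in your first step, this is an immediate correction rather than a missing idea; with it your argument coincides with the paper's.
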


\begin{proof} From the exponential tightness, we can find a compact set $\mathcal{K}^N \in \mathbb{C}^d[0,T]$ for each $N>0$ such that
 $$\mathop {\lim \sup }\limits_{n \to \infty } \frac{1}{n}\log\mathbb{P}(\tilde{\zb}_n \notin \mathcal{K}^N)\le -N.$$
Define the set
 $$\mathcal{K}^{N,\epsilon} = \overline{\left\{(\rb,\nu)\in \mathbb C^d[0,T]\times\mathbb{M}_L[0,T]: d\left((\rb,\nu),\Phi^{\delta}(K)\right)>\epsilon\right\}} \cap (\mathcal{K}^N\times \mathbb{M}_L[0,T]).$$
For any $(\rb,\nu) \in \mathcal{K}^{N,\epsilon}$, we can find the neighborhood  $N_{\rb,\nu}$ either satisfying  Lemma \ref{last lem of upp} if $\rb$ is absolutely continuous, or satisfying Lemma \ref{last lem of upp 2} if $\rb$ is not absolutely continuous. This forms a covering of $\mathcal{K}^{N,\epsilon}$. By compactness, we can choose a finite subcover $\{N_{\rb_i,\nu_j}\}_{i,j}$ for $\mathcal{K}^{N,\epsilon}$. Define
 $$\mathcal K_{ij}= \overline {N_{\rb_i,\nu_j} \cap \mathcal{K}^{N,\epsilon}}.$$
Applying Lemma \ref{Main lem}, Lemma \ref{last lem of upp}, Lemma \ref{last lem of upp 2} and letting $M_0$ in Lemma \ref{last lem of upp 2} larger than $K$, we have for any $i,j$,
 $$\mathop {\lim \sup }\limits_{n \to \infty } \frac{1}{n}\log\mathbb{P}((\tilde{\zb}_n,\nu_n)\in \mathcal {K}_{ij})\le
-\left(K-\epsilon\right). $$

 Then we have
\begin{align*}
&\mathop {\lim \sup }\limits_{n \to \infty } \frac{1}{n}\log \mathbb{P}\left(d\left((\tilde{\zb}_n,\nu_n),{\Phi^\delta}(K)\right)> \epsilon \right)\\
\le&\mathop {\lim \sup }\limits_{n\to \infty } \frac{1}{n}\log\left[\mathbb{P}(\tilde{\zb}_n \notin \mathcal{K}^N)+\sum\limits_{i,j }\mathbb{P}((\tilde{\zb}_n,\nu_n)\in \mathcal{K}_{ij})\right]\\
\le&-\min\left\{N,K-\epsilon \right\}.
\end{align*}
Choosing $N$ large enough, we complete the proof.
\end{proof}

We are now ready to establish the upper bound.
\begin{lem}\label{phi and  phi^delta}
Given $K>0$ and $\epsilon>0$, there exist $\delta>0$ such that
$$ \Phi^\delta(K-\epsilon) \subset \{(\rb,\nu):d((\rb,\nu),\Phi(K))\le \epsilon\}.$$
\end{lem}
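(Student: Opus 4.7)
The plan is to argue by contradiction. Suppose the claim fails. Then there is a sequence $\delta_n \downarrow 0$ and pairs $(\rb_n,\nu_n) \in \Phi^{\delta_n}(K-\epsilon)$ with $d((\rb_n,\nu_n),\Phi(K)) > \epsilon$. Since $I_{s}^{\delta}$ and $I_{f}^{\delta}$ are monotonically decreasing in $\delta$ (as remarked after Lemma \ref{Main lem}), we have $\Phi^{\delta_n}(K-\epsilon) \subset \Phi^{\delta_1}(K-\epsilon)$ for every $n \ge 1$, and the latter is compact by Lemma \ref{compactness of I+F}. Extracting a subsequence, we may assume $(\rb_n,\nu_n) \to (\rb^{*},\nu^{*})$ in $\mathbb{D}^d[0,T]\times\mathbb{M}_L[0,T]$.

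The goal is to establish $(\rb^{*},\nu^{*}) \in \Phi(K-\epsilon) \subset \Phi(K)$, which contradicts $d((\rb_n,\nu_n),\Phi(K)) > \epsilon$. Fix any $\delta > 0$. For all $n$ large enough so that $\delta_n \le \delta$, the monotonicity yields
\begin{equation*}
I_{s}^{\delta}(\rb_n,\nu_n) + I_{f}^{\delta}(\rb_n,\nu_n) \le I_{s}^{\delta_n}(\rb_n,\nu_n) + I_{f}^{\delta_n}(\rb_n,\nu_n) \le K - \epsilon.
\end{equation*}
Invoking the lower semicontinuity of the functional $I_{s}^{\delta} + I_{f}^{\delta}$ (the $\delta$-analog of Lemma \ref{lower semicontinuity of I+F} used to prove Lemma \ref{compactness of I+F}) gives $I_{s}^{\delta}(\rb^{*},\nu^{*}) + I_{f}^{\delta}(\rb^{*},\nu^{*}) \le K - \epsilon$ for every $\delta > 0$.

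Now let $\delta \downarrow 0$. Because $H_{s}$ and $S$ are continuous in the state variable $\zb$, the definitions \eqref{eq:LsDeltaDef}--\eqref{eq:SDeltaDef} imply $H_{s}^{\delta}(\zb,\pb,\wb) \downarrow H_{s}(\zb,\pb,\wb)$ and $-S^{\delta}(\zb,\wb,\sigmab) \downarrow -S(\zb,\wb,\sigmab)$ pointwise as $\delta \downarrow 0$. Since the Legendre--Fenchel transform reverses the pointwise order, taking suprema in $\pb$ and $\sigmab$ respectively gives $L_{s}^{\delta} \uparrow L_{s}$ and $S^{\delta} \uparrow S$ pointwise. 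The monotone convergence theorem applied to the integrals defining $I_{s}^{\delta}$ and $I_{f}^{\delta}$ then yields
\begin{equation*}
I_{s}(\rb^{*},\nu^{*}) + I_{f}(\rb^{*},\nu^{*}) = \lim_{\delta \downarrow 0} \left( I_{s}^{\delta}(\rb^{*},\nu^{*}) + I_{f}^{\delta}(\rb^{*},\nu^{*}) \right) \le K - \epsilon < K,
\end{equation*}
so $(\rb^{*},\nu^{*}) \in \Phi(K)$, the desired contradiction. (The absolute continuity of $\rb^{*}$ needed for $I_{s}$ to be finite is inherited from the uniform bound $I_{s}^{\delta}(\rb_n,\nu_n) \le K - \epsilon$ via the equicontinuity statement in Lemma \ref{uniformly absolute continuity} alluded to in the proof of Lemma \ref{compactness of I+F}.)

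The main obstacle is the monotone convergence step $I^{\delta} \uparrow I$. The pointwise monotonicity of $H_{s}^{\delta}$ and $S^{\delta}$ is immediate, but transferring it through the Legendre--Fenchel transform requires that the suprema over $\pb$ and $\sigmab$ commute with the limit $\delta \downarrow 0$; this is precisely where one uses that $L_{s}^{\delta} \le L_{s}$ (so each $L_{s}^{\delta}$ is dominated) together with the pointwise continuous limit of the dual integrands to conclude pointwise convergence. Once this is in place, the remaining ingredients (compactness, lower semicontinuity, monotonicity in $\delta$) have already been assembled in the preceding lemmas.
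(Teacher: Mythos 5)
Your proposal is correct and follows essentially the same route as the paper: contradiction, compactness of $\Phi^{\delta_1}(K-\epsilon)$ via Lemma \ref{compactness of I+F}, extraction of a convergent subsequence, lower semicontinuity of $I_s^{\delta}+I_f^{\delta}$ (Lemma \ref{lower semicontinuity of I+F}), and monotone convergence as $\delta\downarrow 0$ to place the limit point in $\Phi(K)$. The only cosmetic difference is that you fix an arbitrary $\delta$ and use the tail of the sequence, while the paper runs the same estimate along the sequence $\delta_j$ directly; both are equivalent.
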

%The proof of Lemma \ref{phi and  phi^delta} can be referred to the proof of Lemma 5.48 in \cite{Shwartz1995}.

\begin{proof}
Prove  by contradiction. If the claim is false, we can choose
$$\delta_i  \downarrow 0, \quad \quad (\rb_i,\nu_i) \in \Phi^{\delta_i}(K-\epsilon), \quad i=1,2,\cdots$$
such that
\begin{equation}\label{eq:ContraEq}
d((\rb_i,\nu_i),\Phi(K))\ge \epsilon, \quad \forall i.
\end{equation}

By definition of $I_s^\delta(\rb,\nu)$ and $I_f^\delta(\rb,\nu)$, we have the monotonicity $I_{s}^{\delta}(\rb,\nu)\le I_{s}^{\delta'}(\rb,\nu)$ and $I_{f}^{\delta}(\rb,\nu)\le I_{f}^{\delta'}(\rb,\nu)$ when $\delta \ge \delta'\ge 0$. Thus the sets $\Phi^{\delta_i}(K-\epsilon)$ are monotonically decreasing as $\delta_i \downarrow 0$, and $(\rb_i,\nu_i)$ are contained in the set $\Phi^{\delta_1}(K-\epsilon)$ which is compact by Lemma \ref{compactness of I+F}.
So there exists a subsequence converging to  $(\rb_0,\nu_0)$.  With Lemma \ref{lower semicontinuity of I+F} in the Appendix we have for
each $j$
\begin{eqnarray*}
I_{s}^{\delta_j}(\rb_0,\nu_0)+I_{f}^{\delta_j}(\rb_0,\nu_0) &\le& \mathop {\lim\inf}_{i\to \infty}\left(I_{s}^{\delta_j}(\rb_i,\nu_i)+I_{f}^{\delta_j}(\rb_i,\nu_i)\right)\\
&\le& \mathop {\lim\inf}_{i\to \infty}\left(I_{s}^{\delta_i}(\rb_i,\nu_i)+I_{f}^{\delta_i}(\rb_i,\nu_i)\right)\\
&\le& K-\epsilon.
\end{eqnarray*}
The monotone convergence theorem gives
\begin{eqnarray*}
I_{s}(\rb_0,\nu_0)+I_{f}(\rb_0,\nu_0) &=& \mathop {\lim}_{j\to \infty}I_{s}^{\delta_j}(\rb_0,\nu_0)+I_{f}^{\delta_j}(\rb_0,\nu_0)\\
&\le& K-\epsilon.
\end{eqnarray*}
So $(\rb_0,\nu_0) \in \Phi(K)$. For sufficiently large $i$, $d((\rb_0,\nu_0),(\rb_i,\nu_i))\le \epsilon$. This contradicts with \eqref{eq:ContraEq}.
\end{proof}

\begin{thm}
For each closed set $F \subset \mathbb{D}^d[0,T] \times \mathbb{M}_L[0,T]$,
$$\mathop{\lim \sup}\limits_{n \to \infty} \frac{1}{n} \log\mathbb{P}((\zb_n,\nu_n) \in F)\le  -\mathop {\inf}\limits_{(\rb,\nu) \in F} \left(I_s
(\rb,\nu)
+I_{f}(\rb,\nu)\right).$$
\end{thm}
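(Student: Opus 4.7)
The plan is to combine three ingredients already assembled in the preceding lemmas: (i) the quasi-upper bound of Proposition \ref{main Proposition}, which controls $(\tilde{\zb}_n,\nu_n)$ relative to the sublevel sets $\Phi^{\delta}(K)$; (ii) the approximation estimate in Lemma \ref{phi and phi^delta}, which shows that $\Phi^{\delta}(K-\epsilon)$ collapses into an $\epsilon$-neighborhood of $\Phi(K)$ as $\delta\to 0$; and (iii) the exponential equivalence of $\zb_n$ and $\tilde{\zb}_n$ furnished by Lemma \ref{distance between z and y}, which lets us pass from the piecewise-linear interpolant back to the original process.

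First I would set $K_0 := \inf_{(\rb,\nu)\in F} I(\rb,\nu)$, where $I = I_s+I_f$, and assume $K_0>0$ (the case $K_0=0$ being trivial). Fix any $K\in (0,K_0)$. Since $I>K$ on $F$, the compact set $\Phi(K)$ (compactness by Lemma \ref{compactness of I+F}) is disjoint from the closed set $F$, so $2\eta := d(\Phi(K),F) > 0$. Applying Lemma \ref{phi and phi^delta} with $K$ and $\eta$, I would choose $\delta>0$ so that $\Phi^{\delta}(K-\eta)\subset \{(\rb,\nu): d((\rb,\nu),\Phi(K))\le \eta\}$. A triangle inequality then shows that every point of $\Phi^{\delta}(K-\eta)$ lies at distance at least $\eta$ from $F$.

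Next, for any $\xi\in(0,\eta/2)$ I would split
\begin{align*}
\mathbb{P}((\zb_n,\nu_n)\in F) &\le \mathbb{P}\bigl(d((\tilde{\zb}_n,\nu_n),\Phi^{\delta}(K-\eta))>\eta/2\bigr) \\
&\quad + \mathbb{P}(\rho^{(1)}(\zb_n,\tilde{\zb}_n)>\xi),
\end{align*}
using that on the event $\{\rho^{(1)}(\zb_n,\tilde{\zb}_n)\le\xi\}\cap\{(\zb_n,\nu_n)\in F\}$ the pair $(\tilde{\zb}_n,\nu_n)$ is within $\xi$ of $F$ and thus at distance at least $\eta-\xi>\eta/2$ from $\Phi^{\delta}(K-\eta)$. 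Proposition \ref{main Proposition}, applied with $K-\eta$ in place of $K$ and $\eta/2$ in place of $\epsilon$, gives
$$\limsup_{n\to\infty}\frac{1}{n}\log\mathbb{P}\bigl(d((\tilde{\zb}_n,\nu_n),\Phi^{\delta}(K-\eta))>\eta/2\bigr)\le -(K-3\eta/2),$$
while Lemma \ref{distance between z and y} kills the second term (its $\limsup n^{-1}\log$ equals $-\infty$). Combining and using the standard $\log(a+b)\le \log 2 + \max(\log a,\log b)$ bound,
$$\limsup_{n\to\infty}\frac{1}{n}\log\mathbb{P}((\zb_n,\nu_n)\in F)\le -(K-3\eta/2).$$
Sending $\eta\downarrow 0$ and then $K\uparrow K_0$ completes the argument.

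The main obstacle has in fact been discharged in Lemma \ref{phi and phi^delta} and Proposition \ref{main Proposition}; at the level of this theorem the only subtlety is making sure the separation $\eta$ between $\Phi(K)$ and $F$ survives the $\delta$-inflation, which is exactly what Lemma \ref{phi and phi^delta} arranges by choosing $\delta$ small enough relative to $\eta$. A small bookkeeping care is needed to align topologies: $F$ is closed in the Skorohod $\times$ L\'evy--Prohorov product metric $d(\cdot,\cdot)$ introduced above \eqref{eq:PhiKDef}, and $\tilde\zb_n\in\mathbb{C}^d\subset\mathbb{D}^d$, so all distance comparisons in the argument above are performed with respect to this $d$.
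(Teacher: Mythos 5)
Your proposal is correct and follows essentially the same route as the paper: approximate $\zb_n$ by the interpolant $\tilde\zb_n$ via Lemma \ref{distance between z and y}, separate the closed set $F$ from the compact sublevel set $\Phi(K)$ (the paper uses $\Phi(K-\epsilon)$ with $K$ the exact infimum, which is the same thing up to renaming), transfer to the $\delta$-relaxed level set via Lemma \ref{phi and phi^delta}, and invoke Proposition \ref{main Proposition}. The only cosmetic caveat is that after defining $2\eta:=d(\Phi(K),F)$ you should note (as the paper does with ``for any $\eta\le\eta_0$'') that the argument is valid for every smaller positive value of $\eta$, which is what justifies the final passage $\eta\downarrow 0$ and keeps $K-\eta>0$ in the application of the proposition.
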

\begin{proof} Suppose $\mathop {\inf}_{(\rb,\nu) \in F} \left(I_s(\rb,\nu)
+I_{f}(\rb,\nu)\right)=K< \infty $.  Since $F$ and $\Phi(K-\epsilon)$ are both closed sets, we assume the distance between them is $\eta_0>0$. For any $\eta \le \eta_0$,
\begin{eqnarray}\notag
&&\mathbb{P}((\zb_n,\nu_n) \in F)\\\notag
&\le& \mathbb{P}\left(d\left((\tilde \zb_n, \nu_n),F\right)\le \frac{\eta}{2}\right)+\mathbb{P} \left(d(\tilde{\zb}_n,\nu_n),(\zb_n,\nu_n)) \ge \frac {\eta}{2}\right) \\
\label{equation up 0}
&\le&  \mathbb{P}\left(d\left((\tilde{\zb}_n,\nu_n),\Phi(K-\epsilon)\right)\ge \frac{\eta}{2}\right)+\mathbb{P} \left(\rho^{(1)}((\tilde{\zb}_n,\zb_n) \ge \frac {\eta}{2
}\right)
\end{eqnarray}
By Lemma \ref{phi and  phi^delta}, we can choose $\delta$ and $\eta$ small enough such that
$$d\big((\tilde{\zb}_n,\nu_n),\Phi(K-\epsilon)\big)\ge \frac{\eta}{2} \quad\text{implies}\quad d\left((\tilde{\zb}_n,\nu_n),{\Phi^\delta}(K-\epsilon-\eta/4)\right)\ge \frac{\eta}{4}.$$
From Proposition \ref{main Proposition} we have
\begin{eqnarray}\notag
&&\mathop{\lim \sup}\limits_{n \to \infty} \frac{1}{n} \log \mathbb{P}\left(d\left((\tilde{\zb}_n,\nu_n),\Phi(K-\epsilon)\right)\ge \frac{\eta}{2}\right)\\ \notag
&\le& \mathop{\lim \sup}\limits_{n \to \infty} \frac{1}{n} \log \mathbb{P}
\left(d\left((\tilde{\zb}_n,\nu_n),{\Phi^\delta}(K-\epsilon-\eta/4)\right)\ge \frac{\eta}{4}\right)\\
&\le&-(K-\epsilon-\eta/2).\label{equation up 1}
\end{eqnarray}
Combining \eqref{equation up 0}, \eqref{equation up 1} and Lemma \ref{distance between z and y} for $\delta = \eta/4$,  we obtain
$$\mathop{\lim \sup}\limits_{n \to \infty} \frac{1}{n} \log\mathbb{P}((\zb_n,\nu_n) \in F)\le -(K-\epsilon-\eta/2).$$
The case for
$$\mathop {\inf}\limits_{(\rb,\nu) \in F} \left(I_s(\rb,\nu)
+I_{f}(\rb,\nu)\right)=\infty$$
can be established similarly by choosing $K$ arbitrarily large.
\end{proof}

\subsection{Lower bound}
The proof of the lower bound is based on the change of measure formula.
From \cite{Dembo1993}, it suffices to prove that for any $(\rb,\nu) \in \mathbb{D}^d[0,T]\times \mathbb{M}_L[0,T]$ and arbitrarily small $\epsilon>0$ we have
\begin{equation}
\mathop{\lim \inf}\limits_{n\to \infty}\frac{1}{n}\log\mathbb{P}\left(\zb_n\in N_\epsilon(\rb),\nu_n\in N_\epsilon(\nu)\right) \ge - \left(I_s(\rb,\nu)+I_{f}(\rb,\nu)\right),
\end{equation}
where $N_\epsilon(\rb)$ is the $\epsilon$-neighborhood of $\rb$ in $\mathbb{D}^d[0,T]$ with metric $\rho^{(1)}$, and
$N_\epsilon(\nu)$ is the $\epsilon$-neighborhood of $\nu$ in $\mathbb{M}_L[0,T]$ with metric $\rho^{(2)}$. For given $\rb \in \mathbb{D}^d[0,T]$ and $\nu \in \mathbb{M}_L[0,T]$, if $\rb$ is not absolutely continuous, $I_s(\rb,\nu)+I_{f}(\rb,\nu)=\infty$, thus nothing needs to be proved. Below we will exclude this case. For convenience, we further assume that $n_\nu(t,i)$ is continuous in $t$, and the case that $n_\nu(t,i)$ is not continuous will be discussed in Theorem \ref{final theorem} in this section. To prove the lower bound, we perform the following steps. Firstly, we approximate $\rb$ by a piecewise linear path $\yb$, and the occupation measure $\nu$ by $\pi\in \mathbb M_L[0,T]$ with $n_{\pi}(t,\cdot)$ piecewise constant in $t$. Secondly, we construct new processes $\bar \zb_n$ and $\bar \xi_n$ with occupation measure $\bar\nu_{n}$  such that
\begin{equation}\label{eq:Plimz}
\mathbb{P}-\lim_{n \to \infty}\rho_c^{(1)}(\bar \zb_n,\yb)=0,\quad  \mathbb{P}-\lim_{n \to \infty}\rho^{(2)}(\bar \nu_n,\pi)=0,
 \end{equation}
 where the notation $\mathbb P-\lim$ means the convergence in probability.  Moreover, we ask  $\bar \zb_n$ and the jump rates of $\bar \xi_n$ satisfy the conditions required by Lemmas \ref{up 1} and \ref{up 2}. Finally, based on the change of measure formula related to $(\zb_n,\xi_n)$ and $(\bar \zb_n,\bar \xi_n)$, we get the limit and the proof is then finished.

As promised in the above procedure,  we approximate $\rb$ by a  path $\yb$ first. For a given $J$, define $\Delta = T/J$ and let $t_m = m\Delta$. On each interval $[t_m,t_{m+1}]$, define $\Delta \rb_m=\rb(t_{m+1})-\rb(t_m)$. Take $\mub^m = \{\mu_i^m,i=1,\ldots, S\}$ so as to satisfy
\begin{equation}
\sum_{i=1}^S \mu_i^m\ub_i=\frac{\Delta \rb_m}{\Delta} \quad \mbox{and}\quad \mu^m_i\ge 0.
\end{equation}
If $\Delta \rb_m$ are in the positive cone generated by the $\{\ub_i\}$ for all $m$, such a choice of $\mub^m$ is possible. If at least one of $\Delta \rb_m$ is not in the positive cone generated by the $\{\ub_i\}$ , it is easy to check that  for all $\nu \in \mathbb {M}[0,T]$, $I_s(\rb,\nu)=+\infty$ (see the Remark of Lemma 5.21 in \cite{Shwartz1995}) and nothing needs to be proved.

Now we construct the piecewise linear interpolation $\yb$ of $\rb$ such that $\yb(t_{0})=\rb(t_{0})$ and in each time interval $[t_m,t_{m+1}]$
\begin{equation}
\frac{d}{dt}\yb(t)=\sum_{i=1}^S \mu_i^m\ub_i.
\end{equation}
Thus $\yb(t_{m})=\rb(t_{m})$ for each $m$. For any $\epsilon>0$, we can choose $J$ large enough such that
 $$\rho_c^{(1)}(\yb,\rb)<\epsilon/4.$$

Define the sets
\begin{eqnarray}\label{Sm}
\mathcal S=\Big\{(\eta,\psi)\big| {\eta}=(\eta_{ij})_{D\times D}, ~\eta_{ij}> 0;~ \psi\in \Delta_D;
\sum_{i=1}^D\psi_i\sum_{j=1}^D\eta_{ij}{\eb}_{ij}=\boldsymbol 0
\Big\}
\end{eqnarray}
and
\begin{eqnarray}\label{Ky}
\mathcal K_{\betab}=\Big\{\mub\in \mathbb R^S:\mu_{i}\ge 0,\sum_{i=1}^{S} \mu_{i}\ub_i=\betab\Big\}.
\end{eqnarray}
We remark that the sets $\mathcal{S}$ and $\mathcal{K}_{\betab}$ here have nothing to do with the definitions in the proof of upper bound.

\begin{lem}\label{up 1}
For any $\epsilon>0$ and  large enough $J$, there exists a further subdivision of time interval $[t_m,t_{m+1}]$ for each $m\in\{0,1,\cdots,J-1\}$ (i.e., $t_m=t_{m0}<t_{m1}<\cdots<t_{mK_m}=t_{m+1}$) and related  $(\eta^{mk},\psi^{mk})\in \mathcal S$ $(m=0,1,\cdots,J-1; k=0,1,\cdots,K_m-1)$, such that
\begin{eqnarray*}
&&\sum_{m=0}^{J-1}\sum_{k=0}^{K_m-1}\int_{t_{mk}}^{t_{m,k+1}}\sum_{i=1}^D \psi^{mk}_i\sum_{j=1}^D\Big(\eta_{ij}^{mk}\log \frac{\eta_{ij}^{mk}}{q_{ij}(\yb(t))}+q_{ij}(\yb(t))-\eta_{ij}^{mk}\Big)dt\\
&\le& I_f(\rb,\nu)+\epsilon.
\end{eqnarray*}
and
\begin{equation}\label{eq:PsiNnuDiff}
\|\psi^{mk}-n_\nu(t,\cdot)\|<\epsilon/(4DT)
\end{equation}
for all $t \in [t_{mk},t_{m,k+1})$,  $k=0,1,\cdots,K_m-1$ and $m=0,1,\cdots,J-1$.
\end{lem}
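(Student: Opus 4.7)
The plan is to combine a Donsker--Varadhan type entropy identity for $S(\zb,\wb)$ with a Riemann-sum discretization of $I_f(\rb,\nu)=\int_0^T S(\rb(t),n_\nu(t,\cdot))\,dt$, leveraging the assumed continuity of $n_\nu(t,\cdot)$ in $t$ together with the Lipschitz regularity of $\log q_{ij}$ from Assumption \ref{ass:main}(2).

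The first step is to invoke (or derive as the Legendre dual of the variational formula \eqref{eq:SDef}) the entropy-form identity
\[
S(\zb,\wb)=\inf_{\eta:\,(\eta,\wb)\in\mathcal S}\sum_{i=1}^D w_i\sum_{j=1}^D\Big(\eta_{ij}\log\frac{\eta_{ij}}{q_{ij}(\zb)}+q_{ij}(\zb)-\eta_{ij}\Big),
\]
which is precisely why the set $\mathcal S$ in \eqref{Sm} carries the stationarity constraint $\sum_i\psi_i\sum_j\eta_{ij}\eb_{ij}=\boldsymbol 0$. For any $(\zb,\wb)$ with $\wb$ in the interior of $\Delta_D$ and any prescribed error $\epsilon'>0$, this identity produces a rate matrix $\eta$ with $\eta_{ij}>0$ whose entropy sum differs from $S(\zb,\wb)$ by at most $\epsilon'$.

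Using the continuity of $n_\nu(t,\cdot)$ and $\rb(t)$ on $[0,T]$, I would refine each interval $[t_m,t_{m+1}]$ into sub-intervals $t_m=t_{m0}<\cdots<t_{mK_m}=t_{m+1}$ with mesh small enough that simultaneously $\|n_\nu(t,\cdot)-n_\nu(t_{mk},\cdot)\|<\epsilon/(4DT)$ and $\|\rb(t)-\rb(t_{mk})\|$ is so small (through the Lipschitz bound on $\log q_{ij}$) that $S(\rb(t),n_\nu(t,\cdot))$ oscillates by at most $\epsilon/(2T)$ over each piece. Setting $\psi^{mk}:=n_\nu(t_{mk},\cdot)$, after a tiny perturbation into $\Delta_D^\circ$ if necessary to secure strict positivity, preserves \eqref{eq:PsiNnuDiff}. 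For each pair $(\rb(t_{mk}),\psi^{mk})$ I would then pick $\eta^{mk}$ as an approximate minimizer in the entropy identity to accuracy $\epsilon/(2T)$, and because $\yb(t_{mk})=\rb(t_{mk})$, replacing $q_{ij}(\rb(t_{mk}))$ by $q_{ij}(\yb(t))$ costs only an additional error controlled by the Lipschitz constant of $\log q_{ij}$ and the mesh size. Summing over $(m,k)$ and comparing with $I_f(\rb,\nu)$, the cumulative error stays below $\epsilon$, which is exactly the claimed inequality.

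The main obstacle is the simultaneous enforcement of three competing requirements on $(\eta^{mk},\psi^{mk})$: the strict positivity $\eta^{mk}_{ij}>0$ built into $\mathcal S$, the stationarity $\sum_i\psi^{mk}_i\sum_j\eta^{mk}_{ij}\eb_{ij}=\boldsymbol 0$, and the closeness condition \eqref{eq:PsiNnuDiff}. The resolution leans on the lower and upper bounds on $q_{ij}$ implied by the boundedness of $\log q_{ij}$, which prevent any approximate minimizer $\eta^{mk}$ from degenerating to $0$ or blowing up, together with the freedom to nudge $\psi^{mk}$ slightly toward the center of $\Delta_D$ at arbitrarily small cost. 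Crucially, the lemma only requires $\|\psi^{mk}-n_\nu(t,\cdot)\|<\epsilon/(4DT)$ rather than equality, so this perturbation is compatible with choosing a matching $\eta^{mk}$ that satisfies the stationarity constraint while keeping the entropy sum close to $S(\rb(t_{mk}),n_\nu(t_{mk},\cdot))$.
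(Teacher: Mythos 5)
Your plan is essentially the route the paper takes: use the Donsker--Varadhan entropy (dual) representation of $S(\zb,\wb)$ to produce a pair $(\eta,\psi)\in\mathcal S$ whose entropy sum is within a prescribed error of $S$, exploit the continuity in $t$ of $\rb$, $\yb$, $n_\nu(t,\cdot)$ and $q_{ij}$ to freeze these choices on a fine subdivision of each $[t_m,t_{m+1}]$ (the paper does this via a pointwise neighborhood argument plus Heine--Borel, which is equivalent to your uniform-continuity mesh), and finally pass from $\yb$ back to $\rb$ using the Lipschitz regularity of $\log q_{ij}$, which the paper packages as uniform-in-$\wb$ absolute continuity of $S(\cdot,\wb)$ together with $\rho_c^{(1)}(\yb,\rb)$ small for $J$ large.

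Two points in your write-up need repair. First, and most importantly, the step you describe as ``nudge $\psi^{mk}$ slightly toward the center of $\Delta_D$ at arbitrarily small cost'' is exactly the nontrivial content that the paper does not reprove but imports from Lemma 8.61 of Shwartz--Weiss: when $n_\nu(t_{mk},\cdot)$ lies on $\partial\Delta_D$ the exact identity you invoke is vacuous (no strictly positive $\eta$ admits a boundary $\psi$ as invariant vector, so the constraint set in your infimum is empty), and after perturbing $\psi$ into $\Delta_D^\circ$ you must still argue that the constrained infimum stays within $\epsilon'$ of $S(\zb,n_\nu(t_{mk},\cdot))$ --- i.e.\ you need continuity of $S(\zb,\cdot)$ (or of the dual infimum) up to $\partial\Delta_D$, together with the existence of a near-minimizing $\eta$ with entries bounded above and away from zero. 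This is provable (e.g.\ $S(\zb,\cdot)$ is convex, bounded and lower semicontinuous on the simplex, hence continuous there), but as written it is asserted rather than established, and it is the one place where the paper leans on a cited lemma precisely because the claim is not automatic. Second, your assertion $\yb(t_{mk})=\rb(t_{mk})$ is false in general: $\yb$ agrees with $\rb$ only at the coarse nodes $t_m$, not at the refined points $t_{mk}$. This is harmless --- the replacement error is controlled by the Lipschitz constant of $\log q_{ij}$ times $\|\yb-\rb\|_\infty$ plus the mesh oscillation of $\rb$, and $\|\yb-\rb\|_\infty$ is made small by taking $J$ large, which is how the paper phrases it --- but the justification should go through the sup-norm distance, not through a claimed equality at the subdivision points.
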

The proof of  Lemma \ref{up 1} can be found in the Appendix.

We then define the measure $\pi\in \mathbb M_L[0,T]$ such that $\pi(dt,i)=n_{\pi}(t,i)dt$ and
$$n_\pi(t,i):= \psi^{mk}_i,\quad t \in [t_{mk},t_{m,k+1})$$
for $m=0,1,\cdots,J-1$ and $k=0,1,\cdots,K_m-1$. With this choice $n_{\pi}(t,\cdot)$ is piecewise constant and
$$\rho^{(2)}(\pi,\nu)<\epsilon/4.$$

We take the frequently used notation $\lambda^{\pi}_{i}$ in later text as the expectation of $\lambda_{i}$ with respect to the distribution $n_{\pi}$
\begin{equation}
\lambda^{\pi}_{i}(\yb(s)) = \sum_{j=1}^D\lambda_i(\yb(s),j)n_{\pi}(s,j).
\end{equation}

\begin{lem}\label{up 2}
For any $\epsilon>0$ and  large enough $J$, define $\betab_m = \Delta r_m/\Delta$, then there exists  $\mub^m\in K_{\betab_m}$ such that
\begin{align*}
&\sum_{m=0}^{J-1}\int_{t_m}^{t_{m+1}}\sum_{i=1}^S\Big(\lambda^{\pi}_{i}(\yb(t))-\mu_i^m+\mu_i^m\log \frac{\mu_i^m}{\lambda^{\pi}_{i}(\yb(t))}\Big)dt  \\
& \le I_s(\rb,\nu)+\epsilon.
\end{align*}
\end{lem}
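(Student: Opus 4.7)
The plan rests on the classical Legendre--Fenchel identity
\begin{equation*}
L_s(\zb,\betab,\wb)=\inf_{\mub\in\mathcal{K}_{\betab}}\Psi(\mub;\zb,\wb),\quad \Psi(\mub;\zb,\wb):=\sum_{i=1}^{S}\!\left(\lambda_i^{\wb}(\zb)-\mu_i+\mu_i\log\frac{\mu_i}{\lambda_i^{\wb}(\zb)}\right),
\end{equation*}
valid whenever $\betab\in\mathcal{C}$, where $\lambda_i^{\wb}(\zb):=\sum_j\lambda_i(\zb,j)w_j$ and $0\log 0:=0$. This is a direct Legendre conjugation of $\pb\mapsto\sum_i\lambda_i^{\wb}(\zb)(e^{\langle\pb,\ub_i\rangle}-1)$. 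I may assume $I_s(\rb,\nu)<\infty$, else the claim is vacuous; by a preliminary mollification of $\rb$ I further assume $\dot\rb\in L^\infty$, the correction being absorbed into $\epsilon$.

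For each $m$, fix a reference time $t_m^\ast=(t_m+t_{m+1})/2$ and choose $\mub^m\in\mathcal{K}_{\betab_m}$ as a near-infimizer at $(\yb(t_m^\ast),n_\pi(t_m^\ast,\cdot))$, so that
\begin{equation*}
\Psi(\mub^m;\yb(t_m^\ast),n_\pi(t_m^\ast,\cdot))\le L_s(\yb(t_m^\ast),\betab_m,n_\pi(t_m^\ast,\cdot))+\epsilon/(3T).
\end{equation*}
Assumption \ref{ass:main}(1) gives $\lambda_i^{\wb}\in[\Lambda^{-1},\Lambda]$ uniformly, and combined with the boundedness of $\betab_m$ this forces $\{\mub^m\}_m$ into a common bounded subset of $\mathbb{R}^S$, making $\Psi(\mub^m;\cdot,\cdot)$ uniformly continuous in $(\zb,\wb)$. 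Using $\|\yb(t)-\yb(t_m^\ast)\|\le\Delta\|\betab_m\|$, the bound $\|n_\pi(t,\cdot)-n_\nu(t,\cdot)\|\le\epsilon/(4DT)$ from \eqref{eq:PsiNnuDiff}, and the assumed continuity of $n_\nu$, I then obtain
\begin{equation*}
\int_{t_m}^{t_{m+1}}\!\Psi(\mub^m;\yb(t),n_\pi(t,\cdot))dt\le \Delta\,L_s(\yb(t_m^\ast),\betab_m,n_\pi(t_m^\ast,\cdot))+\tfrac{\epsilon\Delta}{3T}+o(\Delta).
\end{equation*}
Next, convexity of $L_s$ in the slope argument and $\betab_m=\Delta^{-1}\int_{t_m}^{t_{m+1}}\dot\rb(t)dt$ give, by Jensen,
\begin{equation*}
\Delta\,L_s(\yb(t_m^\ast),\betab_m,n_\pi(t_m^\ast,\cdot))\le\int_{t_m}^{t_{m+1}}\!L_s(\yb(t_m^\ast),\dot\rb(t),n_\pi(t_m^\ast,\cdot))dt.
\end{equation*}
A second uniform-continuity argument replaces $\yb(t_m^\ast)\!\to\!\rb(t)$ and $n_\pi(t_m^\ast,\cdot)\!\to\!n_\nu(t,\cdot)$ up to a further $o(\Delta)$ per interval. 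Summing over $m$ and taking $J$ large enough that all accumulated $o(\Delta)$ errors are dominated by $\epsilon/3$ yields the claim.

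\textbf{Main obstacle.} The crux is maintaining the uniform $\mathbb{R}^S$-bound on $\{\mub^m\}$: without it the term $\mu_i^m\log\mu_i^m/\lambda_i^\pi(\yb(t))$ fails to be equicontinuous in $t$ and the second-paragraph estimate collapses. Securing the bound requires the preliminary $L^\infty$-mollification of $\dot\rb$, together with coercivity of $\Psi(\cdot;\zb,\wb)$ in $\mub$ inherited from the entropy term $\mu_i\log\mu_i$, controlled via the standing assumption $I_s(\rb,\nu)<\infty$. The difficulty of tying a single interval-constant flux $\mub^m$ to the $t$-dependent prefactor $\lambda_i^\pi(\yb(\cdot))$ is precisely the feature that distinguishes the jump-process setting of the present paper from the diffusion framework of \cite{Liptser1996}.
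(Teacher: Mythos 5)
Your overall skeleton is essentially the paper's: the identity $L_s(\zb,\betab,\wb)=\inf_{\mub\in\mathcal K_{\betab}}\Psi(\mub;\zb,\wb)$ (the paper gets it from Theorem 5.26 of \cite{Shwartz1995}), freezing arguments on each subinterval, Jensen in the slope variable, and choosing near-minimizing fluxes $\mub^m$. The genuine gap is the opening reduction: ``by a preliminary mollification of $\rb$ I further assume $\dot\rb\in L^\infty$, the correction being absorbed into $\epsilon$.'' This is not a harmless normalization, and it is exactly the hard part of the lemma. The statement concerns the actual path $\rb$: the increments $\betab_m=\Delta \rb_m/\Delta$, the interpolant $\yb$, and the constraint sets $\mathcal K_{\betab_m}$ are all built from $\rb$ and are used downstream to construct $\bar\zb_n$ in the change-of-measure argument, so you cannot simply prove the estimate for a mollified surrogate $\rb^\delta$. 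To ``absorb the correction into $\epsilon$'' you would need (i) an upper estimate $I_s(\rb^\delta,\nu)\le I_s(\rb,\nu)+\epsilon$ along the mollification (only lower semicontinuity comes for free, and the upper bound again requires controlling the set where $\dot\rb$ is large), and (ii) a way to convert fluxes in $\mathcal K_{\betab^\delta_m}$ into fluxes in $\mathcal K_{\betab_m}$ with controlled change of the objective, since the constraint is the affine condition $\sum_i\mu_i\ub_i=\betab_m$. Neither point is addressed, and both are of the same order of difficulty as the step being avoided; as you yourself observe, without a bound on $\{\mub^m\}$ uniform in $J$ (equivalently, on $\dot\rb$) the uniform-continuity estimates in your second paragraph collapse.

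The paper does not bypass this by smoothing $\rb$; it keeps the original path and controls the unbounded-slope set directly. It uses the two-sided growth bound $M_1\|\betab\|\log\|\betab\|\le \tilde L_s(\zb,\betab,\wb)\le M_2\|\betab\|\log\|\betab\|$ for $\betab\in\mathcal C$ with $\|\betab\|$ large (Lemmas 5.17 and 5.32 of \cite{Shwartz1995}, with $\lambda_i$ replaced by $\sum_j\lambda_i(\cdot,j)w_j$) together with Lemma \ref{uniformly absolute continuity} to show that the contribution of $\{t:\|\dot\rb(t)\|\ge B\}$ to all the relevant integrals is an $\epsilon(B)$ that vanishes as $B\to\infty$ (estimate \eqref{ge B}); only on the complementary bounded-slope set does it run the Lipschitz-type continuity estimate \eqref{le B}, whose constants depend on the slope bound and on the bounded representatives of $\mathcal K_{\betab}$ supplied by Lemma 5.20 of \cite{Shwartz1995}. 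If you replace your mollification step by this splitting, the rest of your argument (near-minimizer, Jensen, unfreezing $\yb(t_m^\ast)\to\yb(t)$ and $n_\pi\to n_\nu$ using \eqref{eq:PsiNnuDiff} and the continuity of $n_\nu$) goes through and coincides with the paper's proof; as written, the proposal assumes away the key difficulty.
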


The proof of Lemma \ref{up 2} can be found in the Appendix.

With the constructed matrices $\{\eta^{mk}\}$ in Lemma \ref{up 1}, we define the process $\bar\xi_n$ with jump rate $n\eta_{ij}(t)$ where $\eta_{ij}(t)=\eta_{ij}^{mk},~t\in[t_{mk},t_{m,k+1})$. Similarly, we take $\mub^m$ constructed from Lemma \ref{up 2} and define $\bar \zb_n$
with jump rate
$$n\mu_i(t)\frac{\lambda_{i}(\bar \zb_n(t),\bar\xi_n(t))}{\lambda^{\pi}_{i}(\yb(t))}$$
for its $i$th component, where $\mu_i(t)$ is piecewise constant and  $\mu_i(t)=\mu_i^m$ for $t\in [t_m,t_{m+1})$.

We have the following convergence result for the constructed approximations for $\pi$ and $\yb$.
\begin{lem}\label{covergence of tilde v} Convergence of the approximation $\bar\nu_{n}$
 $$ \mathbb{P}-\lim_{n \to \infty}\rho^{(2)}(\bar \nu^n,\pi)=0.$$
\end{lem}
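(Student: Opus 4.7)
The plan is to reduce the claim to the classical time-average ergodic theorem for a fast, irreducible continuous-time Markov chain on the finite state space $\mathbb{Z}_D$. On each subinterval $I_{mk}:=[t_{mk},t_{m,k+1})$, the process $\bar\xi_n$ is time-homogeneous with rate matrix $n\eta^{mk}$, and the defining condition of $\mathcal S$ (that $\sum_i \psi_i^{mk}\sum_j\eta_{ij}^{mk}\eb_{ij}=\boldsymbol 0$) is precisely the stationarity equation $\psi^{mk} Q^{mk}=0$ for the off-diagonal rate matrix $Q^{mk}=(\eta_{ij}^{mk})_{i\neq j}$. Hence $\psi^{mk}$ is the unique invariant distribution of this chain, with irreducibility guaranteed by $\eta^{mk}_{ij}>0$. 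Since by construction the marginal density of $\pi$ on $I_{mk}$ is exactly $\psi^{mk}$, the convergence $\rho^{(2)}(\bar\nu_n,\pi)\to 0$ should follow from the time-averaging behaviour of this fast-switching chain on each piece.

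First I would recall that $\rho^{(2)}$ metrizes weak convergence of finite measures on $[0,T]\times\mathbb{Z}_D$, and since $\mathbb{Z}_D$ is finite it suffices to show that for every $i\in\mathbb{Z}_D$ and every $f\in C([0,T])$,
\[ \int_0^T f(t)\,{\bf 1}_{\{\bar\xi_n(t)=i\}}\,dt \;\xrightarrow{\mathbb P}\; \sum_{m,k}\psi_i^{mk}\int_{I_{mk}}f(t)\,dt. \]
Splitting both sides along the subdivision $\{I_{mk}\}$ and using the uniform continuity of $f$ to replace $f(t)$ by $f(t_{mk})$ on each piece (with an error controlled by the mesh, uniformly in $n$), the claim reduces to showing that for each fixed $(m,k)$ and each $i\in\mathbb{Z}_D$,
\[ \frac{1}{|I_{mk}|}\int_{I_{mk}}{\bf 1}_{\{\bar\xi_n(t)=i\}}\,dt \;\xrightarrow{\mathbb P}\; \psi_i^{mk}. \]

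The last display is a quantitative law of large numbers for a finite-state irreducible CTMC run on a fast time scale: after rescaling $s=nt$, $\bar\xi_n$ restricted to $I_{mk}$ becomes the unit-rate chain with rate matrix $Q^{mk}$ on the long interval $[0,n|I_{mk}|]$, and $n|I_{mk}|\to\infty$. Solving the Poisson equation $Q^{mk}h={\bf 1}_i-\psi_i^{mk}\boldsymbol 1$ (which admits a bounded solution by irreducibility on the finite state space) and applying Dynkin's formula yield an $L^2$ estimate of order $1/(n|I_{mk}|)$ for the occupation-time fluctuation, from which convergence in probability follows. Summing the finitely many vanishing errors over $\{I_{mk}\}$ then completes the proof. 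The main obstacle I anticipate is keeping the constants in the $L^2$ ergodic estimate uniform across the subdivision and ensuring the mesh-refinement error from approximating $f(t)$ by $f(t_{mk})$ is interchanged cleanly with the $n\to\infty$ limit; the probabilistic heart of the argument, while classical, is where the care is required.
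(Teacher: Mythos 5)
Your overall strategy is the paper's: reduce $\rho^{(2)}$-convergence to convergence of integrals of test functions against the occupation measure, observe that on each $I_{mk}=[t_{mk},t_{m,k+1})$ the chain $\bar\xi_n$ is time-homogeneous with rates $n\eta^{mk}_{ij}$ whose (unique, by $\eta^{mk}_{ij}>0$) stationary law is exactly $\psi^{mk}=n_\pi(t,\cdot)$ because the constraint defining $\mathcal S$ is the global balance equation, and then invoke a law of large numbers for the occupation time of the fast chain. The paper justifies that last step by simply citing ergodicity (a.s.\ and $L^1$ convergence of time averages), whereas you propose a Poisson-equation/Dynkin $L^2$ estimate of order $1/(n\,\cdot\,\mathrm{length})$; that is a legitimate, in fact more quantitative, substitute and is not a point of concern.

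There is, however, a gap in your reduction step. You replace $f(t)$ by $f(t_{mk})$ on each $I_{mk}$ ``with an error controlled by the mesh,'' but the partition $\{I_{mk}\}$ is \emph{fixed} by the construction of $\pi$ (it comes from Lemma \ref{up 1} for a given $\epsilon$), so its mesh does not tend to zero and this error does not vanish; moreover, the statement you reduce to, namely $|I_{mk}|^{-1}\int_{I_{mk}}\chi_{\{\bar\xi_n(t)=i\}}dt\to\psi^{mk}_i$, is by itself too weak to control $\int_{I_{mk}}f(t)\chi_{\{\bar\xi_n(t)=i\}}dt$ for non-constant $f$ (a priori the chain could realize the correct average while distributing its occupation unevenly inside $I_{mk}$). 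The repair is exactly the paper's extra device: after fixing $f$ and its uniform-continuity scale $\delta$, introduce a further auxiliary partition of each $I_{mk}$ into pieces $[\tau_l,\tau_{l+1})$ of length $\tilde\delta<\delta$ (this is harmless because the rates, hence $\psi^{mk}$, are constant throughout $I_{mk}$), replace $f$ by $f(\tau_l)$ there, and apply your occupation-time LLN on each small piece, whose effective horizon $n\tilde\delta$ still diverges as $n\to\infty$; your $L^2$ bound applies verbatim on these subintervals and the finitely many errors sum to something of order $\epsilon$. With that modification your argument coincides with the paper's proof.
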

\begin{lem}\label{covergence of tilde z} Convergence of the approximation $\bar\zb_{n}$
$$\mathbb{P}-\lim_{n \to \infty}\rho_c^{(1)}(\bar \zb^n,\yb)=0.$$
\end{lem}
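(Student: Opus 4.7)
The plan is to prove $\sup_{t\in[0,T]}\|\bar\zb_n(t)-\yb(t)\|\to 0$ in probability by combining a Doob decomposition with Gronwall's inequality. Since $\bar\xi_n$ is an autonomous time-inhomogeneous Markov chain with deterministic rates $n\eta_{ij}(t)$ independent of $\bar\zb_n$, the coupling is one-way, which simplifies matters. Using the random time-change representation of Poisson processes, one has
\begin{equation*}
\bar\zb_n(t)-\yb(t)=M_n(t)+\sum_{i=1}^S\ub_i\int_0^t\frac{\mu_i(s)}{\lambda_i^\pi(\yb(s))}\bigl[\lambda_i(\bar\zb_n(s),\bar\xi_n(s))-\lambda_i^\pi(\yb(s))\bigr]ds,
\end{equation*}
where $M_n(t)=\sum_{i=1}^S\frac{\ub_i}{n}\bigl[P_i(A_i^n(t))-A_i^n(t)\bigr]$ with $A_i^n(t)=n\int_0^t\mu_i(s)\lambda_i(\bar\zb_n(s),\bar\xi_n(s))/\lambda_i^\pi(\yb(s))\,ds$, invoking $\dot\yb(t)=\sum_i\mu_i(t)\ub_i$ and $\bar\zb_n(0)=\yb(0)=\zb^0$.

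The martingale $M_n$ is controlled by the bound $A_i^n(T)=O(n)$ provided by \eqref{eq:LambdaLUBound} and the boundedness of $\mu_i$; the compensated Poisson $P_i(A_i^n(\cdot))-A_i^n(\cdot)$ has quadratic variation $O(n)$, so $\mathbb{E}\sup_{t\le T}\|M_n(t)\|^2=O(1/n)$ by Doob's maximal inequality. In the drift error, split $\lambda_i(\bar\zb_n,\bar\xi_n)-\lambda_i^\pi(\yb)=[\lambda_i(\bar\zb_n,\bar\xi_n)-\lambda_i(\yb,\bar\xi_n)]+[\lambda_i(\yb,\bar\xi_n)-\lambda_i^\pi(\yb)]$. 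The Lipschitz continuity of $\lambda_i$ in $\zb$, together with the uniform lower bound $\lambda_i^\pi(\yb)\ge 1/\Lambda$, turns the first piece into a contribution of the form $C\int_0^t\|\bar\zb_n(s)-\yb(s)\|\,ds$, which is ready for a Gronwall argument.

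The remaining piece is the averaging error $R_n^i(t)=\int_{[0,t]\times\mathbb{Z}_D}f_i(s,j)\,d(\bar\nu_n-\pi)$ with bounded, piecewise-continuous integrand $f_i(s,j)=\mu_i(s)\lambda_i(\yb(s),j)/\lambda_i^\pi(\yb(s))$ (the discontinuities occur only at the finite grid $\{t_{mk}\}$ where $\mu_i$ and $n_\pi$ jump). By Lemma \ref{covergence of tilde v} the occupation measure $\bar\nu_n$ converges to $\pi$ in the L\'evy-Prohorov metric in probability, hence weakly; since $\pi$ has a Lebesgue time-marginal (no atoms), an approximation of $\mathbf{1}_{[0,t]}$ by continuous cut-offs on the union of $\{t_{mk}\}$ and a refining grid upgrades the weak convergence to $\sup_{t\le T}|R_n^i(t)|\to 0$ in probability. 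Assembling everything yields
\begin{equation*}
\sup_{s\le t}\|\bar\zb_n(s)-\yb(s)\|\le\eta_n+C\int_0^t\sup_{u\le s}\|\bar\zb_n(u)-\yb(u)\|\,ds,
\end{equation*}
with $\eta_n\to 0$ in probability, and Gronwall's inequality closes the estimate. The main obstacle is precisely this uniform-in-$t$ upgrade of the L\'evy-Prohorov convergence supplied by Lemma \ref{covergence of tilde v}; the rest reduces to a routine martingale-plus-Gronwall computation once the one-way coupling is exploited.
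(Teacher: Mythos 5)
Your proposal is correct, and its skeleton coincides with the paper's: write $\bar\zb_n-\yb$ as a martingale plus a drift error, split the drift error into a Lipschitz-in-$\zb$ piece (feeding Gronwall) and an averaging error controlled by Lemma \ref{covergence of tilde v}, then close with Gronwall's inequality. The genuine difference is in how the martingale part is handled. The paper builds the mean-one exponential martingale associated with the generator of $\bar\zb_n$ and runs the same Chernoff--Doob argument as in Lemma \ref{lemma1}, obtaining the bound $\exp(-n\epsilon c_1\log(\epsilon c_2))$ in \eqref{tilde z and y}; this is exponentially sharp (and reuses machinery already set up for the upper bound), though only convergence in probability is needed for the lemma. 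You instead use the random time-change representation and bound the compensated Poisson term in $L^2$ via Doob's maximal inequality, getting $\mathbb{E}\sup_{t\le T}\|M_n(t)\|^2=O(1/n)$ — more elementary, perfectly sufficient here, but it would not recover the exponential decay rate the paper's estimate provides. Your treatment of the averaging error is also slightly more careful than the paper's: where the paper simply asserts in \eqref{eq:Fn} that $B_n\to 0$ from Lemma \ref{covergence of tilde v}, you spell out the upgrade from L\'evy--Prohorov (weak) convergence to a $\sup_t$ statement, which is legitimate because the integrand is bounded with discontinuities confined to the finite grid $\{t_{mk}\}$ (a $\pi$-null set, since $\pi$ has Lebesgue time-marginal) and because $t\mapsto\int_{[0,t]}f_i\,d(\bar\nu_n-\pi)$ is equi-Lipschitz, so pointwise convergence on a finite grid suffices. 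Your observation that the coupling is one-way (the rates $n\eta_{ij}(t)$ of $\bar\xi_n$ do not depend on $\bar\zb_n$) is accurate and consistent with the paper's construction, though neither proof strictly needs it beyond making the representation clean.
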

The proof of Lemmas  \ref{covergence of tilde v}, \ref{covergence of tilde z} will be given in the Appendix.

As we have finished the construction of $\bar \zb_n$ and $\bar \xi_n$, we now perform the change of measure. Denote $\mathbb{Q}_{n}$ and $\bar{\mathbb{Q}}_{n}$ the distributions of $(\zb_n(t),\xi_n(t))_{t\le T}$ and  $(\bar \zb_n(t),\bar \xi_n(t))_{t\le T}$, respectively. We have
\begin{eqnarray}\notag
&&\frac{d\mathbb{Q}_n}{d\bar{\mathbb{Q}}_n}(\bar \zb_n,\bar \xi_n)\\ \notag
&=&\exp\left\{-\int_0^Tn\sum_{i=1}^d\left(\lambda_i(\bar \zb_n(t),\bar \xi_n(t))-{\mu_i(t)\frac{{{\lambda _i}({{\bar z}_n}(t),{\bar \xi_n}(t))}}{\lambda^{{\pi}}_{i}(\yb(t))}}\right)\right.dt\\ \notag
&&-\int_0^T\sum_i \log \frac{{{\mu _i(t^-)}}}{{\lambda^{\pi}_{i}(\yb(t^{-}))}}dY_t^i-\int_0^Tn\sum_{i,j=1}^D\Big(q_{ij}(\bar \zb_n(t))-\eta_{ij}(t)\Big)dt\\\notag
&&-\left.\int_0^T\sum_{i,j} \log \frac{\eta_{ij}(t^-)}{q_{ij}(\bar \zb_n(t^-))}dM_t^{ij}
\right\}\\\label{formula}
& := &e^{B(\bar \zb_n,\bar \xi_n)},
\end{eqnarray}
where $Y_t^i$ is the counting process induced by $\bar \zb_n(t)$ that will increase by one each time when a jump occurs in the $\ub_i$ direction and $M_t^{ij}$ is the counting process induced by $\bar \xi_n(t)$ that will increase by one each time when
a jump occurs from state $i$ to state $j$. The next lemma shows that the expectation of $B(\bar \zb_n,\bar \xi_n)$ in the exponent becomes simple in the limit $n\rightarrow\infty$.

%\exp\left\{-\int_0^Tn\sum_{i=1}^d\left(\lambda_i(\bar \zb_n(t),\bar \xi_n(t))-{\mu_i(t)\frac{{{\lambda \pi_i}({{\bar z}_n}(t),{\bar \xi_n}(t))}}{\lambda^{{\pi}}_{i}(\yb(t))}}\right)\right.dt

\begin{lem}\label{section 7 of Adam's book}
\begin{align}\notag
&\lim_{n\to \infty}\mathbb{E}_{\bar{\mathbb{Q}}_n}\int_0^T\lambda_i(\bar \zb_n(t),\bar \xi_n(t))-{\mu_i(t)\frac{{{\lambda_i}({{\bar z}_n}(t),{\bar \xi_n}(t))}}{\lambda^{{\pi}}_{i}(\yb(t))}}dt\\\label{convergence1}
=&\sum_{m=0}^{J-1}\int_{t_{m}}^{t_{m+1}}\lambda^{{\pi}}_{i}(\yb(t))-\mu_i^mdt.
\end{align}
\begin{align}\notag
&\lim_{n\to \infty}\frac1n\mathbb{E}_{\bar{\mathbb{Q}}_n}\int_0^T\sum_i \log \frac{{{\mu_i(t^-)}}}{\lambda^{{\pi}}_{i}(\yb(t))}dY_t^i \\\label{convergence2}
=&\sum_{m=0}^{J-1}\int_{t_{m}}^{t_{m+1}}\sum_{i=1}^S\mu_i^m \log \frac{\mu_i^m}{\lambda^{{\pi}}_{i}(\yb(t))}dt.
\end{align}
\begin{align}\notag
&\lim_{n\to \infty}\frac{1}{n}\mathbb{E}_{\bar{\mathbb{Q}}_n}\int_0^T\sum_{i,j} \log \frac{\eta_{ij}(t^-)}{q_{ij}(\bar \zb_n(t^-))}dM_t^{ij} \\\label{convergence3}
=&\sum_{m=0}^{J-1}\sum_{k=0}^{K_m-1}\int_{t_{mk}}^{t_{m,k+1}}\sum_{i=1}^D n_{\pi}(t,i)\sum_{j=1}^D{\eta_{ij}^{mk}}\log \frac{{\eta_{ij}^{mk}}}{q_{ij}(\yb(t))}dt.
\end{align}
\end{lem}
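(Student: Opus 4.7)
The plan is to handle the three convergences uniformly by first using Dynkin's formula to replace the counting-process integrals on the left of \eqref{convergence2} and \eqref{convergence3} with Lebesgue integrals against their $\bar{\mathbb Q}_n$-predictable intensities, then rewriting all three expectations in terms of the occupation measure of $\bar\xi_n$, and finally passing to the limit using Lemmas~\ref{covergence of tilde v} and \ref{covergence of tilde z} together with the global bounds of Assumption~\ref{ass:main}.

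\textbf{Step 1: compensation.} Under $\bar{\mathbb Q}_n$ the counting process $Y^i_t$ has predictable intensity $n\mu_i(t)\lambda_i(\bar\zb_n(t),\bar\xi_n(t))/\lambda^{\pi}_i(\yb(t))$, and $M^{ij}_t$ has predictable intensity $n\eta_{ij}(t)\mathbf{1}_{\bar\xi_n(t)=i}$. For any bounded predictable $f$ the identity $\mathbb E_{\bar{\mathbb Q}_n}\int_0^T f(t^-)\,dN_t=\mathbb E_{\bar{\mathbb Q}_n}\int_0^T f(t)\lambda^N(t)\,dt$ turns the LHS of \eqref{convergence2} into
$$\sum_{i=1}^S\mathbb E_{\bar{\mathbb Q}_n}\int_0^T\log\frac{\mu_i(t)}{\lambda^{\pi}_i(\yb(t))}\cdot\mu_i(t)\frac{\lambda_i(\bar\zb_n(t),\bar\xi_n(t))}{\lambda^{\pi}_i(\yb(t))}\,dt,$$
and the LHS of \eqref{convergence3} into
$$\sum_{i,j=1}^D\mathbb E_{\bar{\mathbb Q}_n}\int_0^T\log\frac{\eta_{ij}(t)}{q_{ij}(\bar\zb_n(t))}\cdot\eta_{ij}(t)\mathbf{1}_{\bar\xi_n(t)=i}\,dt.$$
The LHS of \eqref{convergence1} is already in this pathwise form.

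\textbf{Step 2: pass to the limit via the occupation measure.} Each resulting integrand has the shape $g_n(t,\bar\zb_n(t))h(t,\bar\xi_n(t))$, where $g_n$ is uniformly bounded and Lipschitz in $\zb$ (by Assumption~\ref{ass:main}) and $h$ is bounded. Rewriting the time integral against $\bar\nu_n$ gives
$$\int_0^T g_n(t,\bar\zb_n(t))h(t,\bar\xi_n(t))\,dt=\sum_{j=1}^D\int_0^T g_n(t,\bar\zb_n(t))h(t,j)\,n_{\bar\nu_n}(t,j)\,dt.$$
By Lemma~\ref{covergence of tilde z} one can replace $\bar\zb_n(t)$ by $\yb(t)$ with an error that is $o(1)$ uniformly in $t$, thanks to the Lipschitz continuity of $g_n$ in $\zb$; this yields a deterministic test function which is piecewise Lipschitz in $t$. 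By Lemma~\ref{covergence of tilde v} the resulting integral against $n_{\bar\nu_n}$ then converges in probability to the integral against $n_\pi$, since $\pi$ has no atoms and the only discontinuities of the test functions lie at the finite subdivision points of $[0,T]$. Since all integrands are uniformly bounded by a deterministic constant depending only on $\Lambda$, $J$, $\mu^m$, $\eta^{mk}$, and $\|\log q_{ij}\|_\infty$ (using $\lambda^{\pi}_i(\yb(t))\ge 1/\Lambda$ by \eqref{eq:LambdaLUBound}), the bounded convergence theorem upgrades these in-probability limits to expectation limits.

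\textbf{Step 3: cancellations yielding the stated forms.} In \eqref{convergence1} and \eqref{convergence2} the identity $\sum_{j}n_\pi(t,j)\lambda_i(\yb(t),j)=\lambda^{\pi}_i(\yb(t))$ produces the key cancellation: in \eqref{convergence1} it turns $\mu_i(t)\lambda^{\pi}_i(\yb(t))/\lambda^{\pi}_i(\yb(t))=\mu_i(t)$, and analogously in \eqref{convergence2}. Evaluating the piecewise-constant factors $\mu_i(t)=\mu_i^m$ on $[t_m,t_{m+1})$ and $\eta_{ij}(t)=\eta_{ij}^{mk}$ on $[t_{mk},t_{m,k+1})$, and using $n_\pi(t,i)=\psi^{mk}_i$ on the corresponding subinterval, recovers the three right-hand sides.

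\textbf{Main obstacle.} The delicate point is that $\bar\xi_n$ does \emph{not} converge pathwise, only its occupation measure does; accordingly, one cannot treat $\lambda_i(\bar\zb_n(t),\bar\xi_n(t))$ as a continuous functional of a converging process and pass to the limit pointwise in $t$. The remedy is the decoupling performed in Step 2: use the uniform convergence $\bar\zb_n\to\yb$ first, to manufacture a deterministic bounded test function of $(t,j)$ alone, and only then invoke weak convergence $\bar\nu_n\to\pi$ against it. The boundedness of propensities and jump rates on the whole space built into Assumption~\ref{ass:main} is what legitimizes this swap at the level of expectations and makes the bounded convergence argument go through.
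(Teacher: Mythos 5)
Your proposal is correct, but it reaches the three limits by a genuinely different route than the paper. For \eqref{convergence2} and \eqref{convergence3} the paper first treats the case of constant $\lambda_i$ and $q_{ij}$, where the numbers of jumps of $\bar \zb_n$ and $\bar \xi_n$ on each subinterval are Poisson with explicit means, computes the expectations of the counting integrals directly, and then handles general rates by subdividing time and approximating by Riemann sums as in its proof of \eqref{convergence1}; you instead compensate once and for all, using the predictable intensities $n\mu_i(t)\lambda_i(\bar\zb_n(t),\bar\xi_n(t))/\lambda^{\pi}_i(\yb(t))$ and $n\eta_{ij}(t)\mathbf 1_{\{\bar\xi_n(t)=i\}}$ to turn both counting integrals into Lebesgue integrals, after which all three statements become instances of one limit theorem for bounded integrands of the form $g_n(t,\bar\zb_n(t))h(t,\bar\xi_n(t))$. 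Your limit-passing step is in the same spirit as the paper's treatment of \eqref{convergence1} (split on the event $\{\bar\zb_n\in N_\epsilon(\yb)\}$, use Lipschitz bounds from Assumption \ref{ass:main}, then average over the fast variable), but you invoke Lemma \ref{covergence of tilde v} as a black box where the paper re-runs the ergodicity/Riemann-sum argument by hand; this buys a shorter, unified proof at the cost of having to note that the deterministic test functions are only piecewise continuous in $t$, which you correctly dispose of since $\pi$ has Lebesgue time marginals. Two small points of hygiene: the claim that $\bar\zb_n(t)$ can be replaced by $\yb(t)$ ``with an error $o(1)$ uniformly in $t$'' holds only on an event of probability tending to one, not pathwise, but your final bounded-convergence step (uniform bounds from \eqref{eq:LambdaLUBound} and $\eta_{ij}^{mk}>0$, plus convergence in probability) already covers exactly this, so no gap results; and, as in the paper's own statement, the case $\mu_i^m=0$ requires the convention $0\log 0=0$, under which both the compensated integrand and the right-hand side of \eqref{convergence2} vanish consistently.
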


 The proof of Lemma \ref{section 7 of Adam's book} is based on the ideas in proving Lemma 5.52 and Lemma 8.70 in \cite{Shwartz1995}.
\begin{proof}
Since $\mu_i(t)$ is a step function and constant in $[t_{m},t_{m+1})$, to prove \eqref{convergence1}, we just need to prove for each $m$
\begin{align}
&\lim_{n\to \infty}\mathbb{E}_{\bar{\mathbb{Q}}_n}\int_{t_{m}}^{t_{m+1}}\lambda_i(\bar \zb_n(t),\bar \xi_n(t))-\mu_i^m\frac{{{\lambda_i}({{\bar z}_n}(t),{\bar \xi_n}(t))}}{\lambda^{{\pi}}_{i}(\yb(t))}dt\notag\\
=&\int_{t_{m}}^{t_{m+1}}\lambda^{{\pi}}_{i}(\yb(t))-\mu_i^mdt.
\end{align}
Define
$$N_\epsilon(\yb):=\{\zb \in\mathbb D^d[0,T]: \rho_c^{(1)}(\zb,\yb)\le \epsilon \}.$$
We have
\begin{align}\notag
&\mathbb{E}_{\bar{\mathbb{Q}}_n}\int_{t_{m}}^{t_{m+1}}\lambda_i(\bar{\zb}_n(t),\bar \xi_n(t))-\mu_i^m\frac{{{\lambda_i}({{\bar \zb}_n}(t),{\bar \xi_n}(t))}}{\lambda^{{\pi}}_{i}(\yb(t))}dt\\\notag
=&\mathbb{E}_{\bar{\mathbb{Q}}_n} \chi_{\left\{\bar \zb_n\in N_\epsilon(\yb)\right\}} \int_{t_{m}}^{t_{m+1}}\lambda_i(\bar \zb_n(t),\bar \xi_n(t))-\mu_i^m\frac{{{\lambda_i}({{\bar \zb}_n}(t),{\bar \xi_n}(t))}}{\lambda^{{\pi}}_{i}(\yb(t))}dt\\ \label{two terms}
&+\mathbb{E}_{\bar{\mathbb{Q}}_n} \chi_{\left\{\bar \zb_n\notin N_\epsilon(\yb)\right\}} \int_{t_{m}}^{t_{m+1}}\lambda_i(\bar \zb_n(t),\bar \xi_n(t))-\mu_i^m\frac{{{\lambda_i}({{\bar \zb}_n}(t),{\bar \xi_n}(t))}}{\lambda^{{\pi}}_{i}(\yb(t))}dt.
\end{align}
By Lemma \ref{covergence of tilde z}, the second term on the right hand side of \eqref{two terms} tends to zero as $n \to \infty$. Next let us estimate the first term.

By Assumption \ref{ass:main}, we have
\begin{align}
\left|\frac{\lambda_i(\xb',j)}{\lambda^{{\pi}}_{i}(\xb)}-\frac{\lambda_i(\zb,j)}{\lambda^{{\pi}}_{i}(\zb)}\right|&\le \left|\frac{\lambda_i(\xb',j)}{\lambda^{{\pi}}_{i}(\xb)}-\frac{\lambda_i(\xb',j)}{\lambda^{{\pi}}_{i}(\zb)}\right|+\left|\frac{\lambda_i(\xb',j)-\lambda_i(\zb,j)}{\lambda^{{\pi}}_{i}(\zb)}\right|\notag\\
&\le \Lambda^3L \|\xb-\zb\|+\Lambda L\|\xb'-\zb\| \label{zb1zb2zb}
\end{align}
for any $\zb,\xb,\xb'\in \mathbb{R}^d$, $i\in\{1,\ldots,S\}$ and $j \in \{1,2\ldots, D\}.$

Now take an integer $N$ and divide $[t_{m},t_{m+1}]$ into $L$ pieces. Define $\tau_l=t_{m}+l (t_{m+1}-t_{m})/N$ for $l=0,\ldots,N$. Since $\yb$ is continuous in $[0,T]$, we can choose $N$ large enough such that
$$\mathop{\sup}\limits_{t\in [\tau_l,\tau_{l+1}]}\|\yb(t)-\yb(\tau_l)\|\le \epsilon \quad \mbox{for any } l \in \{0,\cdots,N-1\}.$$
By \eqref{eq:LambdaLip} and \eqref{zb1zb2zb}, we have
\begin{align*}
 &\chi_{\left\{\bar \zb_n\in N_\epsilon(\yb)\right\}} \int_{t_{m}}^{t_{m+1}}\lambda_i(\bar \zb_n(t),\bar \xi_n(t))-\mu_i^m\frac{{{\lambda_i}({{\bar z}_n}(t),{\bar \xi_n}(t))}}{\lambda^{{\pi}}_{i}(\yb(t))}dt\\
\le &  \chi_{\left\{\bar \zb_n\in N_\epsilon(\yb)\right\}} \sum_{l=0}^{N-1}\int_{\tau_l}^{\tau_{l+1}}\lambda_i(\yb(\tau_l),\bar \xi_n(t))-\mu_i^m\frac{{{\lambda_i}(\yb(\tau_l),{\bar \xi_n}(t))}}{\lambda^{{\pi}}_{i}(\yb(\tau_l))}+C \epsilon dt,
\end{align*}
where $C=(2+2\Lambda+\Lambda^3)L$. So we have
\begin{align*}
&\lim_{n\to \infty}\mathbb{E}_{\bar{\mathbb{Q}}_n}\int_{t_{m}}^{t_{m+1}}\lambda_i(\bar \zb_n(t),\bar \xi_n(t))-\mu_i^m\frac{{{\lambda_i}({{\bar z}_n}(t),{\bar \xi_n}(t))}}{\lambda^{{\pi}}_{i}(\yb(t))}dt\\
\le& \sum_{l=0}^{N-1}\int_{\tau_l}^{\tau_{l+1}}\sum_{j=1}^D\lambda_i(\yb(\tau_l),j)n_{\pi}(t,j)-\mu_i^m+C \epsilon dt\\
\le&\sum_{l=0}^{N-1}\int_{\tau_l}^{\tau_{l+1}}\sum_{j=1}^D\lambda_i(\yb(t),j)n_{\pi}(t,j)-\mu_i^m+C_1 \epsilon dt\\
=&\int_{t_{m}}^{t_{m+1}}\lambda^{{\pi}}_{i}(\yb(t))-\mu_i^mdt+C_1(t_{m+1}-t_{m})\epsilon
\end{align*}
by ergodicity of the process $\bar\xi_n$, where $C_1=(3+2\Lambda+\Lambda^3)L$. Similarly, we can also obtain
\begin{align*}
&\lim_{n\to \infty}\mathbb{E}_{\bar{\mathbb{Q}}_n}\int_{t_{m}}^{t_{m+1}}\lambda_i(\bar \zb_n(t),\bar \xi_n(t))-\mu_i^m\frac{{{\lambda_i}({{\bar z}_n}(t),{\bar \xi_n}(t))}}{\lambda^{{\pi}}_{i}(\yb(t))}dt\\
\ge &\int_{t_{m}}^{t_{m+1}}\lambda^{{\pi}}_{i}(\yb(t))-\mu_i^mdt-C_1(t_{m+1}-t_{m})\epsilon.
\end{align*}
So we finish the proof for \eqref{convergence1}.

To prove \eqref{convergence2}, we first assume that $\lambda_i(\xb)$ are constant functions.
In $[t_{m},t_{m+1})$, the number of jumps $\bar \zb_n$ makes in each direction $\ub_i/n$ are independent Poisson random variables with mean $n\mu_i^m(t_{m+1}-t_{m})$. So
\begin{align}\notag
&\lim_{n\to \infty}\frac1n\mathbb{E}_{\bar{\mathbb{Q}}_n}\int_{t_{m}}^{t_{m+1}}\sum_i \log \frac{{{\mu_i(t^-)}}}{\lambda^{{\pi}}_{i}(\yb(t))}dY_t^i \\
=&\int_{t_{m}}^{t_{m+1}}\sum_{i=1}^S\mu_i^m \log \frac{\mu_i^m}{\lambda^{{\pi}}_{i}(\yb(t))}dt.
\end{align}
For general $\lambda_i$, we can use the technique for proving \eqref{convergence1} by dividing the interval $[t_{m},t_{m+1}]$ into small pieces and approximating \eqref{convergence2} by Riemann sums.

For \eqref{convergence3}, again we first assume that $q_{ij}(\xb)$ are constant functions. In $[t_{mk},t_{m,k+1}]$, the number of jumps $\bar \xi_n$ makes in each direction $\eb_{ij}$ are independent Poisson random variables with mean $n\cdot n_{\pi}(t_{mk},i)\eta_{ij}^{mk}(t_{m,k+1}-t_{mk})$. So
\begin{align}\notag
&\lim_{n\to \infty}\frac{1}{n}\mathbb{E}_{\bar{\mathbb{Q}}_n}\int_{t_{mk}}^{t_{m,k+1}}\sum_{i,j} \log \frac{\eta_{ij}(t^-)}{q_{ij}(\bar \zb_n(t^-))}dM_t^{ij} \\
=&\int_{t_{mk}}^{t_{m,k+1}}\sum_{i=1}^D n_{\pi}(t,i)\sum_{j=1}^D{\eta_{ij}^{mk}}\log \frac{{\eta_{ij}^{mk}}}{q_{ij}(\yb(t))}dt.
\end{align}
For general $q_{ij}$,  we consider separate cases $\left\{\bar \zb_n\in N_\epsilon(\yb)\right\}$ and $\left\{\bar \zb_n\notin N^c_\epsilon(\yb)\right\}$ as in \eqref{two terms}. Similar as proving \eqref{convergence1}, we can get the limit \eqref{convergence3}.
\end{proof}
\begin{lem}\label{main lemma}
For given $\rb \in \mathbb{D}^d[0,T]$ and $\nu \in \mathbb{M}_L[0,T]$, assume that $\rb$ is absolutely continuous and $n_\nu(t,\cdot)$ is continuous in $t$. Then for arbitrarily small $\epsilon>0$  we have
\begin{eqnarray*}
\mathop{\lim \inf}\limits_{n\to \infty}\frac{1}{n}\log\mathbb{P}\left(\zb_n\in N_\epsilon(\rb),\nu_n\in N_\epsilon(\nu)\right) \ge - \left(I_s(\rb,\nu)+I_{f}(\rb,\nu)\right).
\end{eqnarray*}
\end{lem}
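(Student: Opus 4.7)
The plan is to combine the approximation results in Lemmas \ref{up 1}--\ref{covergence of tilde z} with the change of measure formula \eqref{formula}, Jensen's inequality, and the limit computations in Lemma \ref{section 7 of Adam's book}.

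Fix $\epsilon > 0$. First I invoke Lemmas \ref{up 1} and \ref{up 2} to obtain a subdivision $\{t_{mk}\}$ of $[0,T]$, matrices $\{\eta^{mk}\}$, vectors $\{\mub^m\}$, the piecewise linear path $\yb$, and the piecewise constant occupation measure $\pi$ satisfying $\rho_c^{(1)}(\yb,\rb) < \epsilon/4$ and $\rho^{(2)}(\pi,\nu) < \epsilon/4$, such that the fast- and slow-variable entropy-type integrals are bounded by $I_f(\rb,\nu)+\epsilon$ and $I_s(\rb,\nu)+\epsilon$, respectively. Then I build the auxiliary processes $\bar\zb_n$ and $\bar\xi_n$ with the jump rates prescribed in the paragraph preceding Lemma \ref{covergence of tilde v}.

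Next I introduce the event
\begin{equation*}
A_n = \left\{\rho_c^{(1)}(\bar\zb_n,\yb) < \epsilon/2,\; \rho^{(2)}(\bar\nu_n,\pi) < \epsilon/2\right\}.
\end{equation*}
By Lemmas \ref{covergence of tilde v} and \ref{covergence of tilde z}, $\bar{\mathbb{Q}}_n(A_n) \to 1$; moreover, on $A_n$ we have $\bar\zb_n \in N_\epsilon(\rb)$ and $\bar\nu_n \in N_\epsilon(\nu)$ by the triangle inequality together with $\rho^{(1)} \le \rho_c^{(1)}$. The change of measure \eqref{formula} then gives
\begin{equation*}
\mathbb{P}\big(\zb_n \in N_\epsilon(\rb),\, \nu_n \in N_\epsilon(\nu)\big) \ge \mathbb{E}_{\bar{\mathbb{Q}}_n}\big[\chi_{A_n} e^{B(\bar\zb_n,\bar\xi_n)}\big],
\end{equation*}
and applying Jensen's inequality to the conditional expectation yields
\begin{equation*}
\frac{1}{n}\log\mathbb{P}\big(\zb_n \in N_\epsilon(\rb),\, \nu_n \in N_\epsilon(\nu)\big) \ge \frac{1}{n}\log\bar{\mathbb{Q}}_n(A_n) + \frac{1}{n\,\bar{\mathbb{Q}}_n(A_n)}\mathbb{E}_{\bar{\mathbb{Q}}_n}\big[\chi_{A_n} B(\bar\zb_n,\bar\xi_n)\big].
\end{equation*}
Taking $\liminf_{n\to\infty}$, the first term on the right vanishes. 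Provided one can replace $\mathbb{E}_{\bar{\mathbb{Q}}_n}[\chi_{A_n} B]$ by $\mathbb{E}_{\bar{\mathbb{Q}}_n}[B]$ to leading order in $n$, Lemma \ref{section 7 of Adam's book} combined with the bounds in Lemmas \ref{up 1} and \ref{up 2} yields
\begin{equation*}
\liminf_{n\to\infty} \frac{1}{n}\mathbb{E}_{\bar{\mathbb{Q}}_n}\big[B(\bar\zb_n,\bar\xi_n)\big] \ge -\big(I_s(\rb,\nu) + I_f(\rb,\nu) + 2\epsilon\big),
\end{equation*}
and sending $\epsilon \downarrow 0$ completes the proof.

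The main obstacle is passing from the unconditional expectation delivered by Lemma \ref{section 7 of Adam's book} to the conditional $\mathbb{E}_{\bar{\mathbb{Q}}_n}[B \mid A_n]$ appearing in Jensen's bound. Since $\mu_i(t)$ and $\eta_{ij}(t)$ are bounded by construction and $\log\lambda_i$, $\log q_{ij}$ are bounded by Assumption \ref{ass:main}, the predictable quadratic variations of the stochastic integrals in $B$ are $O(n)$, while the compensators have expectation of order $n$, so $B/n$ has mean of order $1$ and variance of order $1/n$. A Cauchy--Schwarz estimate combined with $\bar{\mathbb{Q}}_n(A_n^c)\to 0$ then forces $|\mathbb{E}_{\bar{\mathbb{Q}}_n}[\chi_{A_n^c} B]|/n \to 0$, closing the argument.
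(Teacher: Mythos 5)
Your proposal is correct and follows essentially the same route as the paper: approximate $(\rb,\nu)$ by $(\yb,\pi)$ via Lemmas \ref{up 1} and \ref{up 2}, construct $(\bar\zb_n,\bar\xi_n)$, apply the change of measure \eqref{formula} with Jensen's inequality on the event that $(\bar\zb_n,\bar\nu_n)$ is close to $(\yb,\pi)$, and then invoke Lemmas \ref{covergence of tilde v}, \ref{covergence of tilde z} and \ref{section 7 of Adam's book} to pass to the limit. Your explicit second-moment/Cauchy--Schwarz justification for replacing $\mathbb{E}_{\bar{\mathbb{Q}}_n}[\chi_{A_n}B]$ by $\mathbb{E}_{\bar{\mathbb{Q}}_n}[B]$ is a welcome sharpening of a step the paper's proof passes over tacitly, and it is valid since the integrands in $B$ are bounded under Assumption \ref{ass:main} and the construction of $\mu_i(t)$, $\eta_{ij}(t)$.
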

\begin{proof}
By Eq. \eqref{formula} and Jensen's inequality, for any $\epsilon>0$
\begin{align}\notag
&\mathbb{P}\left(\zb_n\in N_\epsilon(\rb),\nu_n\in N_\epsilon(\nu)\right) \\\notag
\ge& \mathbb{P}\left(\zb_n\in N_{\epsilon/2}(\yb),\nu_n\in N_{\epsilon/2}({\pi})\right) \\\notag
=&\mathbb{E}_{\bar{\mathbb{Q}}_{n}}\left[\frac{d\mathbb{Q}_n}{d\bar{\mathbb{Q}}_n}(\bar \zb_n(t),\bar \xi_n(t))\chi_{\left\{\bar \zb_n\in N_{\epsilon/2}(\yb),\bar \nu_n\in N_{\epsilon/2}({\pi})\right\}}\right]\\ \notag
=&\mathbb{E}_{\bar{\mathbb{Q}}_{n}}\left[e^{B(\bar \zb_n,\bar \xi_n)}\chi_{\left\{\bar \zb_n\in N_{\epsilon/2}(\yb),\bar \nu_n\in N_{\epsilon/2}({\pi})\right\}}\right]\\ \label{Jensen}
\ge& \mathbb{E}_{\bar{\mathbb{Q}}_{n}}\big[\chi_{\left\{\bar \zb_n\in N_{\epsilon/2}(\yb),\bar \nu_n\in N_{\epsilon/2}({\pi})\right\}}\big]
\exp\left\{\frac{\mathbb{E}_{\bar{\mathbb{Q}}_{n}}\big[\chi_{\left\{\bar \zb_n\in N_{\epsilon/2}(\yb),\bar \nu_n\in N_{\epsilon/2}({\pi})\right\}}B(\bar \zb_n,\bar \xi_n)\big]}{\mathbb{E}_{\bar{\mathbb{Q}}_{n}}\big[\chi_{\left\{\bar \zb_n\in N_{\epsilon/2}(\yb),\bar \nu_n\in N_{\epsilon/2}({\pi})\right\}}\big]}\right\}.
\end{align}
By Lemmas \ref{covergence of tilde v} and \ref{covergence of tilde z}, we know that
\begin{equation}\label{ypi}
\mathop{\lim}\limits_{n\to \infty}  \mathbb{E}_{\bar{\mathbb{Q}}_{n}}\left[\chi_{\left\{\bar \zb_n\in N_{\epsilon/2}(\yb),\bar \nu_n\in N_{\epsilon/2}({\pi})\right\}}\right]=1.
\end{equation}
Thus, according to Lemma \ref{section 7 of Adam's book}, \eqref{Jensen} and \eqref{ypi}, we have
\begin{align}\notag
&\mathop{\lim \inf}\limits_{n\to \infty}\frac{1}{n}\log\mathbb{P}\left(\zb_n\in N_\epsilon(\rb),\nu_n\in N_\epsilon(\nu)\right)\\ \notag
\ge&\mathop{\lim \inf}\limits_{n\to \infty}\frac{1}{n}\mathbb{E}_{\bar{\mathbb{Q}}_{n}}\big[\chi_{\left\{\bar \zb_n\in N_{\epsilon/2}(\yb),\bar \nu_n\in N_{\epsilon/2}({\pi})\right\}}B(\bar \zb_n,\bar \xi_n)\big]\\ \notag
=&-\Bigg(\sum_{m=0}^{J-1}\int_{t_{m}}^{t_{m+1}}\sum_{i=1}^S\left(\lambda^{{\pi}}_{i}(\yb(t))-\mu_i^m\right)dt\\ \notag
+&\sum_{m=0}^{J-1}\int_{t_{m}}^{t_{m+1}}\sum_{i=1}^S\mu_i^m\log \frac{\mu_i^m}{\lambda^{{\pi}}_{i}(\yb(t))}dt \\  \label{dQn}
+&\sum_{m=0}^{J-1}\sum_{k=0}^{K_m-1}\int_{t_{mk}}^{t_{m,k+1}}\sum_{i=1}^D n_{\pi}(t,i)\sum_{j=1}^D\Big({\eta_{ij}^{mk}}\log \frac{{\eta_{ij}^{mk}}}{q_{ij}(\yb(t))}+q_{ij}(\yb(t))-{\eta_{ij}^{mk}}\Big)dt\Bigg).
\end{align}
Combining Lemma \ref{up 1}, Lemma \ref{up 2} and \eqref{dQn}, we finish
the proof.
\end{proof}
In the final theorem, we remove the continuity assumption on $n_\nu(t,\cdot)$ to get the desired lower bound estimation.
\begin{thm}\label{final theorem}
For given $\rb \in \mathbb{D}^d[0,T]$ and $\nu \in \mathbb{M}_L[0,T]$, assume that $\rb$ is absolutely continuous, we have
\begin{eqnarray*}
\mathop{\lim \inf}\limits_{n\to \infty}\frac{1}{n}\log\mathbb{P}\left(\zb_n\in N_\epsilon(\rb),\nu_n\in N_\epsilon(\nu)\right)\ge - \left(I_s(\rb,\nu)+I_{f}(\rb,\nu)\right).
\end{eqnarray*}
\end{thm}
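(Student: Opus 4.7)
The plan is to reduce Theorem \ref{final theorem} to Lemma \ref{main lemma} by constructing a recovery sequence $\nu^k \in \mathbb{M}_L[0,T]$ with $n_{\nu^k}(t,\cdot)$ continuous in $t$ that approximates $\nu$ both in the Lévy--Prohorov metric and at the level of the rate functional. First I would dispense with the trivial case: if $I_s(\rb,\nu)+I_f(\rb,\nu)=\infty$ there is nothing to prove, so I may assume the functional is finite and, in particular, that $n_\nu(t,\cdot)\in\Delta_D$ for a.e.\ $t$. The reduction then runs as follows: for $k$ large enough that $\rho^{(2)}(\nu^k,\nu)<\epsilon/2$, the inclusion $N_{\epsilon/2}(\nu^k)\subset N_\epsilon(\nu)$ gives
\begin{align*}
\liminf_{n\to\infty}\tfrac{1}{n}\log\mathbb{P}\bigl(\zb_n\in N_\epsilon(\rb),\,\nu_n\in N_\epsilon(\nu)\bigr)
&\ge\liminf_{n\to\infty}\tfrac{1}{n}\log\mathbb{P}\bigl(\zb_n\in N_\epsilon(\rb),\,\nu_n\in N_{\epsilon/2}(\nu^k)\bigr)\\
&\ge -\bigl(I_s(\rb,\nu^k)+I_f(\rb,\nu^k)\bigr),
\end{align*}
where the second inequality is exactly Lemma \ref{main lemma} applied to $(\rb,\nu^k)$. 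Sending $k\to\infty$ and using $I(\rb,\nu^k)\to I(\rb,\nu)$ will conclude the theorem.

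For the construction of $\nu^k$ I would proceed in two stages. Partition $[0,T]$ into $k$ equal sub-intervals of length $\Delta=T/k$, and on each sub-interval replace $n_\nu(t,\cdot)$ by its time-average $\bar{\wb}^k_m\in\Delta_D$; call the resulting piecewise constant density $\tilde{n}^k$. Because the integrand $L_s(\rb(t),\dot\rb(t),\wb)+S(\rb(t),\wb)$ is jointly convex in $\wb$ (Lemma \ref{convex}), Jensen's inequality yields, on each piece,
\[
\int_{t_m}^{t_{m+1}}\bigl[L_s(\rb,\dot\rb,\bar\wb^k_m)+S(\rb,\bar\wb^k_m)\bigr]\,dt
\le\int_{t_m}^{t_{m+1}}\bigl[L_s(\rb,\dot\rb,n_\nu)+S(\rb,n_\nu)\bigr]\,dt,
\]
so the piecewise constant surrogate does not increase the rate functional, and obviously $\tilde{n}^k\to n_\nu$ in the $\rho^{(2)}$-sense as $k\to\infty$. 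In the second stage I would smooth the finitely many jumps of $\tilde{n}^k$ by linear interpolation on shrinking intervals of length $\delta_k\ll\Delta$, obtaining a continuous $n_{\nu^k}(t,\cdot)$. The extra contribution from these $k$ transition intervals is controlled by boundedness of $L_s^{\mathrm{loc}}+S$ on the compact image of $\rb$ combined with the uniform bounds on $\lambda_i$ and $q_{ij}$ in Assumption \ref{ass:main}; choosing $\delta_k$ small (e.g.\ $\delta_k=1/k^2$) drives this contribution to zero while still giving $\rho^{(2)}(\nu^k,\nu)\to 0$.

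The main obstacle will be gaining the convergence $I_s(\rb,\nu^k)+I_f(\rb,\nu^k)\to I_s(\rb,\nu)+I_f(\rb,\nu)$ with full control on the upper side. The Jensen step delivers the $\limsup$ inequality for the piecewise constant surrogate, and the lower semicontinuity of $I_s+I_f$ (referenced as Lemma \ref{lower semicontinuity of I+F} in the Appendix) delivers the $\liminf$ inequality for $\nu^k\to\nu$, so the two together pin down the limit. The one genuinely delicate point is that $\dot\rb$ need only be $L^1$, which means $L_s(\rb(t),\dot\rb(t),\wb)$ may blow up in $t$; this is handled by an approximation of $\rb$ first (as already done in Lemma \ref{up 2}), or by the uniform bounds in Assumption \ref{ass:main} which keep $H_s$ coercive enough that finiteness of $I_s(\rb,\nu)$ forces the integrand to be integrable. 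With these ingredients the reduction to Lemma \ref{main lemma} is complete and the desired lower bound follows.
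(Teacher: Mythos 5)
Your overall architecture is the same as the paper's: approximate $\nu$ by measures $\nu^{(k)}$ whose densities $n_{\nu^{(k)}}(t,\cdot)$ are continuous in $t$, use $N_{\epsilon/2}(\nu^{(k)})\subset N_\epsilon(\nu)$ together with Lemma \ref{main lemma} applied to $(\rb,\nu^{(k)})$, and pass to the limit. As in the paper (whose own write-up records only the lower-semicontinuity inequality from Lemma \ref{lower semicontinuity of I+F}, which is not the direction the final chain of inequalities actually consumes), the entire weight of the argument sits on the upper-side control $I_s(\rb,\nu^{(k)})+I_f(\rb,\nu^{(k)})\le I_s(\rb,\nu)+I_f(\rb,\nu)+o(1)$. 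You see this clearly, but the mechanism you propose for it --- block-averaging $n_\nu$ and invoking Jensen --- does not work as stated. The map $\wb\mapsto L_s(\rb(t),\dot\rb(t),\wb)+S(\rb(t),\wb)$ is indeed convex for each fixed $t$, but it varies with $t$ through $\dot\rb(t)$, which is only $L^1$ and may oscillate arbitrarily inside a block; Jensen compares one fixed convex function at an average with the average of its values and says nothing about a $t$-dependent family. Concretely, if inside a block $\dot\rb$ alternates between two velocities and $n_\nu(t,\cdot)$ is close to the pointwise minimizer in $\wb$ for each of them (two DNA states favouring jumps in opposite directions), replacing $n_\nu$ by its block average strictly increases the integral, so your displayed inequality fails; continuity of $\rb$ controls only the $\zb$-slot, not the $\betab$-slot. (This is precisely why the paper's lower-semicontinuity proof freezes the first argument via $L_s^\delta$ and averages $\dot\rb$ and $n_{\nu}$ \emph{simultaneously}, using joint convexity in $(\betab,\wb)$ --- an option unavailable here because $\rb$ is prescribed.)

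The gap is repairable, and the repair is closer to your closing remarks than to Jensen. Take $n_{\nu^{(k)}}(t,\cdot)$ to be a mollification (or the continuous interpolation of block averages) of $n_\nu$; then $n_{\nu^{(k)}}(t,\cdot)\to n_\nu(t,\cdot)$ for a.e.\ $t$ by Lebesgue differentiation and $\rho^{(2)}(\nu^{(k)},\nu)\to 0$. Under Assumption \ref{ass:main} the effective rates $\sum_j\lambda_i(\zb,j)w_j$ lie in $[1/\Lambda,\Lambda]$ uniformly in $\wb\in\Delta_D$, so $\wb\mapsto L_s(\zb,\betab,\wb)+S(\zb,\wb)$ is continuous on $\Delta_D$ (for $\betab$ in the cone $\mathcal C$, which a.e.\ $\dot\rb(t)$ must belong to once $I_s(\rb,\nu)<\infty$), and one has two-sided bounds $M_1\|\betab\|\log\|\betab\|\le L_s(\zb,\betab,\wb)\le M_2\|\betab\|\log\|\betab\|$ for large $\|\betab\|$, uniformly in $(\zb,\wb)$, exactly as used in the proof of Lemma \ref{up 2}, while $S$ is bounded. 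Hence finiteness of $I_s(\rb,\nu)$ gives $\int_0^T\|\dot\rb(t)\|\log^{+}\|\dot\rb(t)\|\,dt<\infty$, which dominates $\sup_{\wb\in\Delta_D}\bigl(L_s(\rb(t),\dot\rb(t),\wb)+S(\rb(t),\wb)\bigr)$; dominated convergence then gives $I_s(\rb,\nu^{(k)})+I_f(\rb,\nu^{(k)})\to I_s(\rb,\nu)+I_f(\rb,\nu)$, which is the control your reduction needs. The same dominating function, via absolute continuity of its integral, also bounds the contribution of your short transition intervals --- boundedness of $L_s$ on the compact image of $\rb$ is not enough there, since $\dot\rb$ is unbounded. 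With this substitution for the Jensen step your proof goes through and coincides in structure with the paper's.
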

\begin{proof}  We can construct a sequence of measures $\nu^{(k)}$ $(k \ge 1)$  such for any $k$, $n_{\nu^{(k)}}$ is continuous in $t$ and $\rho^{(2)}(\nu,\nu^{(k)}) \to 0$.  From Lemma \ref{lower semicontinuity of I+F}, $I_s(\rb,\nu)+I_{f}(\rb,\nu)$ is lower semi-continuous in $\nu$. Thus, we can choose $k_0$ large enough such that for any $\delta>0$ and $\epsilon>0$,
 $$ I_s(\rb,\nu^{(k_0)})+I_{f}(\rb,\nu^{(k_0))}) \ge I_s(\rb,\nu)+I_{f}(\rb,\nu)-\delta$$
 and
 $$\rho^{(2)}(\nu,\nu^{(k_0)})< \epsilon/2.$$
Thanks to Lemma \ref{main lemma}, we have
 \begin{eqnarray*}
&&\mathop{\lim \inf}\limits_{n\to \infty}\frac{1}{n}\log\mathbb{P}\left(\zb_n\in N_\epsilon(\rb),\nu_n\in N_\epsilon(\nu)\right) \\
&\ge& \mathop{\lim \inf }\limits_{n\to \infty}\frac{1}{n}\log\mathbb{P}\left(\zb_n\in N_{\epsilon}(\rb),\nu_n\in N_{\epsilon/2}(\nu^{(k_0)})\right) \\
&\ge&- \left(I_s(\rb,\nu^{(k_0)})+I_{f}(\rb,\nu^{(k_0)})\right)\\
&\ge&- \left(I_s(\rb,\nu)+I_{f}(\rb,\nu)\right)-\delta.
\end{eqnarray*}
The proof is completed.
\end{proof}

\subsection{Goodness of the rate functional}
The rate functional $I_{s}(\rb,\nu)+I_{f}(\rb,\nu)$ is  lower semicontinuous by  Lemma \ref{lower semicontinuity of I+F}. The goodness of the rate functional is a direct consequence of Lemma \ref{compactness of I+F}.

\section{Proof of Theorem \ref{second result}}

Now we prove Theorem \ref{second result} under the consideration $\rb\in W=\overline{(\mathbb{R}^{+})^{d}}$ instead of the whole space. The main clue of the proof is the same as the proof of Theorem \ref{Main result} except some technicalities to understand the behavior of jumps near the boundary of $W$. We will only focus on the key parts which is different from the proof of Theorem \ref{Main result}.

The difficulty in the proof of lower bound is that we can not use the change of measure formula directly, since some of the jump rates may diminish on the boundary. Mainly following  \cite{Shwartz2005}, We overcome this issue by  carefully analyzing the boundary behavior of the dynamics .

Let a $d$-dimensional unit vector $\vb := (1,1,\cdots,1)/\sqrt{d}$  and define the shifting ${\rb}_\delta(t)= \rb(t)+\delta \vb$ with $\delta>0$ a sufficiently small number. With similar approach in proving Lemma 5.1 in \cite{Shwartz2005}, we can show that
\begin{eqnarray}\label{t delta}
 \mathop{\lim \sup}_{\delta \to 0^{+}} \left(I_s({\rb}_\delta,\nu)+I_f({\rb}_\delta,\nu)\right)\le I_s(\rb,\nu)+I_f(\rb,\nu).
\end{eqnarray}

\noindent Next we will prove
\begin{eqnarray*}
\mathop{\lim \inf}\limits_{n\to \infty}\frac{1}{n}\log\mathbb{P}\left(\zb_n\in N_\delta(\rb),\nu_n\in N_\delta(\nu)\right)\ge - \left(I_s(\rb,\nu)+I_{f}(\rb,\nu)\right).
\end{eqnarray*}
 Denote by $V_a(\rb)$ the modulus of continuity of $\rb$ with size $a$, and set $\eta(a)=\max\{V_a(\rb),a\}$ so that $\eta^{-1}(a)\le a $.
 Now, fix $\delta$ and set $t_\delta=\eta^{-1}(\delta/3)$. Then, $t_\delta \le \delta/3$ and for $t\le t_\delta$,
$$
\mathop{\sup}_{0\le t \le t_\delta}\|\rb(0) + t\cdot \vb-\rb(t)\| \le t_\delta \cdot \|\vb\| +\eta(t_\delta)\le 2\delta/3.
$$
Therefore, for $0< \alpha <1/6$,
\begin{eqnarray*}
&&\mathbb{P}\left(\zb_n\in N_\delta(\rb),\nu_n\in N_\delta(\nu)\right)\ge \mathbb{P}\Big(\|{\zb}_n(t)-{\rb}(0)-t\cdot \vb\| \le \alpha \delta~\text{on}~t\in [0,t_\delta],\\
&& \hspace*{3cm}{\zb}_n \in N_{\delta}(\rb;[t_\delta,T]); \nu_n\in N_\delta(\nu)\Big),
\end{eqnarray*}
where $N_{\delta}(\rb;[t_\delta,T])$ is the $\delta$-neighborhood of $\rb$ restricted  on $t\in [t_\delta,T]$. Now, on this time interval
$$
\mathop{\sup}_{t_\delta \le t \le T} \|\rb(t)-{\rb}_{t_{\delta}}(t)\| \le \delta/3
$$
and, moreover, $d({\rb}_{t_{\delta}}(t),\partial G) \ge  t_\delta/\sqrt{d}$. Therefore, for any function $\ub$ on $t\in[t_{\delta},T]$, $\|\ub-{\rb}_{t_{\delta}}\| \le t_\delta/2\sqrt{d}$
implies that $\|\ub-\rb\|\le 5\delta/6$ and $d({\rb}_{t_{\delta}}(t),\partial G) \ge t_\delta/2\sqrt{d}$. Now define $A_\delta$ the $\alpha \delta$-neighborhood of $\rb_{0}+t_\delta \vb$, i.e.  $A_\delta$$:=B_{\alpha \delta}(\rb_{0}+t_\delta \vb)$ and let ${\rb}_{t_{\delta}}^{\yb}$ be the shift of ${\rb}_{t_{\delta}}$ such that ${\rb}_{t_{\delta}}^\yb(t_\delta)=\yb$. Then,
\begin{eqnarray*}
&& \mathbb{P}\left(\zb_n\in N_\delta(\rb),\nu_n\in N_\delta(\nu)\right)\ge\\
&& \hspace*{1cm}\mathbb{P}\Big(\|{\zb}_n(t)-\rb(0)-t\cdot \vb\| \le \alpha \delta~\text{on}~t\in[0,t_\delta];\nu_n\in N_{\delta}(\nu;[0,t_\delta])\Big)\\
&& \hspace*{1cm}\times \mathop{\inf}_{\yb\in A_\delta} \mathbb{P}_\yb\left({\zb}_n \in N_{\frac{t_\delta}{2\sqrt{d}}}({\rb}_{t_{\delta}}^\yb;[t_\delta,T]);\nu_n\in N_{\delta}(\nu;[t_\delta,T])\right).
\end{eqnarray*}

\noindent The first term satisfies a large deviation lower bound
\begin{eqnarray}\notag
&&\mathop{\lim \inf}_{n \to \infty} \frac{1}{n}\log \mathbb{P}\Big(\|{\zb}_n(t)-\rb(0)-t\cdot \vb\| \le \alpha \delta~\text{on}~t\in [0,t_\delta];\nu_n\in N_{\delta}(\nu;[0,t_\delta])\Big)\\\label{t delta and 0}
&&\ge -C t_\delta
\end{eqnarray}
by estimating the probability of a specific path $\zb_{n}$ lying in the $\alpha\delta$-neighbor-hood of the curve $\rb(0)+t\vb$. Because the paths in $N_{\frac{t_\delta}{2\sqrt{d}}}({\rb}_{t_{\delta}}^\yb;[t_\delta,T])$ are bounded away from the boundary uniformly for $\yb\in A_\delta$,  by Theorem \ref{final theorem}, we have
\begin{eqnarray}\notag
&&\mathop{\lim \inf}_{n \to \infty} \frac{1}{n}\log \mathop{\inf}_{\yb\in A_\delta} \mathbb{P}_\yb\left({\zb}_n \in N_{\frac{t_\delta}{2\sqrt{d}}}({\rb}_{t_{\delta}}^\yb;[t_\delta,T]);\nu_n\in N_{\epsilon}(\nu;[t_\delta,T])\right)\\ \notag
&\ge & -\Big(I^{[t_\delta,T]}_s({\rb}_{t_{\delta}},\nu)+I^{[t_\delta,T]}_f({\rb}_{t_{\delta}},\nu)\Big)\\ \label{t delta and t}
&\ge &-\Big(I_s({\rb}_{t_{\delta}},\nu)+I_f({\rb}_{t_{\delta}},\nu)\Big),
\end{eqnarray}
where $I^{[t_\delta,T]}_s({\rb}_{t_{\delta}},\nu)$ and $I^{[t_\delta,T]}_f({\rb}_{t_{\delta}},\nu)$ are rate functionals defined on the integration interval $[t_\delta,T]$. According to (\ref{t delta}), (\ref{t delta and 0}) and (\ref{t delta and t}), we proved the lower bound.

Next let us consider the upper bound.  At first  we note that since the rates $\lambda_{i}(\zb,j)$ satisfies the linear growth condition
$$\lambda_{i}(\zb,j)\le C(1+\|\zb\|),$$
it is easy to show that
$$\mathop{\lim}_{K\to \infty}\mathop{\lim \sup}_{n\to \infty}\frac{1}{n}\log \mathbb{P}(\mathop{\sup}_{0\le t \le T} \|\zb_n(t)\| >K)=-\infty$$
by simple moment estimates and Doob's martingale inequality. Consequently, it suffices to prove the large deviation estimates for bounded sets and we can assume $\lambda_{i}(\zb,j)$ are bounded.

We only need to recheck Lemma \ref{phi and  phi^delta} and Lemma \ref{step function}, since the other lemmas in upper bound estimates can be verified easily under the assumption that $\lambda_{i} (\zb,j)$ are bounded.  Thanks to Corollary 4.2 and Lemma 4.6 in \cite{Shwartz2005}, we can obtain that Lemma \ref{phi and  phi^delta} and Lemma \ref{step function} are also correct under Assumption \ref{ass:second}. Thus the upper bound is also established.

The goodness of the rate functional trivially holds under Assumption \ref{ass:second}. So we complete the proof of Theorem \ref{second result}.

\section*{Appendix}
\renewcommand{\thesection}{A}

\begin{lem}\label{semicontinuous}
Let $\{f_\alpha \}$ be a collection of lower semi-continuous functions on a metric space. Then the function $f$ define by $f(x)=\sup_\alpha f_\alpha(x)$ is lower semicontinuous.
\end{lem}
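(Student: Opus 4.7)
\medskip
\noindent\textbf{Proof proposal.} This is a standard fact about suprema of lower semicontinuous families, and I would use the open-sublevel-set characterization of lower semicontinuity as the cleanest route. Recall that a function $g$ on a metric space $X$ is lower semicontinuous if and only if for every $c \in \mathbb{R}$ the super-level set $\{x \in X : g(x) > c\}$ is open (equivalently, $\{x : g(x) \le c\}$ is closed). Using this reduces the problem to a purely set-theoretic identity.

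The plan is the following. Fix an arbitrary $c \in \mathbb{R}$ and write
\begin{equation*}
\{x \in X : f(x) > c\} = \Big\{x : \sup_\alpha f_\alpha(x) > c\Big\} = \bigcup_\alpha \{x : f_\alpha(x) > c\}.
\end{equation*}
The second equality is the only nontrivial point: if $\sup_\alpha f_\alpha(x) > c$, then by the definition of supremum there exists some index $\alpha_0$ with $f_{\alpha_0}(x) > c$, putting $x$ in the union; the reverse inclusion is immediate since $f_\alpha(x) \le f(x)$ for each $\alpha$. By hypothesis, each $f_\alpha$ is lower semicontinuous, so each set $\{x : f_\alpha(x) > c\}$ is open. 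A union of open sets is open, hence $\{x : f(x) > c\}$ is open, and $f$ is lower semicontinuous.

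As a sanity check, I would also note the equivalent sequential argument: for any $x_n \to x_0$ and any fixed $\alpha$, lower semicontinuity of $f_\alpha$ gives $\liminf_{n\to\infty} f(x_n) \ge \liminf_{n\to\infty} f_\alpha(x_n) \ge f_\alpha(x_0)$; taking the supremum over $\alpha$ on the right yields $\liminf_{n\to\infty} f(x_n) \ge f(x_0)$. There is no real obstacle here — the only subtlety is the elementary passage from the pointwise supremum to the existence of an index realizing strict inequality, which is immediate from the definition of $\sup$. The statement allows $f$ to take the value $+\infty$, but the argument above remains valid verbatim in the extended real line since the identity for super-level sets does not rely on finiteness.
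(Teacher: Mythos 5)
Your proof is correct: the super-level-set identity $\{f>c\}=\bigcup_\alpha\{f_\alpha>c\}$ together with the openness of each $\{f_\alpha>c\}$ is the standard argument, and your sequential version is equally valid (also in the extended-real-valued setting). The paper itself states this lemma without proof, treating it as a standard fact, so there is nothing to compare against; your argument fills that gap correctly.
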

\begin{lem}\label{convex}
Let $\{f_\alpha \}$ be a collection of convex functions on a metric space. Then the function $f$ define by
$f(x)=\sup_\alpha f_\alpha(x)$ is convex.
\end{lem}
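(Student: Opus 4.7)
The plan is to prove this by a one-line pointwise argument, exactly parallel to the proof sketch one would write for Lemma~\ref{semicontinuous} (supremum of lower-semicontinuous functions). The statement as phrased mentions a ``metric space'', but convexity only makes sense when convex combinations $\lambda x + (1-\lambda)y$ are defined, so I would implicitly read the ambient space as a convex subset of a vector space (which is the only way the lemma is invoked elsewhere in the paper, e.g.\ with $\pb \in \mathbb R^d$ or $\wb \in \Delta_D$).

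First I would fix $x$, $y$ in the domain and $\lambda \in [0,1]$. For each index $\alpha$, convexity of $f_\alpha$ gives
\begin{equation*}
f_\alpha\!\left(\lambda x + (1-\lambda)y\right) \le \lambda f_\alpha(x) + (1-\lambda) f_\alpha(y).
\end{equation*}
Next I would use the pointwise bound $f_\alpha(z) \le f(z) = \sup_\beta f_\beta(z)$ at $z = x$ and $z = y$ to majorize the right-hand side by $\lambda f(x) + (1-\lambda) f(y)$, obtaining
\begin{equation*}
f_\alpha\!\left(\lambda x + (1-\lambda)y\right) \le \lambda f(x) + (1-\lambda) f(y).
\end{equation*}
Finally I would take the supremum over $\alpha$ on the left-hand side; the right-hand side does not depend on $\alpha$, so the inequality survives and yields $f(\lambda x + (1-\lambda)y) \le \lambda f(x) + (1-\lambda)f(y)$, which is exactly convexity of $f$.

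There is essentially no obstacle: the whole proof is two inequalities chained together, and no measurability, continuity, or finiteness hypothesis is needed (if $f(x)$ or $f(y)$ equals $+\infty$, the inequality is trivial; the supremum over $\alpha$ commutes with the fixed linear upper bound). The only ``choice'' in exposition is whether to state it directly or to note the analogy with Lemma~\ref{semicontinuous}, whose proof has the same structure.
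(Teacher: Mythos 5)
Your argument is correct and is the canonical proof; the paper states this lemma without proof as a standard fact, and your two-inequality chain (pointwise convexity of each $f_\alpha$ followed by majorizing with $f$ and taking the supremum over $\alpha$) is exactly what would be written, with your reading of the ``metric space'' as a convex subset of a vector space being the right interpretation of how the lemma is actually used in the paper.
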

\begin{lem}\label{Min-Max}
Let $K(x,y)$ be a real-valued function, continuous in $(x,y)$ on $\mathbb{R}^d\times \mathbb{R}^D$, convex in $x$ for each $y$, and concave in $y$ for each $x$. Let two non-empty closed convex sets $U$ and $V$ be given, at least one of which is bounded. Then
$$ \mathop{\inf}_{x\in U}\mathop{\sup}_{y\in V}K(x,y)=\mathop{\sup}_{y\in V}\mathop{\inf}_{x\in U}K(x,y).$$
\end{lem}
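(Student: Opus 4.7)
The trivial direction $\sup_{y \in V}\inf_{x \in U} K(x,y) \le \inf_{x \in U}\sup_{y \in V} K(x,y)$ follows immediately by exchanging the operations. My plan for the reverse inequality is to first reduce to the case where both $U$ and $V$ are compact, and then argue by contradiction using the finite-intersection property combined with a finite-dimensional minimax argument.

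For the reduction, assume without loss of generality that $U$ is bounded, hence compact. I would replace $V$ by $V_R := V \cap \overline{B_R(0)}$, which is compact convex. If the equality is established for each $V_R$, I would pass to the limit $R \to \infty$: the right-hand side $\sup_{y \in V_R}\inf_{x \in U} K(x,y)$ converges monotonically upward to $\sup_{y \in V}\inf_{x \in U} K(x,y)$ by definition of supremum, and the same passage works for the left-hand side using compactness of $U$ to exchange $\inf_{x \in U}$ with the monotone limit in $R$.

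For the compact case, I would argue by contradiction. Suppose $\beta := \sup_{y \in V}\inf_{x \in U} K(x,y) < \alpha := \inf_{x \in U}\sup_{y \in V} K(x,y)$ and pick $c \in (\beta,\alpha)$. The level sets $A_y := \{x \in U : K(x,y) \le c\}$ are closed (by continuity of $K$) and convex (by convexity of $K$ in $x$). By the very definition of $\alpha > c$, no single $x \in U$ can satisfy $K(x,y) \le c$ for every $y \in V$, so $\bigcap_{y \in V} A_y = \emptyset$. Compactness of $U$ then yields finitely many $y_1,\dots,y_n \in V$ with $\bigcap_{i=1}^n A_{y_i} = \emptyset$, i.e.\ $\max_{1 \le i \le n} K(x,y_i) > c$ for every $x \in U$.

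The contradiction would then come from the finite-dimensional minimax theorem. Consider $F(x,\lambda) := \sum_{i=1}^n \lambda_i K(x,y_i)$ on $U \times \Sigma_n$ with $\Sigma_n$ the probability simplex; $F$ is continuous, convex in $x$, and affine (thus concave) in $\lambda$, and both factors are compact convex, so von Neumann's minimax theorem gives $\min_{x \in U}\max_{\lambda \in \Sigma_n} F(x,\lambda) = \max_{\lambda \in \Sigma_n}\min_{x \in U} F(x,\lambda)$. The left-hand side equals $\min_{x \in U}\max_i K(x,y_i) \ge c$. For the right-hand side, for each $\lambda \in \Sigma_n$ set $\bar y := \sum_i \lambda_i y_i \in V$ (using convexity of $V$); concavity of $K$ in $y$ gives $F(x,\lambda) \le K(x,\bar y)$, so $\min_{x \in U} F(x,\lambda) \le \inf_{x \in U} K(x,\bar y) \le \beta < c$, and taking $\sup_\lambda$ preserves this bound, contradicting $\ge c$. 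The main obstacle is invoking (or independently re-proving via a separation-of-convex-sets argument) the finite-dimensional minimax theorem, together with carefully justifying the monotone passage $R \to \infty$ in the reduction when $V$ is unbounded.
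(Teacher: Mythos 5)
Your proposal is essentially correct, but it takes a genuinely different route from the paper, which offers no argument at all: Lemma \ref{Min-Max} is simply referred to Corollary 37.3.2 of \cite{Rockafellar1970}, i.e.\ to Rockafellar's closed saddle-function / conjugate duality machinery. You instead build an almost self-contained proof: truncate the unbounded set by $V_R=V\cap\overline{B_R(0)}$ and pass to the limit (the monotone convergence of $\sup_{y\in V_R}\inf_{x\in U}$ is immediate, and the convergence of $\inf_{x\in U}\sup_{y\in V_R}$ does follow from compactness of $U$: take minimizers $x_R$ of the continuous functions $g_R(x)=\sup_{y\in V_R}K(x,y)$, extract a convergent subsequence $x_{R_k}\to x^*$, and use $g_{R_0}(x_{R_k})\le g_{R_k}(x_{R_k})$ to bound $g(x^*)$ — spell this out, and also the one-line symmetry remark justifying ``without loss of generality $U$ is bounded,'' namely applying the statement to $-K$ with the roles of the two variables exchanged); then, in the compact case, use the finite intersection property of the closed convex level sets $A_y=\{x\in U: K(x,y)\le c\}$ to reduce to finitely many $y_1,\dots,y_n$, and close the argument with the minimax identity for $F(x,\lambda)=\sum_i\lambda_i K(x,y_i)$ together with concavity of $K$ in $y$ and convexity of $V$. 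The one point requiring real care is the last step: you must not invoke the general convex--concave compact--compact minimax theorem there, since that is precisely the compact case of the lemma you are proving; what you need is the strictly weaker statement for functions affine in $\lambda$ on the simplex $\Sigma_n$, which (as you yourself anticipate) can be proved independently by separating the convex set $\{z\in\mathbb{R}^n:\ \exists x\in U,\ K(x,y_i)\le z_i\ \forall i\}$ from the open set $\{z:\ z_i<c\ \forall i\}$; include that separation argument to avoid any appearance of circularity. What your route buys is an elementary proof using only compactness, the finite intersection property, and one finite-dimensional separation; what the paper's citation buys is brevity and the extra strength of Rockafellar's saddle-function theory.
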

The proof of Lemma \ref{Min-Max} may be referred to Corollary 37.3.2 of \cite{Rockafellar1970}.

\subsection{Part 1} Proof of lemmas  related to the upper bound estimate.

\begin{lem} \label{2d basis}
Let $\zb(t)\in \mathbb{R}^d$ be any measurable process for $t\in [0,T]$. Suppose there exist numbers $a$ and $\delta$ such that for each $\pb \in \mathbb{R}^d$ with $\|\pb\|=1$,
$$\mathbb{P}\left(\sup_{0\le t\le T}\langle \zb(t), \pb \rangle \ge a \right) \le \delta.$$
Then
$$\mathbb{P}\left(\sup_{0\le t\le T}\|\zb(t)\|\ge a\sqrt{d}\right) \le 2d \delta.$$
\end{lem}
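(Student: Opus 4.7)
The plan is a short pigeonhole and union-bound argument over the $2d$ coordinate directions $\pm\eb_{1},\ldots,\pm\eb_{d}$. The key observation is purely deterministic and geometric: for any $\vb\in\mathbb{R}^{d}$ one has $\|\vb\|\le\sqrt{d}\,\|\vb\|_{\infty}$, so if $\|\vb\|\ge a\sqrt{d}$ then $|v_{i_{0}}|\ge a$ for some coordinate $i_{0}$, which means $\langle\vb,\sigma\eb_{i_{0}}\rangle\ge a$ for an appropriate sign $\sigma\in\{+1,-1\}$.

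First I would apply this observation pointwise in $t$, combined with the elementary inequality $\sup_{t}\max_{i}|z_{i}(t)|\le \max_{i}\sup_{t}|z_{i}(t)|$, to obtain the set inclusion
$$
\Bigl\{\sup_{0\le t\le T}\|\zb(t)\|\ge a\sqrt{d}\Bigr\}\;\subset\; \bigcup_{i=1}^{d}\bigcup_{\sigma\in\{\pm 1\}}\Bigl\{\sup_{0\le t\le T}\langle \zb(t),\sigma\eb_{i}\rangle \ge a\Bigr\}.
$$
Second I would apply a union bound to the right-hand side. Since each $\sigma\eb_{i}$ is a unit vector in $\mathbb{R}^{d}$, the standing hypothesis gives probability at most $\delta$ for each of the $2d$ events, and summing yields the stated bound $2d\delta$.

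The argument is essentially routine and I do not foresee a genuine obstacle. The only mildly delicate point is justifying the set inclusion, since the outer supremum $\sup_{t}\|\zb(t)\|$ need not be attained; this is handled by the two deterministic inequalities above (the first relates $\ell^{2}$ and $\ell^{\infty}$ norms of $\zb(t)$ for each fixed $t$, and the second swaps sup over $t$ with max over $i$), neither of which requires any measurability assumption beyond what is built into the statement.
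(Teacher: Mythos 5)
Your proof is correct and follows essentially the same route as the paper: the event $\{\sup_{t}\|\zb(t)\|\ge a\sqrt d\}$ is included in the union of the $2d$ events $\{\sup_{t}\langle\zb(t),\pm\eb_i\rangle\ge a\}$, and a union bound with the hypothesis applied to the unit vectors $\pm\eb_i$ gives $2d\delta$. Your extra care about the supremum not being attained (via $\|\vb\|\le\sqrt d\,\|\vb\|_\infty$ pointwise and swapping $\sup_t$ with $\max_i$) is a valid filling-in of the paper's one-line "it is not difficult to find" step.
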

\begin{proof}
It is not difficult to find that
$$\Big\{\sup_{0\le t\le T}\|\zb(t)\| \ge a\sqrt{d}\Big\} \subset \bigcup_{i=1}^{2d}\Big\{\sup_{0\le t\le T}\langle \zb(t), \pb_i \rangle \ge a\Big\}.$$
where $\pb_i:=\eb_i$, $\pb_{i+d}:=-\eb_i$ for $i=1,\ldots, d$, and $\eb_i$ are chosen as the canonical orthonormal basis in Euclidean space $\mathbb{R}^d$.
\end{proof}

In later texts, we will take an abused notation $\xi_{n}(t)=\eb_{i}\in \mathbb{R}^{D}$ when $\xi_n(t)=i\in \mathbb{Z}_{D}$. This will not bring confusion since $\xi_{n}(t)$ is considered as a multidimensional vector only when we take inner product with other vectors.
\begin{lem}\label{lemma1}
There exists a function $K:\mathbb{R}^{+}\rightarrow \mathbb{R}^{+}$ with
$$\mathop{\lim}\limits_{a\to \infty}K(a)/a=+\infty,$$
such that
\begin{equation}\label{eq:DistInit}
\mathbb{P}\left(\mathop{\sup}\limits_{0\le t\le T}\|\zb_n(t)-\zb_n(0)\|\ge a\right)\le 2d\exp\left(-nTK\left(\frac{a}{T}\right)\right).
\end{equation}
\end{lem}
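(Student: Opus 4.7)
\bigskip

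\noindent\textbf{Proof plan for Lemma \ref{lemma1}.}

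The plan is to reduce the multi-dimensional supremum to $2d$ one-dimensional suprema via Lemma \ref{2d basis}, and to control each of them by Doob's maximal inequality applied to a suitable exponential martingale. Concretely, for a fixed unit vector $\pb\in\mathbb R^d$ I would introduce
\[
M_t^{\pb,\theta}:=\exp\!\left(n\theta\langle \zb_n(t)-\zb_n(0),\pb\rangle - n\int_0^t H_\pb\bigl(\theta;\zb_n(s),\xi_n(s)\bigr)\,ds\right),
\]
with
\[
H_\pb(\theta;\zb,j):=\sum_{i=1}^S\lambda_i(\zb,j)\bigl(e^{\theta\langle \ub_i,\pb\rangle}-1\bigr).
\]
A standard Doléans--Dade computation (or an explicit infinitesimal-generator check on $e^{n\theta\langle \zb_n(t),\pb\rangle}$ using the form \eqref{eq:fullsystem}) shows that $M^{\pb,\theta}$ is a nonnegative $\mathcal F_t$-local martingale; since jumps are bounded by $\max_i\|\ub_i\|/n$ and the rates are bounded by Assumption \ref{ass:main}, localization and Fatou yield that $M^{\pb,\theta}$ is a bona fide supermartingale with $M_0=1$.

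Next I would exploit the uniform upper bound $\lambda_i(\zb,j)\le\Lambda$ to dominate $H_\pb$ deterministically. Since for $\theta\ge 0$ the elementary inequality $\lambda_i(e^{\theta\langle\ub_i,\pb\rangle}-1)\le\Lambda(e^{\theta\|\ub_i\|}-1)$ holds (separately on the sets where $\langle\ub_i,\pb\rangle$ is positive and nonpositive), one obtains
\[
\int_0^t H_\pb(\theta;\zb_n(s),\xi_n(s))\,ds \le T\bar H(\theta),\qquad \bar H(\theta):=\Lambda\sum_{i=1}^S\bigl(e^{\theta\|\ub_i\|}-1\bigr),
\]
for every $t\le T$. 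On the event $\{\sup_{0\le t\le T}\langle \zb_n(t)-\zb_n(0),\pb\rangle\ge b\}$ this gives $\sup_{t\le T} M_t^{\pb,\theta}\ge\exp\bigl(n\theta b-nT\bar H(\theta)\bigr)$, so Doob's maximal inequality for nonnegative supermartingales produces
\[
\mathbb P\!\left(\sup_{0\le t\le T}\langle \zb_n(t)-\zb_n(0),\pb\rangle\ge b\right) \le \exp\!\bigl(-n\theta b + nT\bar H(\theta)\bigr).
\]
Optimizing in $\theta\ge 0$ yields $\mathbb P(\sup_{t\le T}\langle \zb_n(t)-\zb_n(0),\pb\rangle\ge b)\le\exp\bigl(-nT\bar K(b/T)\bigr)$, with $\bar K(c):=\sup_{\theta\ge 0}(\theta c-\bar H(\theta))$ the Legendre transform of $\bar H$.

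Finally I would invoke Lemma \ref{2d basis} with $a'=b\sqrt d$ to assemble the one-sided estimates along $\pb=\pm\eb_i$ (whose unit norm gives a common bound) into
\[
\mathbb P\!\left(\sup_{0\le t\le T}\|\zb_n(t)-\zb_n(0)\|\ge a\right) \le 2d\,\exp\!\bigl(-nT\bar K\bigl(a/(T\sqrt d)\bigr)\bigr),
\]
and define $K(c):=\bar K(c/\sqrt d)$. The superlinear growth $K(a)/a\to\infty$ as $a\to\infty$ follows because $\bar H(\theta)/\theta\to\infty$ is false in general but $\bar H$ grows faster than any linear function of $\theta$ (since $e^{\theta\|\ub_i\|}-1$ does), a classical property which forces its Legendre dual to satisfy $\bar K(c)/c\to\infty$.

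The one delicate point is the supermartingale property of $M^{\pb,\theta}$ and the justification of the optional-stopping / maximal-inequality step for a pure-jump process with stochastic compensator; this is where the boundedness built into Assumption \ref{ass:main} is essential, and I expect this technical verification to be the main (though not deep) obstacle, while the Legendre-transform optimization and the reduction via Lemma \ref{2d basis} are routine.
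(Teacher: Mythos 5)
Your proposal is correct and follows essentially the same route as the paper: the paper also fixes a unit vector $\pb$, uses the mean-one exponential martingale (with the fast-variable tilt $\sigmab$ set to zero), bounds the compensator via $\lambda_i\le\Lambda$, applies Doob's maximal inequality, and then reduces to the coordinate directions through Lemma \ref{2d basis} with the same $\sqrt{d}$ rescaling of $K$. The only cosmetic difference is that you optimize the exponent as an exact Legendre transform of the dominating function $\bar H$, whereas the paper simply plugs in the explicit near-optimal choice $\rho=\frac{n}{U}\log\frac{a}{TS\Lambda U}$, which yields the same superlinear $K$.
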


\begin{proof} The inequality \eqref{eq:DistInit} holds trivially whenever $K(a/T)=0$. It suffice to prove the lemma when $a$ is large. For $\pb \in \mathbb{R}^d$, $\sigmab \in \mathbb{R}^D$ and any $\rho>0$, with the form of infinitesimal generator $\mathcal{L}_{n}$ (\ref{eq:fullsystem}), we define a mean one exponential martingale
\begin{align*}
M_t^{\sigmab}=&\exp \left(  {\left\langle {{\zb_n}\left( t \right) - {\zb_n}(0),\rho \pb } \right\rangle  - n\int_0^t {\sum\limits_{i = 1}^S {{\lambda _i}({\zb_n}(s),{\xi _n}(s))({e^{\left\langle {\rho \pb ,\ub_i/n} \right\rangle }} - 1)ds} } }  \right. \notag\\
+&\left.\langle {\xi _n}(t) - {\xi _n}(0),\sigmab \rangle  - n\int_0^t \sum\limits_{i = 1}^D \chi_{\{\xi _n(s) = i\}}\sum\limits_{j = 1}^D q_{ij} (\zb_n(s))(e^{\langle \sigmab ,\eb_{ij} \rangle } - 1)ds   \right).
\end{align*}

Define  $U = \max_{1\le i\le S}\|\ub_i\|$. Fix $\|\pb\|=1$, we have $$
 n{\int_0^t {\sum\limits_{i = 1}^S {{\lambda _i}({\zb_n}(s),{\xi _n}(s))({e^{\left\langle {\rho \pb ,\ub_i/n} \right\rangle }} - 1)ds} } } \le nt S\Lambda e^{U \rho/n}=: R(t,\rho)$$
by Assumption \ref{ass:main}. Hence we obtain
\begin{align*}
&\mathbb{P}\left(\mathop{\sup}\limits_{0\le t\le T}\left\langle {{\zb_n}\left( t \right) - {\zb_n}(0),\pb } \right\rangle\ge a\right)\\
=& \mathbb{P}\left(\mathop{\sup}\limits_{0\le t\le T}\exp \left(\rho\left\langle {{\zb_n}\left( t \right) - {\zb_n}(0),\pb } \right\rangle\right) \ge \exp(\rho a)\right)\\
\le& \mathbb{P}\left(\mathop{\sup}\limits_{0\le t\le T}M_t^{\sigmab=0}\ge \exp\Big(\rho a-R(T,\rho)\Big)\right)\\
\le& \exp\left( {nT\left[ {S\Lambda {e^{U\rho /n}} - \frac{\rho }{n}\frac{a}{T}} \right]} \right),
\end{align*}
where the inequality follows from Doob's martingale inequality.
Take$$
\rho=\frac{n}{U}\log \frac{a}{TS\Lambda U}>0.$$
Then it is not difficult to show that if we set
$$\tilde{K}(a)=\frac{a}{U}\left(\log \frac{a}{S\Lambda U}-1\right)$$
for $a$ large and $K(a)= 0$ otherwise, then $$
\mathbb{P}\left(\mathop{\sup}\limits_{0\le t\le T}\left\langle {{\zb_n}\left( t \right) - {\zb_n}(0),\pb } \right\rangle \ge a\right)\le \exp\left(-nT\tilde{K}\left(\frac{a}{T}\right)\right).$$
Define $K(a)=\tilde{K}(a/\sqrt{d})$, we get the desired estimate by applying Lemma \ref{2d basis}.
\end{proof}

\begin{cor}\label{cor of lemma1}
 There exist positive constants $c_1$ and $c_2$ independent of $t$ and $\tau$, such that for any $t,\tau\in[0,T]$ with $0 \le t+\tau \le T$,$$
\mathbb{P}\Big(\mathop {\sup}\limits_{t \le s \le t+\tau}\|\zb_n(s)-\zb_n(t)\| \ge a\Big) \le 2d\exp\left(-nac_1 \log\Big(\frac {ac_2}{\tau}\Big)\right).$$
\end{cor}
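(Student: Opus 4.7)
The plan is to derive Corollary \ref{cor of lemma1} directly from Lemma \ref{lemma1} by exploiting the time-homogeneity of the joint Markov process $(\zb_n,\xi_n)$ and then unpacking the explicit form of the function $K$ that was constructed inside the proof of Lemma \ref{lemma1}.

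First I would observe that the generator $\mathcal{L}_n$ in \eqref{eq:fullsystem} has no explicit time dependence, so $(\zb_n,\xi_n)$ is a time-homogeneous strong Markov process. Conditioning on $\mathcal{F}_t$ and applying the Markov property, the law of $\{\zb_n(t+s)-\zb_n(t)\}_{0\le s\le \tau}$ starting from state $(\zb_n(t),\xi_n(t))$ has the same form as $\{\zb_n(s)-\zb_n(0)\}_{0\le s\le \tau}$ starting from a different initial condition. Crucially, the exponential martingale argument inside the proof of Lemma \ref{lemma1} only used the uniform upper bound $\lambda_i(\zb,j)\le \Lambda$ from \eqref{eq:LambdaLUBound}, which holds for every state. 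Consequently the bound in Lemma \ref{lemma1} applies verbatim with $T$ replaced by $\tau$, yielding
\begin{equation*}
\mathbb{P}\Big(\sup_{t\le s\le t+\tau}\|\zb_n(s)-\zb_n(t)\|\ge a\Big)\le 2d\exp\Big(-n\tau K\big(a/\tau\big)\Big),
\end{equation*}
uniformly in the starting time $t$ and the starting state.

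Next I would substitute the explicit formula $K(a)=\tilde K(a/\sqrt d)$ with $\tilde K(a)=\tfrac{a}{U}\big(\log\tfrac{a}{S\Lambda U}-1\big)$ extracted from the proof above. A short computation gives
\begin{equation*}
n\tau K(a/\tau)=\frac{na}{\sqrt d\,U}\Big(\log\frac{a}{\tau\sqrt d\,S\Lambda U}-1\Big)=\frac{na}{\sqrt d\,U}\log\frac{a}{e\tau\sqrt d\,S\Lambda U},
\end{equation*}
since $\log x-1=\log(x/e)$. Setting $c_1:=1/(\sqrt d\,U)$ and $c_2:=1/(e\sqrt d\,S\Lambda U)$, both independent of $t$ and $\tau$, produces exactly the advertised bound.

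There is no real obstacle here; the content of the corollary is essentially a restatement of Lemma \ref{lemma1} under a time translation together with an algebraic rewriting of the rate function in the logarithmic form requested. The only mild point worth verifying is that the martingale construction in Lemma \ref{lemma1} is insensitive to the initial state and initial time, which is automatic from the uniform boundedness of the propensity functions in Assumption \ref{ass:main} and the time-homogeneity of the generator. For small $a$ (where $\tilde K$ was defined to be $0$) the inequality is trivial because $\log(ac_2/\tau)$ may be non-positive, so the non-trivial content is in the regime where $a/\tau$ is large enough to make the exponent negative.
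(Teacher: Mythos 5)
Your proposal is correct and is exactly the derivation the paper intends: the corollary is stated without separate proof as the time-shifted version of Lemma \ref{lemma1}, obtained via the Markov property (using that the martingale estimate only needs the state-uniform bound $\lambda_i\le\Lambda$) together with rewriting $n\tau K(a/\tau)$ as $nac_1\log(ac_2/\tau)$. Your explicit constants $c_1=1/(\sqrt d\,U)$, $c_2=1/(e\sqrt d\,S\Lambda U)$ and your remark that the bound is trivial precisely when $\log(ac_2/\tau)\le 0$ (matching the regime where $K$ was set to zero) close the argument.
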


\noindent{\bf Proof of  Lemma \ref{distance between z and y}}. Consider a typical interval $[t_j^n,t_{j+1}^n]$. Since $\zb_n(t)$ and $\tilde{\zb}_n(t)$ agree at the endpoints of this interval, it is obvious that
$$\|\tilde{\zb}_n(t_j^n)-\tilde{\zb}_n(t_{j+1}^n)\|>\frac{\delta}{2}\quad{\rm implies}\quad \|\zb_n(t_{j+1}^n)-\zb_n(t_j^n)\|>\frac{\delta}{2}.$$
On the other hand, we have
$$\|\zb_n(t)-\zb_n(t_j^n)\|\ge\|\zb_n(t)-\tilde{\zb}_n(t)\|-\|\tilde{\zb}_n(t_{j+1}^n)-\tilde{\zb}_n(t_j^n)\|$$
since $\tilde{\zb}_n$ is piecewise linear and $\tilde{\zb}_n(t_j^n)=\zb_n(t_j^n)$. Therefore if $\|\zb_n(t)-\tilde{\zb}_n(t)\|>\delta$ for some $t$ in the $j$th interval, we must have$$
\mathop{\sup}\limits_{t_j^n\le t\le t_{j+1}^n}\|\zb_n(t)-\zb_n(t_j^n)\| \ge \delta/2.$$

Applying Corollary \ref{cor of lemma1} with $a=\delta/2$ and $\tau=T/n$ we obtain
$$\mathbb{P}\left(\mathop {\sup}\limits_{t_j^n \le t \le t_{j+1}^n}\|\zb_n(t)-\zb_n(t_j^n)\| \ge \delta/2\right) \le 2d \exp\left(-n\frac{\delta c_1}{2} \log\Big(\frac{n\delta c_3}{2}\Big)\right),$$
 where $c_3= c_2/T$. Thus,
\begin{align*}
\mathbb{P}(\rho^{(1)}(\zb_n,\tilde{\zb}_n)>\delta) \le& \sum_{j=0}^{n-1}\mathbb{P}\left(\mathop{\sup}\limits_{t_j^n \le t \le t_{j+1}^n}\|\zb_n(t)-\tilde{\zb}_n(t)\|>\delta\right)\\
\le& \sum_{j=0}^{n-1}\mathbb{P}\left(\mathop{\sup}\limits_{t_j^n \le t \le t_{j+1}^n}\|\zb_n(t)-\zb_n(t_j^n)\|>\delta/2\right)\\
\le& n\cdot 2d \exp\left(-n\frac{\delta c_1}{2} \log\Big(\frac{n\delta c_3}{2}\Big)\right).
\end{align*}
The result follows since $c_1$ and $c_3$ are positive constants.\qed

%%%%%%%%%%%%%%%%%%%%%%%%%%%%%%%%%%%%%%%%%%%%%%%%%%%%%%%%%%%%%%%%%%%%%%%%%%%%%%%%%%%%%%
\begin{lem}\label{local lemma}
For any given bounded sets $A_1\in\mathbb{R}^d$ and $A_2\in\mathbb{R}^D$, we have that
\begin{eqnarray*}
&&\mathop{\lim \sup}\limits_{n \to \infty}{\mathbb{E}_{\xb,m}}\exp \left\{ n\left\langle {{\tilde{\zb}_n}\left( {\frac{T}
{n}} \right) - {\tilde{\zb}_n}(0),\pb } \right\rangle -{n\int_0^{T/n} {H_s^\delta  \left( {\xb,\pb ,n_{\nu_n}(t,\cdot)} \right)dt} } \right.\\
&&~~~~   +\left.\left\langle {\xi _n}\left( {\frac{T}
{n}} \right) - {\xi _n}(0),\sigmab  \right\rangle  +  n\int_0^{T/n} S^\delta(\xb,n_{\nu_n}(t,\cdot),\sigmab)dt \right\}\le 1.
\end{eqnarray*}
holds uniformly in $\xb\in\mathbb{R}^d$, $m\in \{1,2,\cdots,D\}$, $\pb\in A_1$ and $\sigmab\in A_2$, where $\mathbb{E}_{\xb,m}$ means the expectation with respect to the paths of $(\zb_{n},\xi_n)$ starting from $(\xb,m)$ at $t=0$.
\end{lem}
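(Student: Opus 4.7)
The plan is to rewrite the expression inside the expectation as a perturbation of the mean-one exponential martingale constructed in the proof of Lemma~\ref{lemma1}, and then to bound that perturbation separately on a concentration event and its complement. Taking $\rho = n$ in the martingale of Lemma~\ref{lemma1} and using the pointwise identities $\sum_i\lambda_i(\zb_n(s),\xi_n(s))(e^{\langle\pb,\ub_i\rangle}-1) = H_s(\zb_n(s),\pb,n_{\nu_n}(s,\cdot))$ and $-\sum_j q_{\xi_n(s),j}(\zb_n(s))(e^{\langle\sigmab,\eb_j-\eb_{\xi_n(s)}\rangle}-1) = S(\zb_n(s),n_{\nu_n}(s,\cdot),\sigmab)$, both of which follow from the occupation-measure identification $n_{\nu_n}(s,j) = \chi_{\{\xi_n(s)=j\}}$, Dynkin's formula for $\mathcal{L}_n$ produces a mean-one martingale
\[
M_t = \exp\Bigl(n\langle\zb_n(t)-\zb_n(0),\pb\rangle + \langle\xi_n(t)-\xi_n(0),\sigmab\rangle - n\!\int_0^t\! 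H_s(\zb_n(s),\pb,n_{\nu_n}(s,\cdot))\,ds + n\!\int_0^t\! S(\zb_n(s),n_{\nu_n}(s,\cdot),\sigmab)\,ds\Bigr).
\]
Since $\tilde{\zb}_n$ agrees with $\zb_n$ at the grid points $t_0^n = 0$ and $t_1^n = T/n$, the random variable $M^\delta_{T/n}$ appearing in the expectation of Lemma~\ref{local lemma} is obtained from $M_{T/n}$ by replacing the path-evaluated $H_s, S$ with the frozen $H_s^\delta(\xb,\cdot,\cdot)$ and $S^\delta(\xb,\cdot,\cdot)$.

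Introduce the concentration event $E_\delta := \{\sup_{0\le s\le T/n}\|\zb_n(s)-\xb\|<\delta\}$. Since $H_s^\delta(\xb,\pb,\wb)$ is the supremum of $H_s$ over $|\yb-\xb|<\delta$ and $S^\delta(\xb,\wb,\sigmab)$ is the infimum of $S$ over the same ball (reading \eqref{eq:SDeltaDef} with the $\xb$ inside $q$ as intended), on $E_\delta$ we have pointwise $H_s(\zb_n(s),\pb,\wb)\le H_s^\delta(\xb,\pb,\wb)$ and $S(\zb_n(s),\wb,\sigmab)\ge S^\delta(\xb,\wb,\sigmab)$, hence $M^\delta_{T/n}\le M_{T/n}$ on $E_\delta$ and therefore $\mathbb{E}_{\xb,m}[M^\delta_{T/n}\chi_{E_\delta}]\le \mathbb{E}_{\xb,m}[M_{T/n}]=1$. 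On $E_\delta^c$, Assumption~\ref{ass:main} combined with the boundedness of $\pb\in A_1$ and $\sigmab\in A_2$ forces $H_s, H_s^\delta, S, S^\delta$ to be uniformly bounded in absolute value; consequently $n$ times the $[0,T/n]$-integral of each is bounded by a constant $C^{\ast}$ depending only on $A_1, A_2, T, \Lambda, S, D, U$, so $M^\delta_{T/n}\le e^{C^{\ast}}M_{T/n}$ pointwise. Combining these two bounds gives $\mathbb{E}_{\xb,m}[M^\delta_{T/n}]\le 1 + e^{C^{\ast}}\mathbb{E}_{\xb,m}[M_{T/n}\chi_{E_\delta^c}]$.

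The remaining and most delicate step, which I expect to be the main obstacle, is to show that $\mathbb{E}_{\xb,m}[M_{T/n}\chi_{E_\delta^c}]\to 0$ uniformly in $(\xb,m,\pb,\sigmab)$. The plan is to apply Cauchy--Schwarz, which bounds this quantity by $\sqrt{\mathbb{E}_{\xb,m}[M_{T/n}^2]}\cdot\sqrt{\mathbb{P}_{\xb,m}(E_\delta^c)}$. The probability factor decays faster than any exponential in $n$ by Corollary~\ref{cor of lemma1} applied with $\tau = T/n$ and $a = \delta$, and the decay is uniform in $\xb, m$ since the bound there depends only on increments of $\zb_n$. For the second-moment factor, I will dominate $\|\zb_n(T/n)-\zb_n(0)\|\le (U/n)N$, where $N$ is the total number of reaction firings on $[0,T/n]$, and this $N$ is stochastically dominated by a Poisson variable with parameter $TS\Lambda$ thanks to the upper bound $\lambda_i\le\Lambda$ in \eqref{eq:LambdaLUBound}. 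Because the moment generating function of a Poisson variable is finite, $\mathbb{E}_{\xb,m}[M_{T/n}^2]$ is bounded by a constant depending only on $A_1, A_2, U, S, \Lambda, T$, again uniform in the stated parameters. The super-exponential decay of $\mathbb{P}(E_\delta^c)$ then comfortably absorbs the finite second-moment bound, yielding $\limsup_n\mathbb{E}_{\xb,m}[M^\delta_{T/n}]\le 1$ uniformly in $\xb\in\mathbb{R}^d$, $m\in\mathbb{Z}_D$, $\pb\in A_1$, and $\sigmab\in A_2$, as required.
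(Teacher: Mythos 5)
Your proposal is correct, and it follows the paper's argument in all but the last step: you use the same mean-one exponential martingale generated by $\mathcal{L}_n$ (the paper's $M_t$ in this lemma is exactly your $\rho=n$ normalization of the martingale from Lemma \ref{lemma1}), the same identification $n_{\nu_n}(s,j)=\chi_{\{\xi_n(s)=j\}}$ to rewrite the integrals through $H_s$ and $S$, and the same split along a concentration event (the paper takes radius $\delta/2$ rather than $\delta$, which is immaterial), on which $H_s^\delta(\xb,\cdot,\cdot)$ dominates the path-evaluated $H_s$ and $S^\delta(\xb,\cdot,\cdot)$ is dominated by the path-evaluated $S$, so the target expression is bounded by the martingale and its restricted expectation is at most one; your reading of the typo in \eqref{eq:SDeltaDef}, with the supremum acting on $q_{ij}(\xb)$, is the intended one. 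Where you genuinely diverge is the complementary event: the paper bounds the $H_s^\delta$-, $S^\delta$- and $\xi$-contributions by a uniform constant $e^{3K}$ and then slices $S_\delta^c$ into shells $\{k\delta/2\le \sup_{0\le t\le T/n}\|\zb_n(t)-\xb\|\le (k+1)\delta/2\}$, estimating $n\langle\tilde{\zb}_n(T/n)-\tilde{\zb}_n(0),\pb\rangle$ by $n(k+1)\delta B_1/2$ on the $k$-th shell and summing the resulting series against the increment tail bound of Corollary \ref{cor of lemma1}, whereas you bound $M^\delta_{T/n}\le e^{C^\ast}M_{T/n}$ pointwise and apply Cauchy--Schwarz, which requires the extra ingredient of a uniform second-moment bound $\mathbb{E}_{\xb,m}[M_{T/n}^2]\le C'$; that bound does hold, since all integral and $\xi$-terms are $O(1)$ under Assumption \ref{ass:main} and the number of slow jumps on $[0,T/n]$ is stochastically dominated by a Poisson variable of mean $S\Lambda T$ (each of the $S$ channels has rate at most $n\Lambda$), giving a finite exponential moment uniform in $n$, $\xb$, $m$, $\pb\in A_1$, $\sigmab\in A_2$. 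Both treatments are valid and yield the required uniformity: the paper's shell decomposition works directly from the tail estimate at every scale and needs no moment computation, while your route is shorter once the Poisson-domination moment bound is recorded, at the price of invoking that additional (but elementary and uniform) estimate.
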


\begin{proof} For any $\pb\in A_1$ and $\sigmab\in A_2$, define the mean one exponential martingale
\begin{align*}
{M_t} =& \exp \left( n\left[ {\left\langle {{\zb_n}\left( t \right) - {\zb_n}(0),\pb } \right\rangle  - \int_0^t {\sum\limits_{i = 1}^S {{\lambda _i}({\zb_n}(s),{\xi _n}(s))({e^{\left\langle {\pb ,{\ub_i}} \right\rangle }} - 1)ds} } } \right] \right.\\
&+ \left. \left\langle {{\xi _n}(t) - {\xi _n}(0),\sigmab } \right\rangle  - n\int_0^t {\sum\limits_{i = 1}^D {\chi_{\{\xi _n(s) = i\}}\sum\limits_{j = 1}^D {{q_{ij}}} ({\zb_n}(s))({e^{\left\langle {\sigmab ,{\eb_{ij}}} \right\rangle }} - 1)ds} }  \right).
\end{align*}
Since $\tilde{\zb}_n(t_j^n)=\zb_n(t_j^n)$, for any $\pb\in A_1$ we have
\begin{align*}
1=&{\mathbb{E}_{\xb,m}}\exp \left\{ n\left[ \left\langle {{\tilde{\zb}_n}\left( {\frac{T}
{n}} \right) - {\tilde{\zb}_n}(0),\pb } \right\rangle \right.\right.\\
& \left.\left.- \int_0^{T/n} \sum\limits_{i = 1}^S \sum\limits_{j = 1}^D {\lambda _i}({\zb_n}(s),j)({e^{\left\langle {\pb ,{\ub_i}} \right\rangle }} - 1)\nu_n(ds,j)   \right] \right.\\
&+\left. \left\langle {{\xi _n}\left( {\frac{T}
{n}} \right) - {\xi _n}(0),\sigmab } \right\rangle  - n\int_0^{T/n} {\sum\limits_{i,j = 1}^D {{{q_{ij}}} ({\zb_n}(s))({e^{\left\langle {\sigmab ,{\eb_{ij}}} \right\rangle }} - 1)\nu_n(ds,i)} }  \right\}.
\end{align*}
By definition, the term
$$\sum\limits_{i = 1}^S \sum\limits_{j = 1}^D {{\lambda _i}({\zb_n}(s),j)({e^{\left\langle {\pb ,{\ub_i}} \right\rangle }} - 1)\nu_n(ds,j)} $$
can be written as $H_s(\zb_n(s),\pb,n_{\nu_n}(s,\cdot))ds$ and $$
-{\sum\limits_{i = 1}^D {\sum\limits_{j = 1}^D {{q_{ij}}} ({\zb_n}(s))({e^{\left\langle {\sigmab ,{\eb_{ij}}} \right\rangle }} - 1)\nu_n(ds,i)} }$$
 can be written as $ S(\zb_n(s),n_{\nu_n}(s,\cdot),\sigmab)ds$.

Let$$S_{\delta}= \left\{\omega:\mathop{\sup}\limits_{0\le t \le T/n}\|\zb_n(t)-\xb\|<\frac{\delta}{2}\right\}, $$
we have
\begin{eqnarray}\notag
1 &\geqslant& {\mathbb{E}_{\xb,m}} \chi_{S_\delta }\exp \left\{\left( {n\left\langle {{\tilde{\zb}_n}\left( {\frac{T}
{n}} \right) - {\tilde{\zb}_n}(0),\pb } \right\rangle  - n\int_0^{T/n} {H_s^\delta  \left( {\xb,\pb ,n_{\nu_n}(t,\cdot)} \right)dt} } \right) \right.\\\notag
&&+\left. \left\langle {{\xi _n}\left( {\frac{T}
{n}} \right) - {\xi _n}(0),\sigmab } \right\rangle  + n\int_0^{T/n} S^\delta(\xb,n_{\nu_n}(t,\cdot),\sigmab)dt  \right\}\\\notag
&=& {\mathbb{E}_{\xb,m}}\exp \left\{\left( {n\left\langle {{\tilde{\zb}_n}\left( {\frac{T}
{n}} \right) - {\tilde{\zb}_n}(0),\pb } \right\rangle  - n\int_0^{T/n} {H_s^\delta  \left( {\xb,\pb ,n_{\nu_n}(t,\cdot)} \right)dt} } \right) \right.\\\notag
&& +\left. \left\langle {{\xi _n}\left( {\frac{T}
{n}} \right) - {\xi _n}(0),\sigmab } \right\rangle  + n\int_0^{T/n} S^\delta(\xb,n_{\nu_n}(t,\cdot),\sigmab)dt  \right\}\\\notag
&&- {\mathbb{E}_{\xb,m}}\chi_{S_\delta^c } \exp  \left\{\left( {n\left\langle {{\tilde{\zb}_n}\left( {\frac{T}
{n}} \right) - {\tilde{\zb}_n}(0),\pb } \right\rangle  - n\int_0^{T/n} {H_s^\delta  \left( {\xb,\pb ,n_{\nu_n}(t,\cdot)} \right)dt} } \right) \right.\\ \label{kv}
&& +\left. \left\langle {{\xi _n}\left( {\frac{T}
{n}} \right) - {\xi _n}(0),\sigmab } \right\rangle  + n\int_0^{T/n} S^\delta(\xb,n_{\nu_n}(t,\cdot),\sigmab)dt  \right\}
\end{eqnarray}
Since $A_1$ and $A_2$ are bounded sets, there exist $B_1$ and $B_2$ such that $\|\pb\|\le B_1$ and $\|\sigmab\| \le B_2$.
From the Assumption \ref{ass:main} and the boundedness of $\pb$ and $\sigmab$,  we have
\begin{align}\notag
&{\mathbb{E}_{\xb,m}}\chi_{S_\delta^c } \exp  \left\{\left( {n\left\langle {{\tilde{\zb}_n}\left( {\frac{T}
{n}} \right) - {\tilde{\zb}_n}(0),\pb } \right\rangle  - n\int_0^{T/n} {H_s^\delta  \left( {\xb,\pb ,n_{\nu_n}(t,\cdot)} \right)dt} } \right) \right.\\\notag
&+\left. \left\langle {{\xi _n}\left( {\frac{T}
{n}} \right) - {\xi _n}(0),\sigmab } \right\rangle  + n\int_0^{T/n} S^\delta(\zb_n(t),n_{\nu_n}(t,\cdot),\sigmab)dt  \right\}\\\notag
\le&{\mathbb{E}_{\xb,m}}\left( \chi_{S_\delta ^c}\exp  \left(n\left\langle {{\tilde{\zb}_n}\left( {\frac{T}
{n}} \right) - {\tilde{\zb}_n}(0),\pb } \right\rangle +3K \right)\right)\\\notag
\leqslant& \sum\limits_{k = 1}^\infty  {\exp \left( {n(k + 1)\frac{\delta }
{2}|\pb |+3K} \right)}  \times {\mathbb{P}}\left( {\frac{{k\delta }}{2}
\leqslant \mathop {\sup }\limits_{0 \leqslant t \leqslant T/n} |{\zb_n}(t) - \xb|
\leqslant \frac{{\left( {k + 1} \right)\delta }}
{2}} \right)\notag \\
 \leqslant& \sum\limits_{k = 1}^\infty  {2d\exp \left( {n\left( {(k + 1)\frac{\delta }
{2}{B_1} - \frac{{k\delta {c_1}}}
{2}\log \left( {\frac{{k\delta {c_2}n}}{{2T}}} \right)} \right)} \right)}\times e^{3K} \rightarrow 0  \label{e1}
\end{align}
as $n$ goes to infinity for all $\xb\in \mathbb{R}^d$ with $\|\pb\|\le B_1$ and $\|\sigmab\| \le B_2$, where $K$ is a uniform bound depending on the bounds of $S^\delta(\cdot,\cdot,\cdot)$ and $H^\delta_s(\cdot,\cdot,\cdot)$ in the whole space, $B_1$, $B_2$ and $T$. Combining
\eqref{e1} and \eqref{kv}, we complete the proof.
\end{proof}

%%%%%%%%%%%%%%%%%%%%%%%%%%%%%%%%%%%%%%%%%%%%%%%%%%%%%%%%%%%%%%%%%
\begin{cor} \label{cor of local lemma}
For any fixed step functions $\thetab(t)\in \mathbb{R}^d$ and $\alphab(t)\in \mathbb{R}^D$,  there exist constants $C>0$ and $n_0$ such that$$
\mathbb{E} \exp \{nJ_n(\tilde{\zb}_n,\thetab,\nu_n,\alphab)\} \le C$$
for all $n>n_0$, where $J_n$ is defined  in \eqref{eq:Jn}.
\end{cor}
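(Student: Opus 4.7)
The plan is to decompose $nJ_n$ into the $n$ subinterval contributions on $[t_j^n, t_{j+1}^n]$, apply the Markov property of the pair process $(\zb_n, \xi_n)$ to condition successively, and invoke Lemma \ref{local lemma} inside each conditional expectation. The only subtlety is that Lemma \ref{local lemma} controls an exponential containing the extra term $\langle \xi_n(T/n) - \xi_n(0), \sigmab\rangle$ that is absent from $J_n$; I absorb this discrepancy through a deterministic telescoping estimate.

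First I would fix $n_0$ large enough that the uniform grid $\{t_j^n = Tj/n\}$ refines the discontinuity sets of both step functions $\thetab$ and $\alphab$, and let $A_1 \subset \mathbb{R}^d$, $A_2 \subset \mathbb{R}^D$ be the (finite, hence bounded) sets of values taken by $\thetab$ and $\alphab$, respectively. Introduce the augmented quantity
\begin{equation*}
n\widetilde{J}_n := nJ_n(\tilde\zb_n, \thetab, \nu_n, \alphab) + \sum_{j=0}^{n-1} \langle \xi_n(t_{j+1}^n) - \xi_n(t_j^n),\, \alphab(t_j^n) \rangle.
\end{equation*}
On each maximal interval where $\alphab$ is constant the $\xi_n$-increment subsum telescopes to a single difference $\xi_n(\mathrm{end}) - \xi_n(\mathrm{start})$; since $\xi_n$ takes values in $\{\eb_1, \ldots, \eb_D\}$ and $\alphab$ has only finitely many constancy pieces, the entire correction is bounded in absolute value by a constant $C_\alphab$ independent of $n$. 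Hence $\mathbb{E}[\exp(nJ_n)] \le \exp(C_\alphab) \cdot \mathbb{E}[\exp(n\widetilde{J}_n)]$.

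Next I would iterate. Writing $\exp(n\widetilde{J}_n) = \prod_{j=0}^{n-1} e^{nT_j}$ with $nT_j$ denoting the $j$-th summand, each $nT_j$ is a function of $(\zb_n, \xi_n)$ restricted to $[t_j^n, t_{j+1}^n]$ together with the endpoint value $\zb_n(t_j^n)$; by the strong Markov property, the conditional distribution given $\mathcal{F}_{t_j^n}$ coincides with the length-$T/n$ functional appearing in Lemma \ref{local lemma} when started at $(\zb_n(t_j^n), \xi_n(t_j^n))$ with parameters $\pb = \thetab(t_j^n) \in A_1$ and $\sigmab = \alphab(t_j^n) \in A_2$. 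Lemma \ref{local lemma} therefore yields $\mathbb{E}[e^{nT_j} \mid \mathcal{F}_{t_j^n}] \le 1 + \epsilon_n$ with $\epsilon_n$ uniform in $j$, in the conditioning, and in $(\pb,\sigmab) \in A_1 \times A_2$. Iterating the tower property from $j = n-1$ down to $j = 0$ delivers $\mathbb{E}[\exp(n\widetilde{J}_n)] \le (1+\epsilon_n)^n$.

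The main obstacle is ensuring that $(1+\epsilon_n)^n$ remains bounded rather than merely subexponential in $n$. For this I would inspect the quantitative tail estimate \eqref{e1} inside the proof of Lemma \ref{local lemma}: the error originates from the event $\{\sup_{0\le t \le T/n}\|\zb_n(t) - \xb\| \ge \delta/2\}$, whose probability is dominated by Corollary \ref{cor of lemma1} and is of order $\exp(-cn\log n)$ for some $c > 0$. Consequently $\epsilon_n = O(\exp(-cn\log n))$ uniformly over the finite parameter sets $A_1, A_2$, so $n\epsilon_n \to 0$ and $(1+\epsilon_n)^n \to 1$. Combining with the first display yields $\mathbb{E}[\exp(nJ_n)] \le \exp(C_\alphab) \cdot (1+\epsilon_n)^n \le C$ for all $n$ sufficiently large, which is the claim.
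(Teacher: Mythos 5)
Your proof is correct and essentially the paper's own argument: you add back the telescoping $\xi_n$-increment terms (uniformly bounded in $n$ because $\alphab$ is a step function and $\xi_n$ takes finitely many values, so the sum collapses to finitely many bounded differences) and then iterate the Markov property of $(\zb_n,\xi_n)$ with Lemma \ref{local lemma} over the $n$ subintervals, with the added merit that you make explicit the superexponentially small error $\epsilon_n$ coming from \eqref{e1}, which is exactly what keeps $(1+\epsilon_n)^n$ bounded and is left implicit in the paper. The only superfluous step is requiring $n_0$ so large that the grid $\{Tj/n\}$ refines the discontinuity set of $\thetab$ and $\alphab$ — this is generally unachievable (irrational jump times) but also unnecessary, since your telescoping over maximal runs of constancy of $\alphab(t_j^n)$ works regardless.
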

\begin{proof}
By definition
\begin{align}\notag
\exp \{&nJ_n(\tilde{\zb}_n,\thetab,\nu_n,\alphab)\}\\\notag
= \exp&\left\{\sum\limits_{j = 0}^{n - 1}   \Bigg(  n\left\langle {\tilde \zb_n( {t_{j + 1}^n} ) - \tilde \zb_n( {t_j^n} ),\thetab ( {t_j^n} )} \right\rangle \right.\\ \notag
&- n\int_{t_j^n}^{t_j^{n + 1}} {H_{s}^\delta  \left( {\tilde \zb_n(t_j^n),\thetab (t_j^n),n_{\nu_n}(t,\cdot)} \right)dt} \\\notag
 &+ \left. n\int_{t_j^n}^{t_j^{n + 1}} S^\delta(\tilde \zb_n(t_j^n),n_{\nu_n}(t,\cdot),\alphab(t_j^n))dt  \Big)\right\}\\\notag
 \end{align}
 \begin{align}\notag
 = \exp&\left\{\sum_{j = 0}^{n - 1}   \left(  n\left\langle {\tilde \zb_n( {t_{j + 1}^n}) - \tilde \zb_n( {t_j^n}),\thetab ( {t_j^n} )} \right\rangle \right.\right.\\ \notag
  &- n\int_{t_j^n}^{t_j^{n + 1}} {H_{s}^\delta  \left( {\tilde \zb_n(t_j^n),\thetab (t_j^n),n_{\nu_n}(t,\cdot)} \right)dt} \\\label{estimation of J}
 &+ \left. \left\langle {{\xi _n}( {t_{j+1}^n}) - {\xi _n}(t_j^n),\alphab(t_j^n) } \right\rangle +n\int_{t_j^n}^{t_j^{n + 1}} S^\delta(\tilde \zb_n(t_j^n),n_{\nu_n}(t,\cdot),\alphab(t_j^n))dt  \Bigg)
\right. \notag \\
 &\left.-\sum_{j=0}^{n-1}\left\langle {{\xi _n}( {t_{j+1}^n}) - {\xi _n}(t_j^n),\alphab(t_j^n) } \right\rangle\right\}.
\end{align}
Now $\alphab$ is a step function, let us first consider $\alpha(t)=\alpha_0$ on the interval $t\in[0,\tau]$. We have
\begin{eqnarray*}
&&\sum_{j=0}^{n-1} \chi_{\{t_{j+1}^{n}\le \tau\}}\left\langle {{\xi _n}( {t_{j+1}^n}) - {\xi _n}(t_j^n),\alphab_0 } \right\rangle\\
&=&\left\langle {{\xi _n}\left( \frac{\left\lfloor {n\tau} \right\rfloor}{n}
 \right) - {\xi _n}(0),\alphab_0 } \right\rangle
\end{eqnarray*}
where $\left\lfloor {a} \right\rfloor$ is the largest integer smaller than $a$.
 Since $\xi _n$ and $\alphab$  are bounded in [0,T], $\left|\left\langle {{\xi _n}\left( \frac{\left\lfloor {n\tau} \right\rfloor}{n}
 \right) - {\xi _n}(0),\alphab_0 } \right\rangle\right|$ is uniformly bounded. Repeating this argument on the finite number of intervals on which $\alphab$ are constants, we have that
 $\left|\sum_{j=0}^{n-1}\left\langle {{\xi _n}( {t_{j+1}^n}) - {\xi _n}(t_j^n),\alphab(t_j^n) } \right\rangle\right|$ is bounded.  Thus by (\ref{estimation of J}), Lemma \ref{local lemma} and the Markov property of $(\zb_n,\xi_n)$,
 $$ \mathop{\lim \sup}\limits_{n\to \infty} \mathbb{E} \exp \{nJ_n(\tilde{\zb}_n,\thetab,\nu_n,\alphab)\} \le C$$
 where $C$ is a positive constant.
 \end{proof}

\begin{lem}\label{uniformly absolute continuity}
(Uniformly absolute continuity)  Given $\rb \in \mathbb D^d[0,T]$ and $\nu \in \mathbb{M}_L[0,T]$. Let $I_s(\rb,\nu)+I_{f}(\rb,\nu)\le K$ and fix some $\epsilon>0$. Then there is a $\delta>0$, independent of $\rb$, such that for any collection of non-overlapping intervals in $[0,T]$ with total length $\delta$
$$\Big\{[t_j,s_j], j=1,\cdots, J\Big\}\quad {\rm with} \quad \sum_{j=1}^J (s_j-t_j)=\delta,$$
we have
$$ \sum_{j=1}^J \|\rb(s_j)-\rb(t_j)\|<\epsilon.$$
We can also find a constant $B$ depending only on $\epsilon$ and $K$ so that
$$\int_0^T \chi_{\{\|\dot \rb(t)\|\ge B\}}dt\le \epsilon.$$
\end{lem}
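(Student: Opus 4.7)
The plan is to reduce both assertions to a single basic inequality derived from the Fenchel--Young inequality. Since $I_s(\rb,\nu)+I_f(\rb,\nu)\le K$ forces $I_s(\rb,\nu)<\infty$, the path $\rb$ must be absolutely continuous with $\int_0^T L_s(\rb(t),\dot\rb(t),n_\nu(t,\cdot))\,dt\le K$. From the definition of $L_s$ as the Legendre--Fenchel conjugate of $H_s$, for every $\pb\in\mathbb{R}^d$ and almost every $t$,
\[ \langle \pb,\dot\rb(t)\rangle \le L_s(\rb(t),\dot\rb(t),n_\nu(t,\cdot)) + H_s(\rb(t),\pb,n_\nu(t,\cdot)). \]
Under Assumption~\ref{ass:main}, the uniform bound $\lambda_i\le\Lambda$ from \eqref{eq:LambdaLUBound} together with $\sum_j w_j=1$ yields the crude but sufficient estimate $H_s(\zb,\pb,\wb)\le S\Lambda(e^{U\|\pb\|}-1)$, where $U=\max_i\|\ub_i\|$; crucially this holds uniformly in $(\zb,\wb)$.

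The first step is to substitute the measurable selection $\pb=\rho\,\dot\rb(t)/\|\dot\rb(t)\|$ (and $\pb=0$ where $\dot\rb(t)=0$) into Fenchel--Young, obtaining
\[ \rho\|\dot\rb(t)\|\le L_s(\rb(t),\dot\rb(t),n_\nu(t,\cdot))+S\Lambda(e^{\rho U}-1) \qquad \text{a.e.\ in } t. \]
Integrating over any measurable $A\subset[0,T]$ gives the master estimate
\[ \rho\int_A\|\dot\rb(t)\|\,dt\le K+|A|\,S\Lambda(e^{\rho U}-1), \]
which will drive both claims with appropriate choices of $\rho$ and $A$.

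Next I would derive the first assertion by taking $A=\bigcup_{j=1}^{J}[t_j,s_j]$, so that $|A|=\delta$ and $\sum_j\|\rb(s_j)-\rb(t_j)\|\le \int_A\|\dot\rb\|\,dt$ by absolute continuity. The master estimate then gives
\[ \sum_{j=1}^J\|\rb(s_j)-\rb(t_j)\| \le \frac{K}{\rho}+\frac{\delta\,S\Lambda(e^{\rho U}-1)}{\rho}. \]
Given $\epsilon>0$, first pick $\rho$ large enough that $K/\rho<\epsilon/2$, and then $\delta$ small enough that the remaining term is below $\epsilon/2$. Both choices depend only on $\epsilon,K$ (and the fixed constants $S,\Lambda,U$), hence are independent of the particular $\rb$.

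Finally, for the second assertion I would apply the master estimate to the super-level set $A_B:=\{t\in[0,T]:\|\dot\rb(t)\|\ge B\}$, which gives $\rho B|A_B|\le K+|A_B|S\Lambda(e^{\rho U}-1)$ and hence $|A_B|\le K/(\rho B-S\Lambda(e^{\rho U}-1))$ once the denominator is positive. Fixing $\rho=1$ and choosing $B$ large enough that $B-S\Lambda(e^{U}-1)\ge K/\epsilon$ produces $|A_B|\le\epsilon$, with $B$ determined solely by $\epsilon$ and $K$. No step presents a real obstacle here; the uniform exponential majorant on $H_s$ coming from Assumption~\ref{ass:main} reduces the whole lemma to elementary convex-duality bookkeeping, so the only point requiring mild care is the measurability of the selection $\pb(t)=\rho\,\dot\rb(t)/\|\dot\rb(t)\|$, which is immediate.
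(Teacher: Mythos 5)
Your argument is correct and is essentially the paper's own proof: both rest on the Fenchel--Young lower bound $L_s(\zb,\betab,\wb)\ge\langle\pb,\betab\rangle-S\Lambda\big(e^{U\|\pb\|}-1\big)$, uniform in $(\zb,\wb)$ thanks to the bound $\lambda_i\le\Lambda$ from Assumption \ref{ass:main}, combined with $\int_0^T L_s\,dt\le K$ and $L_s,\,S\ge 0$. The only cosmetic difference is bookkeeping: the paper encodes the superlinearity through $f(a)=\inf\{L_s(\zb,\betab,\wb)/\|\betab\|:\|\betab\|\ge a\}$ (taking $\pb=\betab\log\|\betab\|/(U\|\betab\|)$ and splitting at a threshold $a=1/\sqrt{\delta}$), whereas you use the affine minorants with $\|\pb\|=\rho$ fixed and optimize over $\rho$ at the end.
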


\begin{proof}
For any collection of non-overlapping intervals $\big\{[t_j,s_j]\big\}_j$, define the function $k(t)$ to be equal to one if $t$ is in some interval $[t_j,s_j]$ and zero otherwise. Since  $I_s(\rb,\nu)+I_{f}(\rb,\nu)\le K$, $\rb$ is absolutely continuous and $I_s(\rb,\nu)\le K$. For any $a>0$,
\begin{eqnarray*}
\sum_{j=1}^J \|\rb(s_j)-\rb(t_j)\| &\le& \int_0^T\|\dot \rb(t)\|k(t)dt\\
&\le& \int_0^T a\cdot \chi_{\{\|\dot \rb(t)\|\le a\}} k(t)dt\\
&&+\int_0^T\frac{L_s(\rb(t),\dot \rb(t),n_\nu(t,\cdot))}{L_s(\rb(t),\dot \rb(t),n_\nu(t,\cdot))/|\dot \rb(t)|}\chi_{\{\|\dot \rb(t)\|> a\}} k(t)dt\\
&\le& a \cdot \delta + \frac{K}{f(a)}
\end{eqnarray*}
where
$$f(a):=\mathop{\inf}\limits_{\zb,\betab, \wb \in\Delta_D }\left\{\frac{L_s(\zb,\betab,\wb)}{\|\betab\|}:\|\betab\|\ge a\right\}.$$
Recalling the definition of $L_s(\zb,\betab,\wb)$ in \eqref{eq:LsDef}, we define $U:=\max_i\|\ub_i\|$.
For any $\wb \in\Delta_D$ if we take
$\pb=\betab \log \|\betab\|/ (U \|\betab\|)$ in \eqref{eq:LsDef}, we obtain
$$L_{s}(\zb,\betab,\wb)\ge \|\betab\|\log \|\betab\|/ U - \|\betab\|S \Lambda.$$
This means that $f(a) \to \infty$ as $a\to \infty$. The choice $a=1/\sqrt{\delta}$ and taking $\delta$ sufficiently small establishes the uniformly absolute continuity.

Now we turn to the second statement. Since
\begin{eqnarray*}
\int_0^T \chi_{\{\|\dot \rb(t)\|\ge B\}}dt &\le& \frac {1}{B}\int_0^T\|\dot \rb(t)\|\chi_{\{\|\dot \rb(t)\|\ge B\}}dt\\
&\le& \frac {1}{B}\int_0^T\frac{L_s(\rb(t),\dot \rb(t),n_\nu(t,\cdot))}{L_s(\rb(t),\dot \rb(t),n_\nu(t,\cdot))/\|\dot \rb(t)\|}\chi_{{\{\|\dot \rb(t)\|\ge B\}}}dt\\
&\le& \frac {1}{B}\frac{I_s(\rb,\nu)}{f(B)},
\end{eqnarray*}
we complete the proof by choosing a sufficiently large $B$.
\end{proof}

%The proof of Lemma \ref{uniformly absolute continuity} can be made similarly as Lemma 5.18 in \cite{Shwartz1995}.

\begin{lem}\label{lower semicontinuity of I+F}
The rate functionals are lower semicontinuous, i.e., if $(\rb_n,\nu_n) \to (\rb,\nu)$ as $n\to \infty$, then
\begin{eqnarray}\label{lower 1}
\mathop{\lim \inf}_{n\to \infty}I_s(\rb_n,\nu_n)\ge I_s(\rb,\nu), \quad \mathop{\lim \inf}_{n\to \infty}I_{f}(\rb_n,\nu_n)\ge I_{f}(\rb,\nu),
\end{eqnarray}
\begin{eqnarray}\label{lower 0}
\mathop{\lim \inf}_{n\to \infty}I_s^\delta(\rb_n,\nu_n)\ge I^\delta_s(\rb,\nu), \quad
 \mathop{\lim \inf}_{n\to \infty}I_f^\delta(\rb_n,\nu_n)\ge I^\delta_f(\rb,\nu)
\end{eqnarray}
and
\begin{eqnarray}\label{lower 2}
\mathop{\lim \inf}_{n\to \infty}I_s^\delta(\rb_n,\nu_n,\thetab)\ge I_s^\delta(\rb,\nu,\thetab), \quad  \mathop{\lim \inf}_{n\to \infty} I_{f}(\rb_n,\nu_n,\alphab)\ge I_{f}^\delta(\rb,\nu,\alphab)
\end{eqnarray}
for any fix step functions $\thetab(t)\in R^d$ and $\alphab(t) \in R^D$.
\end{lem}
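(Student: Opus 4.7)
The plan is to represent each rate functional as a countable supremum (or, for the $\thetab$ and $\alphab$ versions, a direct integral) of jointly continuous functionals on $\mathbb{D}^d[0,T]\times\mathbb{M}_L[0,T]$ and then invoke Lemma \ref{semicontinuous}. I would first reduce to the case $\liminf_n(I_s(\rb_n,\nu_n)+I_f(\rb_n,\nu_n))<\infty$, since the inequalities are vacuous otherwise. Passing to a subsequence realising the $\liminf$, every $\rb_n$ is absolutely continuous with $\{I_s(\rb_n,\nu_n)\}$ bounded, and Lemma \ref{uniformly absolute continuity} produces a common modulus of absolute continuity. Together with the Skorohod convergence $\rb_n\to\rb$, this makes $\{\rb_n\}$ equicontinuous and forces $\rb_n\to\rb$ uniformly on $[0,T]$, with $\rb$ absolutely continuous; in particular, when the original $\rb$ is not absolutely continuous, the $\liminf$ must be $+\infty$ and the claim is automatic. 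The same reduction applies to $I_s^\delta$ because $L_s^\delta$ retains the superlinear growth in $\betab$ exploited in Lemma \ref{uniformly absolute continuity} (the bounds on $H_s^\delta$ coincide with those on $H_s$ under Assumption \ref{ass:main}).

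Next I would set up the variational representation. Fix a countable dense subset $\{\pb^{(k)}\}\subset C^1([0,T],\mathbb{R}^d)$ and define
\begin{equation*}
J_s(\rb,\nu;\pb):=\int_0^T\Big(\langle\pb(t),\dot\rb(t)\rangle-H_s\big(\rb(t),\pb(t),n_\nu(t,\cdot)\big)\Big)\,dt.
\end{equation*}
The Legendre duality \eqref{eq:LsDef}, coupled with a measurable selection of $\epsilon$-maximisers in $\pb$ and smooth truncation on bounded $\pb$-sets (justified by the boundedness and pointwise continuity of $H_s$ in $\pb$ under Assumption \ref{ass:main}), yields $I_s(\rb,\nu)=\sup_k J_s(\rb,\nu;\pb^{(k)})$ for absolutely continuous $\rb$. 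Analogous countable representations hold for $I_s^\delta$ (with $H_s$ replaced by $H_s^\delta$) and for $I_f,I_f^\delta$ using a countable dense family $\{\sigmab^{(k)}\}\subset C([0,T],\mathbb{R}^D)$ in the variational formulas \eqref{eq:SDef} and \eqref{Sdelta}. For the $\thetab$ and $\alphab$ versions the functional is already of the form $J_s(\rb,\nu;\thetab)$ (resp.\ its $S^\delta$ analogue), so no supremum is needed; the continuity argument below will give joint continuity at absolutely continuous $\rb$.

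Finally I would verify joint continuity of each $J_s(\cdot,\cdot;\pb^{(k)})$ along the subsequence. Integration by parts rewrites the linear term as $\langle\pb^{(k)}(T),\rb_n(T)\rangle-\langle\pb^{(k)}(0),\rb_n(0)\rangle-\int_0^T\langle\dot\pb^{(k)}(t),\rb_n(t)\rangle\,dt$, which converges by the uniform convergence $\rb_n\to\rb$ and the common initial value $\rb_n(0)=\zb^0$. Writing $H_s(\zb,\pb,\wb)=\sum_j w_j h_j(\zb,\pb)$ with $h_j(\zb,\pb):=\sum_i\lambda_i(\zb,j)(e^{\langle\pb,\ub_i\rangle}-1)$, the Hamiltonian term becomes $\sum_j\int h_j(\rb_n(t),\pb^{(k)}(t))\,\nu_n(dt,j)$; its integrand converges uniformly in $t$ to the continuous limit $h_j(\rb(t),\pb^{(k)}(t))$, so L\'evy-Prohorov convergence $\nu_n\to\nu$ yields convergence of the integral. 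Lemma \ref{semicontinuous} then delivers $\liminf_n I_s(\rb_n,\nu_n)\ge\sup_k J_s(\rb,\nu;\pb^{(k)})=I_s(\rb,\nu)$, which is one half of \eqref{lower 1}; the proof of the $I_f$ statement in \eqref{lower 1}, of \eqref{lower 0}, and of \eqref{lower 2} follows identically, using that $H_s^\delta$ and $S^\delta$ are Lipschitz in $\zb$ as suprema of Lipschitz functions over shifted balls. The main obstacle is the variational representation: justifying the exchange of supremum and integral and the adequacy of a countable family of smooth test functions requires measurable selection together with the uniform bounds supplied by Assumption \ref{ass:main}; once this is settled, every remaining step is a routine weak-convergence computation.
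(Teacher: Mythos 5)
Your argument is essentially correct, but it follows a genuinely different route from the paper. You prove lower semicontinuity by writing each functional as a countable supremum of functionals $J_s(\rb,\nu;\pb^{(k)})$ that are jointly continuous along the relevant sequences (duality plus an interchange of supremum and integral via measurable selection, truncation and Lusin-type smoothing), moving the derivative off $\rb_n$ by integration by parts and pairing the Hamiltonian term with the weakly convergent $\nu_n$; Lemma \ref{semicontinuous} then finishes the proof. The paper instead never leaves the Lagrangian side: it discretizes time, uses convexity of $L_s^\delta$ (Jensen) to bound the integral over each subinterval by $\Delta\, L_s^\delta$ evaluated at difference quotients of $\rb_n$ and of the cumulative occupation $F_n$, passes to the limit in $n$ using only convergence of these finitely many point values and the lower semicontinuity of $L_s^\delta$, and then lets the nested partitions refine ($J_k=2^k$, $\delta_k\downarrow 0$) with Fatou and Lebesgue differentiation to recover the integrand $L_s(\rb,\dot\rb,n_\nu)$. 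What your approach buys is that once the variational representation is in place, all six inequalities (including the $\thetab$, $\alphab$ versions, where no supremum is even needed) follow from one routine weak-convergence computation; the price is that the sup--integral interchange is the real content and must be carried out carefully, and that your continuity verification leans on uniform convergence of $\rb_n$, which you obtain from the $I_s$-bound via Lemma \ref{uniformly absolute continuity}. The paper's discretization avoids any measurable-selection machinery and needs only pointwise convergence at partition points, at the cost of the heavier bookkeeping with $\rb_J$, $F_J$ and the nested limits. One caveat to flag: for the $I_f$-only inequality in \eqref{lower 1} your reduction to uniformly convergent, equicontinuous $\rb_n$ is not available (a bound on $I_f$ alone gives no control on $\rb_n$), so strictly you prove that statement only along sequences with bounded $I_s$; you can repair this by noting that the time marginals of all $\nu_n$ are Lebesgue, so a.e. convergence of $q_{ij}(\rb_n(\cdot))$ at continuity points of $\rb$ suffices for the pairing. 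The paper's own proof makes the same a priori boundedness reduction, so this looseness is shared, and the lemma is only ever invoked in that regime.
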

\begin{proof} We only need to consider the sequences of $\rb_n$ which are absolutely continuous since it will be trivial otherwise. Let $(\rb_n,\nu_n) \to (\rb,\nu)$ as $n\to \infty$. We may assume that $I_s(\rb_n,\nu_n)+I_{f}(\rb_n,\nu_n)$ is bounded, say by a constant $K$. By Lemma \ref{uniformly absolute continuity}, we know that $\rb$ is also absolutely continuous.

Since $\rb(t)$ is absolutely continuous in $[0,T]$, given $\delta$, we can partition the interval $[0,T]$ into $J$ intervals $0=t_1\le t_2\le \cdots\le t_{J+1}=T$ each of length $\Delta$ such that
$$ \mathop{\max}_{j} \mathop{\sup}_{t_j\le t\le t_{j+1}}\|\rb_n(t)-\rb_n(t_j)\|< \delta.  $$
Denote $F_n(t,i)=\nu_n([0,t],i)$ and $F(t,i)=\nu([0,t],i)$. Recalling the definition of $L_{s}^\delta(\zb,\betab,\wb)$ in \eqref{eq:LsDeltaDef}, we have that $L_s^\delta(\zb,\betab,\wb))$ is lower semicontinuous in $\delta$, $\zb$, $\betab$ and $\wb$ and convex in $\betab$ and $\wb$ by Lemmas \ref{semicontinuous} and \ref{convex}. Thus for any $\epsilon>0$ and small enough $\Delta$, we have
\begin{align}\notag
&\int_0^TL_s(\rb_n(t),\dot{\rb}_n(t),n_{\nu_n}(t,\cdot))dt \\ \notag
\ge& \sum_{j=1}^{J}\int_{t_j}^{t_{j+1}}L_s^\delta(\rb_n(t_j),\dot{\rb}_n(t),n_{\nu_n}(t,\cdot))dt\\ \notag
\ge& \sum_{j=1}^{J}\Delta \cdot L_s^\delta\left(\rb_n(t_j),\frac{\int_{t_j}^{t_{j+1}}\dot{\rb}_n(t)dt}{\Delta},\frac{\int_{t_j}^{t_{j+1}}n_{\nu_n}(t,\cdot)dt}{\Delta}\right)dt\\ \label{lowsemi and convex0}
=& \sum_{j=1}^{J}\Delta \cdot L_s^\delta\left(\rb_n(t_j),\frac{\rb_n(t_{j+1})-\rb_n(t_{j})}{\Delta},\frac{F_n(t_{j+1},\cdot)-F_n(t_{j},\cdot)}{\Delta}\right).
\end{align}

Define the functions $\rb_J$, $F_J$ as
$$\rb_J(t)=\rb(t_j),\quad F_J(t,\cdot)=F(t_j,\cdot)\quad \text{for}~ t_j\le t<t_{j+1} , j=1,\cdots,J $$
%$$,$$
and let
$$\rb^J(t):=\rb_J(t+\Delta),\quad F^J(t,\cdot):=F_J(t+\Delta,\cdot)\quad\text{for}~ 0\le t<T-\Delta.$$

By (\ref{lowsemi and convex0}), we have
\begin{eqnarray*}
&&\mathop{\lim \inf}_{n\to \infty}\int_0^TL_s(\rb_n(t),\dot\rb_n(t), n_{\nu_n}(t,\cdot))dt\\
&\ge&\sum_{j=1}^{J}\int_{t_j}^{t_{j+1}}\mathop{\lim \inf}_{n\to \infty} L_s^\delta\left(\rb_n(t_j),\frac{{\rb}_{n}(t_{j+1})-{\rb}_{n}(t_{j})}{\Delta},\frac{F_{n}(t_{j+1},\cdot)-F_{n}(t_{j},\cdot)}{\Delta}\right)dt\\
&\ge&\sum_{j=1}^{J-1}\int_{t_j}^{t_{j+1}} L_s^\delta\left(\rb_J(t),\frac{\rb^J(t)-\rb_J(t)}{\Delta},\frac{F^J(t,\cdot)-F_J(t,\cdot)}{\Delta}\right)dt\\
&=&\int_{0}^{T-\Delta}  L_s^\delta\left(\rb_J(t),\frac{\rb^J(t)-\rb_J(t)}{\Delta},\frac{F^J(t,\cdot)-F_J(t,\cdot)}{\Delta}\right)dt.
\end{eqnarray*}
Now we use the nested partitions $J_k=2^k$, so that $\Delta_k=T/2^k$ and a corresponding sequence $\delta_k$ that converges to zero.
By Fatou's Lemma,
 \begin{eqnarray*}
&&\mathop{\lim \inf}_{k\to \infty}\int_{0}^{T-\Delta_k} L_s^{\delta_k}\left(\rb_{J_k}(t),\frac{\rb^{J_k}(t)-\rb_{J_k}(t,\cdot)}{\Delta},\frac{F^{J_k}(t,\cdot)-F_{J_k}(t,\cdot)}{\Delta}\right)dt\\
&\ge&\int_{0}^{T}\mathop{\lim \inf}_{k\to \infty}\chi_{\{t\le T-\Delta_k\}}  L_s^{\delta_k}\left(\rb_{J_k}(t),\frac{\rb^{J_k}(t)-\rb_{J_k}(t,\cdot)}{\Delta},\frac{F^{J_k}(t,\cdot)-F_{J_k}(t,\cdot)}{\Delta}\right)dt\\
&\ge&\int_0^TL_s(\rb(t),\dot{\rb}(t),n_\nu(t,\cdot))dt.
\end{eqnarray*}
So we established the lower semicontinuity of $I_s(\rb,\nu)$.

 The lower semicontinuity of $I_f(\rb,\nu)$ can be done similarly.  Recall the definition of $S^\delta(\zb,\wb)$ in \eqref{Sdelta}, we have that $S^\delta(\zb,\wb)$ is lower semicontinuous in $\delta$, $\zb$ and $\wb$ and convex in $\wb$ by Lemmas \ref{semicontinuous} and \ref{convex}. With exactly similar procedure as proving the lower semicontinuity of $I_s(\rb,\nu)$, we can establish
\begin{eqnarray*}
\mathop{\lim \inf}_{n\to \infty}\int_0^TS(\rb_n(t),n_{\nu_n}(t,\cdot))dt \ge \int_{0}^{T-\Delta} S^\delta\left(\rb_J(t),\frac{F^J(t,\cdot)-F_J(t,\cdot)}{\Delta}\right)dt
\end{eqnarray*}
for a fine enough partition. Again we consider the sequence of nested partition $J_k=2^k$ and $\Delta_k=T/2^k$.
By Fatou's Lemma and the lower semicontinuity of $S$,
 \begin{eqnarray*}
&&\mathop{\lim \inf}_{k\to \infty}\int_{0}^{T-\Delta_k} S^{\delta_k}\left(\rb_{J_k}(t),\frac{F^{J_k}(t,\cdot)-F_{J_k}(t,\cdot)}{\Delta}\right)dt\\
&\ge&\int_{0}^{T}\mathop{\lim \inf}_{k\to \infty}\chi_{\{t\le T-\Delta_k\}}  S^{\delta_k}\left(\rb_{J_k}(t),\frac{F^{J_k}(t,\cdot)-F_{J_k}(t,\cdot)}{\Delta}\right)dt\\
&\ge&\int_0^TS(\rb(t),n_\nu(t,\cdot))dt.
\end{eqnarray*}
Thus we obtain the lower semicontinuity of $I_{f}(\rb,\nu)$. The proof of \eqref{lower 0} and \eqref{lower 2} are simlar.
\end{proof}

 \begin{lem}\label{step function}
 Given $\rb \in \mathbb D^d[0,T]$, $\nu \in \mathbb{M}_L[0,T]$ and $\epsilon>0$, there exist step functions $\thetab(t) \in R^d$ and $\alphab(t) \in R^D$ such that
  \begin{eqnarray}\label{eq:step0}
I_{s}(\rb,\nu,\thetab)\ge I_{s}(\rb,\nu)-\epsilon,
\end{eqnarray}
 \begin{eqnarray}\label{eq:step1}
I_{s}^\delta(\rb,\nu,\thetab)\ge I_{s}^\delta(\rb,\nu)-\epsilon,
\end{eqnarray}
and
 \begin{eqnarray}\label{eq:step2}
I_f(\rb,\nu,\alphab)\ge I_f(\rb,\nu)-\epsilon,
\end{eqnarray}
\begin{eqnarray}\label{eq:step3}
I_f^\delta(\rb,\nu,\alphab)\ge I_f^\delta(\rb,\nu)-\epsilon.
\end{eqnarray}
 \end{lem}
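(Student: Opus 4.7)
The plan is to reduce all four inequalities in Lemma \ref{step function} to the same measurable-selection-plus-approximation template, carried out in detail for \eqref{eq:step0}; the remaining three require only cosmetic changes. If $\rb$ is not absolutely continuous then $I_{s}(\rb,\nu)=I_{s}(\rb,\nu,\thetab)=+\infty$ and \eqref{eq:step0} is vacuous, so assume $\rb$ is AC, whence $\dot\rb\in L^{1}([0,T];\mathbb{R}^{d})$. I may also assume $I_{s}(\rb,\nu)<\infty$ (otherwise one repeats the argument below with the target quantity $\min\{I_{s},M\}$ for arbitrarily large $M$).

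Introduce the Carath\'eodory integrand
\[
\Psi(t,\pb) := \langle \pb, \dot\rb(t)\rangle - H_{s}(\rb(t), \pb, n_{\nu}(t, \cdot)),
\]
which is jointly measurable in $t$ and concave (hence continuous) in $\pb$, so that $L_{s}(\rb(t),\dot\rb(t),n_{\nu}(t,\cdot))=\sup_{\pb\in\mathbb{R}^{d}}\Psi(t,\pb)$. Picking a countable dense subset $\{\pb_{k}\}_{k\ge 1}\subset\mathbb{R}^{d}$ and setting
\[
k(t):=\min\bigl\{k\ge 1:\Psi(t,\pb_{k})\ge L_{s}(\rb(t),\dot\rb(t),n_{\nu}(t,\cdot))-\epsilon/(4T)\bigr\},
\]
the function $\pb^{\ast}(t):=\pb_{k(t)}$ is Borel measurable and integration yields
\[
\int_{0}^{T}\Psi(t,\pb^{\ast}(t))\,dt\ge I_{s}(\rb,\nu)-\epsilon/4.
\]

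Next I truncate and regularize. With $\pb_{N}^{\ast}(t):=\pb^{\ast}(t)\chi_{\{|\pb^{\ast}(t)|\le N\}}$ and noting $\Psi(t,0)=0$, the absolute continuity of the Lebesgue integral gives
\[
\int_{0}^{T}\Psi(t,\pb_{N}^{\ast}(t))\,dt\ge\int_{0}^{T}\Psi(t,\pb^{\ast}(t))\,dt-\epsilon/4
\]
for $N$ sufficiently large. Lusin's theorem applied to the bounded measurable $\pb_{N}^{\ast}$ produces, for any $\eta>0$, a step function $\thetab\colon[0,T]\to\mathbb{R}^{d}$ with $\|\thetab\|_{\infty}\le N+1$ equal to $\pb_{N}^{\ast}$ outside a measurable set $A$ of Lebesgue measure $<\eta$. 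Since $H_{s}(\rb(t),\cdot,n_{\nu}(t,\cdot))$ is uniformly continuous on the ball $\{|\pb|\le N+1\}$ (thanks to the bound $\lambda_{i}\le\Lambda$ from Assumption \ref{ass:main}), one obtains
\[
\Bigl|\int_{0}^{T}\Psi(t,\thetab(t))\,dt-\int_{0}^{T}\Psi(t,\pb_{N}^{\ast}(t))\,dt\Bigr|\le 2(N+1)\int_{A}|\dot\rb(t)|\,dt+C(N)|A|,
\]
which is $<\epsilon/2$ once $\eta$ is small enough (using $\dot\rb\in L^{1}$ and the absolute continuity of its integral). Chaining the three displayed inequalities gives \eqref{eq:step0}.

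The main obstacle will be the delicate interplay between the measurable selection (which produces an a priori unbounded $\pb^{\ast}$) and the Lusin step (which needs boundedness); the truncation step is the bridge, and it rests on the fact that $\Psi(t,\cdot)$ vanishes at $\pb=0$, so replacing $\pb^{\ast}$ by $\pb_{N}^{\ast}$ deletes the large-$|\pb^{\ast}|$ contributions to the integral in a controlled way. Statement \eqref{eq:step1} is proved identically with $H_{s}$ replaced by $H_{s}^{\delta}=\sup_{|\xb-\zb|<\delta}H_{s}$, which remains continuous in $\pb$ and bounded on bounded balls because $H_{s}$ is jointly continuous in $(\zb,\pb,\wb)$ and bounded on such sets. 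Statements \eqref{eq:step2} and \eqref{eq:step3} follow the same three-step template applied to
\[
\tilde\Psi(t,\sigmab):=-\sum_{i,j=1}^{D}n_{\nu}(t,i)q_{ij}(\rb(t))\bigl(e^{\langle\sigmab,\eb_{ij}\rangle}-1\bigr),
\]
which is jointly measurable in $t$ and concave and continuous in $\sigmab$; no $\dot\rb$ term appears and the absolute-continuity reduction is unnecessary, so the argument there is strictly simpler.
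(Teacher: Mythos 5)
Your proof is correct, but it takes a genuinely different route to the key selection step than the paper does. The paper first zeroes out $\thetab$ on the set $\{\|\dot\rb(t)\|\ge B\}$ using the uniform absolute continuity estimate of Lemma \ref{uniformly absolute continuity}, and then, on the remaining bounded region $\{\|\zb\|\le R,\ \betab\in\mathcal{C},\ \|\betab\|\le B,\ \wb\in\Delta_D\}$, invokes continuity and uniform near-optimality results imported from Shwartz--Weiss (their Lemmas 5.20, 5.22, 5.23, 5.33) together with a Heine--Borel covering to pick \emph{finitely many} momenta $\pb_{\zb^i\betab^j\wb^k}$, producing a simple function $\thetab_1$ that is finally replaced by a step function agreeing with it off a set of small measure (citing Royden). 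You instead make a pointwise measurable selection $\pb^{\ast}(t)$ from a countable dense set (the same device the paper uses in Lemma \ref{comm}), truncate it in magnitude using the facts that $\Psi(t,\boldsymbol 0)=0$ and that $L_s(\rb(t),\dot\rb(t),n_\nu(t,\cdot))\ge 0$ is integrable when $I_s<\infty$, and then pass to a step function by approximation in measure, controlling the error through $\dot\rb\in L^1$ and the boundedness of $H_s$ (resp.\ $H_s^\delta$, $S$) on bounded momentum balls, which follows from $\lambda_i\le\Lambda$ and $\log q_{ij}$ bounded in Assumption \ref{ass:main}. What each approach buys: yours is more elementary and self-contained (no appeal to the Shwartz--Weiss continuity lemmas, no separate treatment of the large-$\dot\rb$ set, since the truncation and the $L^1$ bound absorb it), and it applies verbatim to $I_s^\delta$, $I_f$, $I_f^\delta$; the paper's version produces a $\thetab$ built from a finite, uniformly chosen family of momenta, and its reliance on the cited continuity results is what the authors later recheck (via Shwartz--Weiss 2005) when weakening Assumption \ref{ass:main} to Assumption \ref{ass:second} in Section 4 --- your argument, as written, uses the two-sided bound $1/\Lambda\le\lambda_i\le\Lambda$ and so lives at the level of Theorem \ref{Main result}, which is exactly where this lemma is used. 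Two small points: the step-function approximation off a small set is Royden's approximation in measure rather than Lusin's theorem proper (harmless here, since your truncated selector is countably valued), and your explicit caveat replacing $I_s$ by $\min\{I_s,M\}$ when $I_s=\infty$ for an absolutely continuous path is appropriate --- the paper's own proof implicitly assumes finiteness at the same point, since Lemma \ref{uniformly absolute continuity} is stated under $I_s+I_f\le K$.
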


The proof of \eqref{eq:step0} and \eqref{eq:step1} can be referred to Lemma 5.43 in \cite{Shwartz1995} and the proof of \eqref{eq:step2} and \eqref{eq:step3} is similar. We will outline the main procedure here.

\begin{proof}
If $\rb$ is not absolutely continuous, $I_f(\rb,\nu,\alphab)=\infty$ by definition, so nothing needs to be proved. Now let us consider the case that $\rb$ is absolutely continuous. For convenience, let $L_{s}(\zb,\betab,\wb,\pb):=\left\langle\pb,\betab\right\rangle-H_{s}(\zb,\pb,\wb)$.
Since by definition $L_{s}(\zb,\betab,\wb,\pb)\le L_{s}(\zb,\betab,\wb)$ for any $\pb$,  we have for $B$ large enough
\begin{eqnarray*}
&&\int_0^T\chi_{\{\|\dot \rb(t)\|\ge B\}}L_{s}(\rb(t),\dot \rb(t),n_\nu(t,\cdot),\thetab(t))dt\\
&\le& \int_0^T\chi_{\{\|\dot \rb(t)\|\ge B\}}L_{s}(\rb(t),\dot \rb(t),n_\nu(t,\cdot))dt\\
&\le& \epsilon/4
\end{eqnarray*}
by Lemma \ref{uniformly absolute continuity}. Choose ${\thetab}_1(t)= {\boldsymbol 0}$ whenever $\|\dot \rb(t)\|\ge B$ or $\dot \rb(t)$ is not in $\mathcal{C}$ as defined in \eqref{eq:ConeDef}. Let $R:= \sup_{0\le t\le T}\|\rb(t)\|$.
Since $\rb$ is continuous, $R$ is finite. Simply replacing $\lambda_i$ by $\sum_{j=1}^D \lambda_i(\zb,j)w_j$ in Lemma 5.23 of \cite{Shwartz1995}, we have for $B_1$ large enough,
$$\mathop{\sup}_{|\pb|\le B_1}L_{s}(\zb,\betab,\wb,\pb)\ge L_{s}(\zb,\betab,\wb)-\frac{\epsilon}{8T}$$
for all $\|\zb\|\le R$, $\|\betab\|\le B$ in $\mathcal{C}$ and $\wb$ in $\Delta_D$.
So for any $(\zb,\betab,\wb)$ in bounded set
$$A:=\big\{\|\zb\|\le R, \betab \in \mathcal{C}, \|\betab\|\le B, \wb\in \Delta_D\big\},$$
there exist a $\pb_{\zb \betab \wb}$ with  $\|\pb_{\zb \betab \wb}\| \le B_1$ such that
$$L_{s}(\zb,\betab,\wb,\pb_{\zb \betab \wb})\ge L_{s}(\zb,\betab,\wb)-\frac{\epsilon}{4T}.$$
On the bounded set
$$\big\{\|\zb\|\le R, \betab \in \mathcal{C}, \|\betab\|\le B, \wb\in \Delta_D, \|\pb\| \le B_1\big\},$$
the function $L_{s}(\zb,\betab,\wb,\pb)$ is uniformly continuous. What's more, by Lemmas 5.22 and 5.33 in \cite{Shwartz1995},
$L_{s}(\zb,\betab,\wb)$ is continuous in $A$. So given any $(\zb,\betab,\wb) \in A $, there exist a $\delta_{\zb\betab\wb}>0$ such that
$$L_{s}(\tilde \zb, \tilde \betab, \tilde \wb,\pb_{\zb \betab \wb})\ge L_{s}( \tilde \zb, \tilde \betab, \tilde \wb)-\frac{\epsilon}{2T}$$
holds for any $(\tilde \zb, \tilde \betab, \tilde  ) \in O_{\zb\betab\wb}\cap A$, where $O_{\zb\betab\wb}$ is the $\delta_{\zb\betab\wb}$-neighborhood of $(\zb,\betab,\wb)$.
By Heine-Borel theorem, we can choose finite number of $O_{\zb^i\betab^j \wb^k}$ to cover $A$. It means that
$$L_{s}(\zb,\betab,\wb,\pb_{\zb^i\betab^j \wb^k})\ge L_{s}(\zb,\betab,\wb)-\frac{\epsilon}{2T}$$
whenever $\|\zb-{\zb}^i\|+\|\betab-{\betab}^j\|+\|\wb-{\wb}^k\|\le \delta_{\zb^i\betab^j \wb^k}$.

Define the function ${\thetab}_1(t)={\pb}_{\zb^i\betab^j \wb^k}$ whenever $ \|\rb(t)-{\zb}^i\|+\|\dot \rb(t)-{\betab}^j\|+\|n_\nu(t,\cdot)-{\wb}^k\| \le \delta_{\zb^i\betab^j \wb^k}$ with some tie-breaking rule. The function $\thetab_1(t)$ takes finite number of values. It may not be constant on intervals of time. So we approximate $\thetab_1(t)$ by a step function. Choose $\eta$ small enough such that
$$ \int_0^T \chi_{\{t \in A \}}L_{s}(\rb(t),\dot \rb(t),n_\nu(t,\cdot))dt\le \epsilon/4$$
whenever the set A has measure less than $\eta$. Since $\thetab_1(t)$ is a simple function,  we can approximate it by a step function $\thetab$ and it agrees with $\thetab_1$ outside of a set of measure $\eta$ (c.f. \cite{Roy}).We finish the proof for \eqref{eq:step0}  by collecting all approximations above.

For the proof for \eqref{eq:step2}, we take advantage of Lemma 5.23 in \cite{Shwartz1995} again by replacing $\lambda_i$ with $\sum_{j=1}^D w_i q_{ij}$. We have for $C$ large enough
$$\mathop{\sup}_{\|\sigmab\|\le C}S(\zb,\wb,\sigmab)\ge S(\zb,\wb)-\frac{\epsilon}{4T}.$$
On the bounded set
$$\big\{\|\sigmab\| \le C, \|\zb\|\le R, \wb\in \Delta_D\big\},$$
the function $S(\zb,\wb,\sigmab)$ is uniformly continuous. With the similar strategy for $L_s$ we can find the desired step function $\alphab$. So we finish the proof for \eqref{eq:step2}. The proof for $I_s^\delta$ and $I_f^\delta$ are similar.
\end{proof}

\subsection{Part 2} Proof of lemmas  related to the lower bound estimate.

\noindent{\bf Proof of Lemma \ref{up 1}}.
Since $S(\zb,\wb)$ is bounded by $Q:= \sum_{i,j=1}^{D}\mathop{\sup}\limits_{\zb}q_{ij}(\zb)$, there exists $(\eta(s),\psi(s)) \in \mathcal S$ for any $s \in [t_m,t_{m+1}]$ such that
\begin{align*}
&\sum_{i=1}^D \psi_i(s)\sum_{j=1}^D\Big(\eta_{ij}(s)\log \frac{\eta_{ij}(s)}{q_{ij}(\yb(s))}+q_{ij}(\yb(s))-\eta_{ij}(s)\Big)\\
\le& S(\yb(s),n_\nu(s,\cdot))+\epsilon
\end{align*}
and  $$|\psi_i(s)- n_\nu(s,i)|< \epsilon/{(8DT)}$$
by Lemma 8.61 in \cite{Shwartz1995}. For each fixed $s \in [t_m,t_{m+1}]$, there exists $\delta_{s}>0$ such that
\begin{align*}
&\sum_{i=1}^D \psi_i(s)\sum_{j=1}^D\Big(\eta_{ij}(s)\log \frac{\eta_{ij}(s)}{q_{ij}(\yb(s))}+q_{ij}(\yb(s))-\eta_{ij}(s)\Big)\\
\le& S(\yb(t),n_\nu(t,\cdot))+2\epsilon
\end{align*}
and
$$|\psi_i(s)- n_\nu(t,i)|< \epsilon/{(4DT)} $$
hold for  any $t \in O_{s} =(s-\delta_{s},s+\delta_{s}) \cap [t_m,t_{m+1}]$.
By Heine-Borel theorem, we can choose finite number of $O_{s_k}$ in ${\{O_{s}\}}_{s \in [t_m,t_{m+1}]}$ to
cover $[t_m,t_{m+1}]$. It means that there exists  a further subdivision of  interval $[t_m,t_{m+1}]$ (i.e., $t_m=t_{m0}<t_{m1}<\cdots<t_{mK_m}=t_{m+1}$) and related $(\psi^m({s_k}),\eta^{m}(s_k))\in \mathcal S $ such that for all $t \in [t_{mk},t_{m,k+1}]$
\begin{eqnarray*}
&&\sum_{i=1}^D \psi^m_i(s_k)\sum_{j=1}^D\Big(\eta_{ij}^{m}(s_k)\log \frac{\eta_{ij}^{m}(s_k)}{q_{ij}(\yb(t))}+q_{ij}(\yb(t))-\eta_{ij}^{m}(s_k)\Big)\\
&\le&S(\yb(t),n_\nu(t,\cdot))+2\epsilon.
\end{eqnarray*}

 Since $\log q_{ij}(\zb)$ are bounded and Lipschitz continuous in $\zb$, we can establish that $S(\zb,\wb)$ is absolutely continuous in $\zb$, and this absolute continuity is uniform in $\wb \in \Delta_D$. To see this, we only need to show that  the function
$$f(\xb,\wb):=\mathop{\sup}\limits_{\sigmab \in \mathbb{R}^D}\left( -\sum_{i,j=1}^{D}w_ix_{ij}\left(e^{\left\langle\sigmab,\eb_{ij}\right\rangle}
-1\right)\right)$$
is absolutely continuous in $\xb=(x_{11},x_{12},\ldots,x_{DD})\in [1/\Lambda,\Lambda]^{D^2}$  (as defined in \eqref{eq:LambdaLUBound}), uniformly in $\wb \in \Delta_D$. For any $\xb, \xb+\Delta \xb \in [1/\Lambda,\Lambda]^{D^2}$ with $\|\Delta \xb\|\le 1/4\Lambda$, let $h=1/4\Lambda$, $r=\|\Delta \xb\|/(h+\|\Delta \xb\|)$
and define $\qb=\xb+\Delta \xb/r$. With this construction, we have $\qb\in [1/2\Lambda,M+1/2\Lambda]^{D^2}$, $f(\xb,\wb)$, $f(\xb+\Delta \xb,\wb)$, $f(\qb,\wb)\in [0, (\Lambda+1/2\Lambda)D^2]$ and  $\xb+\Delta \xb=(1-r)\xb+r\qb$. From the convexity of $f(\xb,\wb)$ in $\xb$, we have
$$f(\xb+\Delta \xb,\wb)\le (1-r)f(\xb,\wb)+r f(\qb,\wb)$$
and thus
\begin{align*}
f(\xb+\Delta \xb,\wb)- f(\xb,\wb)\le r (f(\qb,\wb)-f(\xb,\wb))
\le 4\Lambda\Big(\Lambda+\frac{1}{2\Lambda}\Big)D^2\|\Delta \xb\|.
\end{align*}
The absolute continuity in $\zb$ and uniformity in $\wb$ of $S(\zb,\wb)$ ensures that the estimate
$$S(\yb(t),n_\nu(t,\cdot))\le S(\rb(t),n_\nu(t,\cdot))+\epsilon$$
holds when $J$ is large enough.

To simplify the notation, we will rewrite $\eta^{m}(s_k)$ as $\eta^{mk}$ and $\psi^m(s_k)$ as $\psi^{mk}$. So for each $m$, we have
\begin{align*}
&\sum_{k=0}^{K_m-1}\int_{t_{mk}}^{t_{m,k+1}}\sum_{i=1}^D \psi^{mk}_i\sum_{j=1}^D\Big(\eta_{ij}^{mk}\log \frac{\eta_{ij}^{mk}}{q_{ij}(\yb(t))}+q_{ij}(\yb(t))-\eta_{ij}^{mk}\Big)dt\\
\le&\int_{t_m}^{t_{m+1}}S(\rb(t),n_\nu(t,\cdot))dt+3(t_{m+1}-t_m)\epsilon.
\end{align*}
The proof is completed. \qed\\

\noindent{\bf Proof of  Lemma \ref{up 2}.}
Define $$\tilde f(\mub,\lambda(\zb,\cdot),\wb):=\sum_{i=1}^S\Big(\sum_{j=1}^D\lambda_i(\zb,j)w_j
-\mu_{i}+\mu_{i}\log \frac{\mu_{i}}{\sum_{j=1}^D\lambda_i(\zb,j)w_j}\Big)$$
and
$$\tilde L_s(\zb,\betab,\wb) =\mathop{\inf}_{\mub \in \mathcal K_{\betab}}\tilde f(\mub,\lambda(\zb,\cdot),\wb).$$
Taking advantage of Theorem 5.26 of \cite{Shwartz1995}, we have
\begin{equation}\label{eq:LsTLs}
\tilde L_s(\zb,\betab,\wb)= L_s(\zb,\betab,\wb).
\end{equation}
We will show that for any $B_1$, $\tilde L_s(\zb,\betab,\wb)$ is continuous in $\zb$ and $\wb$, uniformly in $\betab$ in
 $$\mathcal{V}:=\{\betab\in \mathcal C, \|\betab\|\le B_1\},$$
 where $\mathcal{C}$ is the cone defined in \eqref{eq:ConeDef}.

 By Lemma 5.20 of
\cite{Shwartz1995}, we can find a constant $B_2$ such that for any $\betab \in \mathcal V$ there exists a $\mub \in \mathcal K_{\betab}$ with $\|\mub\|\le B_2$. Therefore, for all $\betab \in \mathcal V$ and any
$\mub \in \mathcal K_{\betab}$ with $\|\mub\|\le B_2$,
\begin{align*}
&\tilde L_s(\zb',\betab,\wb')-\tilde L_s(\zb,\betab,\wb)\\
\le& \tilde f(\mub,\lambda(\zb',\cdot),\wb')-\tilde L_s(\zb,\betab,\wb)\\
\le& \tilde f(\mub,\lambda(\zb,\cdot),\wb')-\tilde L_s(\zb,\betab,\wb)+C_1\|\zb'-\zb\|\\
\le& \tilde f(\mub,\lambda(\zb,\cdot),\wb)-\tilde L_s(\zb,\betab,\wb)+C_1\|\zb'-\zb\|+C_2\|\wb'-\wb\|
\end{align*}
for some positive constants $C_1$ and $C_2$. Now choose $\mub$ to minimize $\tilde L_s(\zb,\betab,\wb)$ to establish that
\begin{align}\label{le B}
\tilde L_s(\zb',\betab,\wb')-\tilde L_s(\zb,\betab,\wb)\le C_1\|\zb'-\zb\|+C_2\|\wb'-\wb\|.
\end{align}
By Lemma 5.17 and Lemma 5.32 of \cite{Shwartz1995} (replacing $\lambda_i(\xb)$ with $\sum_{j=1}^D\lambda_i(\xb,j)w_j$), we know that there exist positive constants $M_1$, $M_2$ and $B$ so that for all $\betab\in \mathcal C$ with $\|\betab\|\ge B$ , all $\zb\in \mathbb R^d$ and all $\wb\in \Delta_D$,
\begin{align}\notag
     M_1 \|\betab\|\log  \|\betab\| \le \tilde L_s(\zb,\betab,\wb) \le  M_2 \|\betab\|\log  \|\betab\|.
\end{align}
So for any $\qb\in \mathbb D^d[0,T]$ and any $\tilde \nu \in \mathbb M[0,T]$,
\begin{align}\notag
&\int_0^T\chi_{\{\|\dot \rb(t)\|\ge B\}}\tilde L_s(\qb(t),\dot \rb(t), n_{\tilde \nu}(t,\cdot))dt\\\notag
\le& \int_0^T\chi_{\{\|\dot \rb(t)\|\ge B\}} M_2 \|\dot \rb(t)\|\log  \|\dot \rb(t)\|dt\\\notag
\le& \int_0^T \chi_{\{\|\dot \rb(t)\|\ge B\}}\frac{M_2}{M_1} \tilde L_s(\rb(t),\dot \rb(t), n_{\nu}(t,\cdot))dt\\\label{ge B}
:=&\epsilon(B)
\end{align}
By Lemma \ref{uniformly absolute continuity}, we have $\epsilon(B) \to 0$ as $B\to \infty$.
Combining \eqref{le B} and \eqref{ge B}, we have for any $\epsilon>0$, there exist a $\delta>0$ such that
$$\mathop{\sup}_{0\le t \le T}\|\qb(t)-\rb(t)\|< \delta \quad \text{and} \quad \mathop{\sup}_{0\le t \le T} \|n_{\tilde \nu}(t,\cdot)-n_\nu(t,\cdot)\|<\delta$$
implies
\begin{align}
\left|\int_0^T\tilde L_s(\rb(t),\dot \rb(t), n_\nu(t,\cdot))dt-\int_0^T\tilde L_s(\qb(t),\dot \rb(t), n_{\tilde \nu}(t,\cdot))dt\right|\le \epsilon.
\end{align}

With this continuity property, we have
\begin{align*}
&\int_0^T\tilde L_s(\rb(t),\dot \rb(t), n_\nu(t,\cdot))dt\\
=&\sum_{m=0}^{J-1}\int_{t_m}^{t_{m+1}} \tilde L_s(\rb(t),\dot \rb(t), n_\nu(t,\cdot))dt\\
\ge &\sum_{m=0}^{J-1}\int_{t_m}^{t_{m+1}} \tilde L_s(\rb(t_m),\dot \rb(t), n_{\pi}(t_m,\cdot))dt-\epsilon\\
\ge & \sum_{m=0}^{J-1}\Delta \cdot  \tilde L_s\left(\rb(t_m),\frac{\rb(t_{m+1})-\rb(t_m)}{\Delta}, n_{\pi}(t_m,\cdot)\right)-\epsilon.
\end{align*}
By definition of $\tilde L_s$, for each $m$, we have $\mub^m \in K_{\betab_m}$ such that
\begin{align*}
&\tilde L_s\left(\rb(t_m),\frac{\rb(t_{m+1})-\rb(t_m)}{\Delta}, n_{\pi}(t_m,\cdot)\right)\\
\ge &\sum_{i=1}^S\Big(\lambda^{\pi}_{i}(\yb(t_m))-\mu_i^m+\mu_i^m\log \frac{\mu_i^m}{\lambda^{\pi}_{i}(\yb(t_m))}\Big)-\epsilon/T
\end{align*}
and finally we have
\begin{align}
&\int_0^T\tilde L_s(\rb(t),\dot \rb(t), n_\nu(t,\cdot))dt\notag\\
\ge& \sum_{m=0}^{J-1}\Delta \cdot \sum_{i=1}^S\Big(\lambda^{\pi}_{i}(\yb(t_m))-\mu_i^m+\mu_i^m\log \frac{\mu_i^m}{\lambda^{\pi}_{i}(\yb(t_m))}\Big)-2\epsilon\notag\\
\ge&\int_0^T\sum_{i=1}^S\Big(\lambda^{\pi}_{i}(\yb(t))-\mu_i^m+\mu_i^m\log \frac{\mu_i^m}{\lambda^{\pi}_{i}(\yb(t))}\Big)dt-3\epsilon. \label{eq:TLsBound}
\end{align}
 Lemma \ref{up 2} is proved by combing \eqref{eq:LsTLs} and \eqref{eq:TLsBound}.
\qed

\vspace*{10pt}
\noindent{\bf Proof of  Lemma \ref{covergence of tilde v}}. We  need to prove that for any bounded continuous function $h(t,z)$,
$$ \mathop{\lim}_{n\to \infty}\int_0^Th(t,\bar \xi_n(t))dt=\int_0^T\sum_{i=1}^Dh(t,i)n_{\pi}(t,i)dt$$
in probability. It suffices to prove that for each time interval $[t_{mk},t_{m,k+1}]$,
$$ \mathop{\lim}_{n\to \infty}\int_{t_{mk}}^{t_{m,k+1}}h(t,\bar \xi_n(t))dt=\int_{t_{mk}}^{t_{m,k+1}}\sum_{i=1}^Dh(t,i)n_{\pi}(t,i)dt.$$
Since $\bar \xi_n$ lives on only finite states, then for any $\epsilon>0$, there exists $\delta>0$ such that for $|t_k-t|<\delta$\\
$$ |h(t,\bar \xi_n(t))-h(t_k,\bar \xi_n(t))|<\epsilon,$$
for all $t\in [t_{mk},t_{m,k+1}]$.

Take an integer $L$ large enough and define $\tilde\delta = (t_{m,k+1}-t_{mk})/L<\delta$. Let $\tau_l = t_{mk}+l\tilde\delta$ for $l=0,1,\ldots,L$.  We have
\begin{align}
 &\mathop{\lim \sup}_{n\to \infty}\int_{t_{mk}}^{t_{m,k+1}}h(t,\bar \xi_n(t))dt\nonumber\\
= & \mathop{\lim \sup}_{n\to \infty}\sum_{l=0}^{N-1}\int_{\tau_l}^{\tau_{l+1}}h(t,\bar \xi_n(t))dt\nonumber\\
\le&\mathop{\lim \sup}_{n\to \infty}\sum_{l=0}^{N-1}\int_{\tau_l}^{\tau_{l+1}}h(\tau_l,\bar \xi_n(t))dt+T \epsilon\nonumber\\
=&\sum_{k=0}^{N-1}\int_{\tau_l}^{\tau_{l+1}}\sum_{i=1}^Dh(\tau_l,i)n_{\pi}(t,i)dt+T\epsilon\label{eq:Ergodic}\\
\le&\sum_{k=0}^{N-1}\int_{\tau_l}^{\tau_{l+1}}\sum_{i=1}^Dh(t,i)n_{\pi}(t,i)dt+2T\epsilon\nonumber\\
=&\int_{t_{mk}}^{t_{m,k+1}}\sum_{i=1}^Dh(t,i)n_{\pi}(t,i)dt+2T\epsilon.\nonumber
\end{align}
In \eqref{eq:Ergodic} we utilized the ergodicity of the process $\bar\xi_{n}$ on each interval $[\tau_l,\tau_{l+1})$. The convergence can be obtained in the almost sure and $L_{\mathbb P}^1$-sense rather than in probability \cite{Book:Durrett}. Similarly, we can prove
$$\mathop{\lim \inf}_{n\to \infty}\int_0^Th(t,\bar \xi_n(t))dt \ge \int_{t_{mk}}^{t_{m,k+1}}\sum_{i=1}^Dh(t,i)n_{\pi}(t,i)dt-2T\epsilon.
$$
The proof is completed.\qed\\

\noindent{\bf Proof of Lemma \ref{covergence of tilde z}}.
The goal is to prove that for any $\epsilon>0$,
$$ \lim_{n \to \infty}\mathbb{P}\left(\mathop{\sup}\limits_{0\le t\le T}\|\bar \zb_n(t)-\yb(t)\| \ge\epsilon \right)=0.$$
For any $\pb \in \mathbb{R}^d$ and $\rho>0$, we have the martingale
\begin{align*}
{M_t} =\exp \Bigg\{ &\left\langle {{{\bar \zb}_n}\left( t \right) - {\yb }(t),\rho \pb } \right\rangle \\
 &- \int_0^t \sum\limits_{i = 1}^S \Big( n{\mu _i(s)}\frac{{{\lambda _i}({{\bar \zb}_n}(s),{\bar\xi _n}(s))}}{\lambda^{{\pi}}_{i}(\yb(s))}({e^{\left\langle {\rho \pb ,{\ub_i}/n} \right\rangle }} - 1) \\
&-{\mu _i(s)}\left\langle {\rho \pb ,{\ub_i}} \right\rangle  \Big) ds   \Bigg\}\\
=\exp \Bigg\{& \left\langle {{{\bar \zb}_n}\left( t \right) - {\yb }(t),\rho \pb } \right\rangle \\
 &- \int_0^t \sum\limits_{i = 1}^S \Big({\mu _i(s)}\frac{\lambda _i({{\bar \zb}_n}(s),{\bar\xi _n}(s))-\lambda^{{\pi}}_{i}(\yb(s))}{\lambda^{{\pi}}_{i}(\yb(s))}\left\langle {\rho \pb ,{\ub_i}} \right\rangle \\
&+{\mu _i(s)\frac{{{\lambda _i}({{\bar \zb}_n}(s),{\bar\xi _n}(s))}}
{\lambda^{{\pi}}_{i}(\yb(s))}}\big(n({e^{\left\langle {\rho \pb ,{\ub_i}/n} \right\rangle }} - 1)-\left\langle {\rho \pb ,{\ub_i}}
\right\rangle\big)\Big) ds   \Bigg\}.
\end{align*}
Recall the Assumption \ref{ass:main} and $\mu_i(t)$ is piecewise constant and bounded, we can perform similar estimate as in Lemma \ref{lemma1} to obtain
\begin{align}\notag
 &\mathbb{P}\Bigg(\mathop{\sup}\limits_{0\le t\le T}\bigg\|\bar \zb_n(t)-\yb(t)  - \int_0^t \sum\limits_{i = 1}^S {\mu _i(s)}\frac{\lambda _i({{\bar \zb}_n}(s),{\bar\xi _n}(s))-\lambda^{{\pi}}_{i}(\yb(s))}{\lambda^{{\pi}}_{i}(\yb(s))}ds\ub_i\bigg\| \ge\epsilon \Bigg )\\ \label{tilde z and y}
& \le \exp\Big( - n\epsilon c_1 \log(\epsilon c_2 )\Big),
\end{align}
where $c_1$ and $c_2$ are positive constants.
By Lemma \ref{covergence of tilde v}, we have
\begin{eqnarray}\notag
&&\int_0^t {{\lambda _i}({{\bar \zb}_n}(s),{\bar\xi _n}(s))ds}  - \int_0^t \lambda^{{\pi}}_{i}(\yb(s))ds  \\\notag
&=& \left( {\int_0^t {{\lambda _i}({{\bar \zb}_n}(s),{\bar\xi _n}(s))ds}  - \int_0^t {{\lambda _i}(\yb(s),{\bar\xi _n}(s))ds} } \right) \\\notag
&&+ \left( {\int_0^t {\sum^{D}_{j=1} {{\lambda _i}} (\yb(s),j){n_{{\bar\nu _n}}}(s,j)}ds  - \int_0^t \lambda^{{\pi}}_{i}(\yb(s)) ds } \right)\\ \label{numerator}
&\leqslant& K\int_0^t {\|{{\bar \zb}_n}(s) - \yb(s)\|}ds + B_n,
\end{eqnarray}
where
\begin{equation}\label{eq:Fn}
B_{n}=\sup_{t\in[0,T]}\left|\sum^{D}_{j=1} \int_0^t  {{\lambda _i}}(\yb(s),j)\Big(n_{\bar\nu _n}(s,j) - n_{\pi} (s,j)\Big)ds\right|\to 0
\end{equation}
 as $n$ goes to infinity for $t\le T$.

Define $C=dAUT\Lambda$, where
$$A= \mathop{\max}\limits_{t\in [0,T]}\mathop{\max}\limits_{i=1,\ldots,S} \mu_{i}(t)  \quad \text{and}\quad U=\mathop{\max}\limits_{i=1,\cdots, S} \|\ub_i\|.$$
Combining \eqref{tilde z and y}, \eqref{numerator} and \eqref{eq:LambdaLUBound}, we have
\begin{align}\notag
&\mathbb{P}\left(\mathop{\sup}\limits_{0\le t\le T}\Big(\left\|\bar \zb_n(t)-\yb(t)\right\|-CK\int_0^t {\left\|{{\bar \zb}_n}(s) - {\yb }(s)\right\|}ds -C B_{n}\Big)\ge \epsilon\right)\\\notag
\le& \mathbb{P}\left(\mathop{\sup}\limits_{0\le t\le T}\bigg\|\bar \zb_n(t)-\yb(t)  - \int_0^t \sum\limits_{i = 1}^S {\mu _i(t)}\frac{\lambda _i({{\bar \zb}_n}(s),{\bar\xi _n}(s))-\lambda^{{\pi}}_{i}(\yb(s))}{\lambda^{{\pi}}_{i}(\yb(s))}ds\ub_i\bigg\| \ge \epsilon \right )\\ \label{c1 and c2}
\le& \exp\Big( - n\epsilon c_1 \log(\epsilon c_2 )\Big).
\end{align}
From (\ref{c1 and c2}) and Gronwall's inequality, we obtain
\begin{eqnarray*}
&&\mathbb{P}\left( {\mathop {\sup }\limits_{0 \leqslant t \leqslant T} \|{{\bar \zb}_n}(t) - {\yb }(t)\| \geqslant (\epsilon+CB_{n}) {e^{CKT}}} \right)\\
&\leqslant& \mathbb{P}\left( {\mathop {\sup }\limits_{0 \leqslant t \leqslant T} \left( {\|{{\bar \zb}_n}(t) - {\yb }(t)\| - CK\int_0^t {\|{{\bar \zb}_n}(s) - {\yb }(s)\|} ds} \right) \geqslant \epsilon} \right)\\
&\leqslant& \exp\Big( - n\epsilon c_1 \log(\epsilon c_2 )\Big).
\end{eqnarray*}
Combing the condition \eqref{eq:Fn} and the inequality
\begin{eqnarray*}
&&\mathbb{P}\left( {\mathop {\sup }\limits_{0 \leqslant t \leqslant T} \|{{\bar \zb}_n}(t) - {\yb }(t)\| \geqslant 2\epsilon}e^{CKT} \right)\\
&\le&\mathbb{P}\left( {\mathop {\sup }\limits_{0 \leqslant t \leqslant T} \|{{\bar \zb}_n}(t) - {\yb }(t)\| \geqslant (\epsilon+CB_{n}) {e^{CKT}}} \right) +\mathbb{P}\left( B_{n} \geqslant \frac{\epsilon}{C} \right),
\end{eqnarray*}
we finish the proof.
\qed

\section*{Acknowledgement}

T. Li acknowledges the support of NSFC under grants 11171009, 11421101, 91530322 and the National Science Foundation for Excellent Young Scholars (Grant No. 11222114). They thank Weinan E, Yong Liu, and Xiaoguang Li for helpful discussions, and an anonymous referee for careful reading and valuable suggestions to improve the paper.

\bibliographystyle{unsrt}

%\end{CJK*}
\end{document}